\numberwithin{equation}{section}
\theoremstyle{plain}
\newtheorem{theorem}[equation]{Theorem}
\newtheorem{lemma}[equation]{Lemma}
\newtheorem{proposition}[equation]{Proposition}
\newtheorem{corollary}[equation]{Corollary}
\theoremstyle{remark}
\newtheorem{remark}[equation]{Remark}
\Crefname{algocf}{Algorithm}{Algorithms}
\Crefname{remark}{Remark}{Remarks}
\Crefname{conjecture}{Conjecture}{Conjectures}
\theoremstyle{definition}
\newtheorem{definition}[equation]{Definition}
\newtheorem{example}[equation]{Example}
\newcommand{\bP}{\mathbb{P}}
\newcommand{\bR}{\mathbb{R}}
\newcommand{\bZ}{\mathbb{Z}}
\newcommand{\bC}{\mathbb{C}}
\newcommand{\bL}{\mathbb{L}}
\newcommand{\bB}{\mathbb{B}}
\newcommand{\calC}{\mathcal{C}}
\newcommand{\calK}{\mathcal{K}}
\newcommand{\calO}{\mathcal{O}}
\newcommand{\calP}{\mathcal{P}}
\newcommand{\calB}{\mathcal{B}}
\newcommand{\calD}{\mathcal{D}}
\newcommand{\calJ}{\mathcal{J}}
\newcommand{\Aut}{\mathrm{Aut}}
\newcommand{\Bir}{\mathrm{Bir}}
\newcommand{\Stab}{\mathrm{Stab}}
\newcommand{\GL}{\mathrm{GL}}
\newcommand{\Sat}{\mathrm{Sat}}
\newcommand{\im}{\mathrm{Im}}
\newcommand{\rank}{\mathrm{rank}}
\newcommand{\Pic}{\mathrm{Pic}}
\newcommand{\NS}{\mathrm{NS}}
\newcommand{\git}{/\kern-0.2em/}
\newcommand{\II}{\textnormal{II}}
\newcommand{\OG}{\Lambda}
\newcommand{\markman}{\mathcal{W}^{pex}}
\newcommand{\leech}{\mathbb{L}}
\newcommand{\sat}{\textnormal{Sat}}
\DeclareMathOperator{\divi}{div}
\DeclareMathOperator{\id}{id}
\author{Lisa Marquand and Stevell Muller}
\title[]{Finite groups of symplectic birational transformations of IHS manifolds of \boldmath $OG10$ type} 
\date{\today}
\thanks{The second author was supported by the Deutsche Forschungsgemeinschaft (DFG, German Research Foundation) [Gef\"ordert durch die Deutsche Forschungsgemeinschaft (DFG) –
Projektnummer 286237555 – TRR 195].}
\begin{document}
{\theoremstyle{plain}
\newtheorem*{theorem*}{Theorem}
}
	\bibliographystyle{halpha}

\begin{abstract}
    We classify finite groups that act faithfully by symplectic birational transformations on an irreducible holomorphic symplectic (IHS) manifold of $OG10$ type. In particular, if $X$ is an IHS manifold of $OG10$ type and $G$ a finite subgroup of symplectic birational transformations of $X$, then the action of $G$ on $H^2(X,\bZ)$ is conjugate to a subgroup of one of 375 groups of isometries. We prove a criterion for when such a group is determined by a group of automorphisms acting on a cubic fourfold, and apply it to our classification. Our proof is computer aided and our results are available in a Zenodo dataset \cite{database}.
\end{abstract}
 \maketitle
\section{Introduction}
In recent years, there has been much progress in the study of symplectic automorphisms of irreducible holomorphic symplectic (IHS) manifolds.
The model for this study is that of $K3$ surfaces; Mukai classifies symplectic automorphisms in his celebrated paper \cite{MR958597}.
This was later reproved by Xiao \cite{xiao}, and a more streamlined approach was reached by Kond\=o \cite{MR1620514} using automorphisms of the Niemeier lattices.
A full classification has been later provided by Hashimoto \cite{hashimoto} in terms of cohomological action. 
The techniques of Kond\=o were adapted to the classification of symplectic automorphisms of cubic fourfolds by Laza and Zheng in \cite{laza2019automorphisms}; here an automorphism is symplectic if it induces a symplectic automorphism on the Fano variety of lines. 
In a similar vein, Mongardi obtained a classification of prime order symplectic automorphisms of IHS manifolds of $K3^{[n]}$ type \cite{MR3102529, MR3473636}, with similar results obtained by \cite{huybrechtsderived}. 
H\"ohn and Mason \cite{HMsympk32} completed the full classification of finite regular symplectic automorphisms for IHS manifolds of $K3^{[2]}$ type. In the case of O'Grady's exceptional examples, the authors in \cite{grossi2020finite,giovenzana2023symplectic} show that for IHS manifolds of deformation type $OG6$ and $OG10$, any symplectic automorphism of finite order acts trivially on the second integral cohomology. In particular, IHS manifolds of $OG10$ type do not admit any nontrivial symplectic automorphism of finite order.

For the two previous deformation types, the situation becomes richer if instead one considers symplectic \emph{birational} transformations. The isometry classes of the invariant and coinvariant sublattices associated to symplectic birational transformations of finite order for IHS manifolds of $OG6$ type are classified in \cite{grossi2020finite}. A lattice classification of symplectic birational involutions for IHS manifolds of $OG10$ type was reached by the authors of the present paper in \cite{marqOG10}. Among them, three of the possible cases can be geometrically realised by involutions of a cubic fourfold via the construction of \cite{LSV,sac2021birational} (see also \cite{marquand2022cubic}). In this paper, we study finite groups of symplectic birational transformations of IHS manifolds of $OG10$ type, and obtain a classification of (saturated) groups $G$.

For an IHS manifold $X$, we denote by $\Bir_s(X)$ the group of symplectic birational transformations of $X$. Let $(X,\eta, G)$ be a marked IHS manifold of $OG10$ type, and $G\leq \Bir_s(X)$ a finite subgroup. 
 For such a triple, we obtain induced isometries of the second integral cohomology, i.e. $\eta_*(G)\leq O(\Lambda)$, where $\eta:H^2(X,\bZ)\cong \Lambda := U^3\oplus E_8^2\oplus A_2$ is an isometry. 
We can now state our main classification result.

\begin{theorem*}[\Cref{thm: main thm for OG10 intro}]\label{intro: thm1}
    Let $(X, \eta, G)$ be a triple consisting of a marked IHS manifold $(X, \eta)$ of $OG10$ type and a finite group $G\leq\Bir_s(X)$. Then, up to conjugacy, $\eta_\ast(G)\leq O(\Lambda)$ is contained in one of the 375 saturated groups of the dataset \cite{database}.
\end{theorem*}

Moreover, Table 4 in the ancillary files gives information about each such conjugacy class. 
The strategy to obtain the classification in \Cref{thm: main thm for OG10 intro} follows three main steps. 

Firstly, we use the Global Torelli theorem to reduce the classification of groups of symplectic birational transformations of IHS manifolds of $OG10$ type to classifying subgroups $H\leq O^+(\Lambda)$ satisfying certain lattice-theoretic conditions. We recall that $O^+(\Lambda)\trianglelefteq O(\Lambda)$ denotes the normal subgroup consisting of orientation-preserving isometries. We classify such groups $H\leq O^+(\Lambda)$ by distinguishing their induced action on the discriminant group $D_\Lambda:=\Lambda^\vee/\Lambda.$ 
Note that $O(D_\Lambda)= \{\pm \id_{D_\Lambda}\}$, and let us denote by $O^\#(\Lambda)$ the kernel of the natural morphism $O(\Lambda)\to O(D_\Lambda)$. We let moreover $O^{+, \#}(\Lambda) := O^+(\Lambda)\cap O^\#(\Lambda)$.

The second step of our proof of \Cref{thm: main thm for OG10 intro} consists of classifying finite subgroups $H\leq O^{+,\#}(\Lambda)$; we say such a group $H$ is \textbf{stable}\footnote{We borrow this definition from \cite{ghs13}; the naming ``stable" comes from the fact that acting trivially on the discriminant group is stable by extending with the identity along primitive embeddings of lattices (compare with \Cref{lem: stably sat} for instance)}. 
A classical approach to classify stable groups is to realise $H$ as an isometry group of the Leech lattice $\bL$, by embedding the coinvariant sublattice primitively into $\bL$ and extending the group of isometries. 
Unfortunately, this is no longer a sufficient strategy in the case of $OG10$ type manifolds $X$, due to the fact that $ H^2(X,\bZ)\cong \Lambda$ is not unimodular. 
Indeed, there exist stable groups $H\leq O^{+, \#}(\Lambda)$ whose coinvariant sublattice does not primitively embed into $\bL$ (see \Cref{counterexample}).
Instead, we realise such stable groups $H$ as subgroups of isometries of the Borcherds lattice $\bB:=U\oplus \bL$ to obtain our classification.
We obtain the following result:
\begin{theorem*}[\Cref{theo: sec 3 main}]
    Let $(X, \eta, G)$ be a triple consisting of a marked IHS manifold $(X, \eta)$ of $OG10$ type and a finite group $G\leq\Bir_s(X)$ such that $H:=\eta_\ast(G)\leq O^{+, \#}(\Lambda)$ is stable. Then, up to conjugacy, $H\leq O^{+,\#}(\Lambda)$ is contained in one of the 192 stable groups of the dataset \cite{database}.
\end{theorem*}

Lastly, the final step in the proof of \Cref{thm: main thm for OG10 intro} is to classify the possible groups $H\leq O^+(\Lambda)$ with nontrivial action on $D_\Lambda$. For such a group, there is a short exact sequence
\[1\rightarrow H^\#\rightarrow H\rightarrow O(D_\Lambda)\cong \mu_2\rightarrow 1,\] where $H^\#$ is stable. Our approach to classify such possible groups $H$ is to start with one of the 192 stable groups $H^\#$ mentioned in \Cref{theo: sec 3 main}, and classify all possible $\mu_2$-extensions, along with their action on $\Lambda$. We then apply the Global Torelli theorem to extract those extensions that appear as groups of symplectic birational transformations of an IHS manifold of $OG10$ type.

Our strategy to perform this extension classification is adapted from the original techniques of Brandhorst and Hashimoto \cite{brandhorst_hashimoto}, who classified maximal finite groups of automorphisms of $K3$ surfaces.
In particular, such groups contain both symplectic and nonsymplectic automorphisms --- we call such a group \textbf{mixed}.  
These techniques have been revisited by the independent works \cite{comparin2023irreducible,wawak} in the case of maximal mixed actions on IHS manifolds of $K3^{[2]}$ type.
Through a series of work \cite{brandhorst_hashimoto, brandhorst_hofmann_manthe, brandhorst_veniani}, this classification approach culminated in the recent major algorithmic progress of Brandhorst and Hofmann \cite{bh22} where the authors completed a long-standing full classification work for finite mixed automorphisms on K3 surfaces.

The algorithms developed in \cite{bh22} have been successfully implemented by the second author of the present paper on Oscar \cite[QuadFormAndIsom]{oscar-book}, together with Nikulin's theory on primitive extensions \cite{nikulin}. 
This package allows one to consider the classification of finite groups of symmetries of higher dimensional IHS manifolds. We develop an algorithmic procedure to complete the extension classification and we obtain the classification of finite subgroups of symplectic birational transformations on IHS manifolds of $OG10$ type in \Cref{thm: main thm for OG10 intro}.

 A natural next step is to provide geometrical realisations for the groups $H$ occuring in the classification developed in \Cref{thm: main thm for OG10 intro}. 
 For some of the groups with trivial action on $D_\Lambda,$ this can be achieved by applying O'Grady's original construction \cite{MR1703077} to a $K3$ surface with a specific group of automorphisms. 
 In \cite{marqOG10}, we provided geometric realisations for two of the symplectic birational involutions obtained as extensions of the trivial group $H^\#=1$,  via antisymplectic involutions of a cubic fourfold $V\subset \bP^5$. 
 We briefly recall the construction.
 An antisymplectic involution $\phi\in \Aut(V)$ induces an antisymplectic birational involution $\phi$ on a compactified intermediate Jacobian $X_V$, which is an IHS manifold of $OG10$ type as in \cite{LSV,sac2021birational}. 
 The manifold $X_V$ has a Lagrangian fibration $\pi\colon X_V\to \mathbb{P}^5$ which admits a section, and it is equipped with another antisymplectic birational involution $\tau\in \Bir(X_V)$ acting by $-1$ on the smooth fibers. 
 The composition $\tau\circ\phi$ is a symplectic birational involution of $X_V$.

  Let $V$ be a cubic fourfold, and $\Aut_s(V)\leq \Aut(V)$ the subgroup of symplectic automorphisms. A group of symplectic automorphisms $G_s\leq \Aut_s(V)$ will induce a group of symplectic birational transformations $G_s\leq \Bir_s(X_V)$, acting trivially on the discriminant group. We show that if $V$ exhibits in addition an antisymplectic automorphism, i.e. a nonsymplectic automorphism with symplectic square, then one can induce larger groups of symplectic birational transformations $G\leq \Bir_s(X_V)$ that are $\mu_2$-extensions of $G_s$. More precisely, we show:
 \begin{theorem*}[\Cref{thm: geo realise LSV}]\label{intro: thm3}
    Let $X$ be an IHS manifold of $OG10$ type and let $G\leq \Bir_s(X)$ be a finite group of symplectic birational transformations. Suppose that $\Lambda^G\cong U\oplus \Gamma$ holds, for some lattice $\Gamma$. Then there exists some smooth cubic fourfold $V$ and an embedding $j\colon G\hookrightarrow \Aut(V)$ such that:
    \begin{enumerate}
        \item either $G$ acts trivially on the discriminant group $D_\Lambda$, and $j(G)\leq \Aut_s(V)$;
        \item or $G=\langle G_s, \phi\rangle$ with $j(G_s)\leq \Aut_s(V)$, and $j(\phi)\in\Aut(V)\setminus\Aut_s(V)$ is antisymplectic.
    \end{enumerate}
    The pair $(G_s, \Lambda_{G_s})$ occurs in the classification of \cite{laza2019automorphisms}.

    Conversely, for any smooth cubic fourfold $V$, any LSV manifold $X_V$ associated to $V$ and any finite subgroup $G\leq \Aut(V)$ so that $[G:G_s]\leq 2$ holds, there is an embedding of $G$ into the group $\Bir_s(X_V)$ of symplectic birational transformations on $X_V$.
\end{theorem*}

Among the 375 groups mentioned in \Cref{thm: main thm for OG10 intro}, we have that 77 of them can be realised via LSV manifolds associated to cubic fourfolds in the sense of \Cref{thm: geo realise LSV}. We also find that 34 of the pairs can be realised via the twisted LSV construction from a cubic fourfold as in \cite{twistedLSV}. 
\begin{remark}
    Note that even though we know the action of $G\leq \Aut(V)$ on $H^4(X,\bZ)_{prim}$, we cannot (always) determine the action on $H^2(X_V,\bZ)$. The manifold $X_V$ admits a Lagrangian fibration $\pi:X_V\rightarrow \bP^5$, and has a distinguished divisor $\Theta$ \cite{LSV,sac2021birational}. There is a \textbf{rational} Hodge isometry between $\langle \Theta, \pi^*\calO_{\bP^5}(1)\rangle^\perp\subset H^2(X_V,\bZ)$ and $H^4(V,\bZ)_{prim}(-1)$. The action of the induced group $G\leq \Bir_s(X_V)$ on $H^2(X_V,\bZ)$ could be determined if this isometry can be upgraded to an integral Hodge isometry. The conjectural action of the 77 groups above are indicated in Table 4 of the ancillary files. 
\end{remark}

\subsection*{Outline of the paper} In \Cref{sec: prelims}, we recall the definitions and known results surrounding lattices, isometries and primitive extensions. We also collect results regarding isometries of the Borcherds lattice. In \Cref{sec: sympl bir}, we recall the Global Torelli theorem for IHS manifolds of $OG10$ type, and reduce the classification of finite groups of symplectic birational transformations to classifying certain isometry groups of the associated lattice. 
In \Cref{sec: stable sympl iso}, we classify finite groups of symplectic birational transformations acting trivially on $D_\Lambda$, proving \Cref{theo: sec 3 main}. In order to do so, we determine the isometry class of the coinvariant sublattices for some finite subgroups of isometries of $\bB$: those are either primitively embedded into the Leech lattice, or they are displayed in \Cref{appendix: exceptional stable,App: exceptional hearts}. The results of \Cref{theo: sec 3 main} are presented in \Cref{app: table of hearts}, \Cref{tab: lovely table 45}. 
In \Cref{sec: from stable sympl to sympl}, we explain our extension approach in order to classify finite groups of symplectic birational transformations acting nontrivially on $D_\Lambda$. 
We implement the algorithms from \Cref{sec: algorithms}, and we prove \Cref{thm: main thm for OG10 intro}. 
Finally, in \Cref{sec: geo interp}, we provide geometric realisations for some of the groups in the classification via known constructions from either a $K3$ surface, or a cubic fourfold. 
In particular, we prove \Cref{thm: geo realise LSV} and apply it to our classification. Our work is computer aided: all the data supporting our proofs are contained in an external Zenodo dataset \cite{database}, as well as several notebooks explaining our computations. 

\subsection*{Acknowledgements} The authors would like to thank Simon Brandhorst for his helpful comments and for his suggestion to use the Borcherds lattice, as in \Cref{subsec: Borcherds}. Specifically, we thank him for the discussions about the proofs of \Cref{propo: first characterisation finite subgroups,theorem exceptional lattices}. The authors would like to thank Simone Billi, Annalisa Grossi, Ljudmila Kamenova, Radu Laza and Giovanni Mongardi for helpful discussions and their comments. The authors would like to thank the OSCAR team for their support regarding the programming aspect of this project. Finally, the authors would like to thank the referee, whose comments greatly improved the manuscript.

\section{Preliminaries}\label{sec: prelims}
In this section, we recall some preliminary results. 
In \Cref{subsec: notation} we recall relevant notation and definitions regarding lattices and their isometries. 
In \Cref{subsec: equivariant extension} we recall Nikulin's theory of primitive  embeddings, highlighting the equivariant analogue. 
In \Cref{subsec: Borcherds} we introduce the Borcherds lattice, and recall the structure of its group of isometries.

\subsection{Lattices and isometries}\label{subsec: notation} Let $L$ be a lattice, which is a finitely generated free $\bZ$-module equipped with a nondegenerate, integer valued symmetric bilinear form. 
For two vectors $u,v\in L$, we denote by $u.v\in \bZ$ the image by the bilinear form, and we let $v^2 := v.v$. 
We assume that $L$ is \textbf{even} unless stated otherwise, i.e. $v^2\in 2\bZ$ for all $v\in L$. Finally, we denote by $O^+(L)\leq O(L)$ the group of orientiation-preserving isometries of $L$.

In what follows, all $ADE$ root lattices are assumed to be \textbf{negative definite}. The Leech lattice $\leech$ is the unique (up to isometry) negative definite even unimodular lattice of rank 24 that contains no $(-2)$-vectors. We denote by $U$ the hyperbolic plane lattice, which is the unique even unimodular lattice of rank 2, up to isometry.

We denote by $D_L:=L^\vee/L$ the discriminant group of $L$. For any lattice isometry $f\colon L\xrightarrow{\cong} L'$, we denote by $D_f\colon D_L\xrightarrow{\cong} D_{L'}$ the induced isometry.

\begin{definition}\label{defn: stable}
    Let $L$ be a lattice and let $G \leq O(L)$ be a group of isometries.  We call the \textbf{discriminant representation} of $G$ the morphism $G\to O(D_L)$. We denote by $\overline{G}$ its image and by $G^\#$ its kernel. Any element in $G^\#$ is said to be \textbf{stable} and we call $G$ \textbf{stable} if $G = G^\#$.
\end{definition}

Let $G\leq O(L)$ be a group of isometries. 
We denote by $L^G:=\{v\in L\mid g(v)=v, \,\, \forall g\in G\}$ and by $L_G:=(L^G)_L^\perp$ the associated invariant and coinvariant sublattices, respectively. 

\begin{definition}\label{defn: saturation}
    Let $L$ be a lattice, and let $H\leq G\leq O(L)$ be a chain of subgroups.
    We call the \textbf{saturation of $H$ in $G$}, denoted $\sat_G(H)$, the largest subgroup $H\leq \sat_G(H)\leq G$ such that $L^H = L^{\sat_G(H)}$. 
    We say that $H$ is \textbf{saturated} in $G$ if $\sat_G(H) = H$.
    
\end{definition} 

\begin{lemma}\label{sat implies stab sat}
    Let $L$ be a lattice and let $H\leq G\leq  O(L)$ be a chain of subgroups.
    If $H$ is saturated in $G$, then $H^\#$ is saturated in $G^\#$.
\end{lemma}

\begin{proof}
    Let $g\in G^\#$ such that $g$ acts trivially on $L^{H^\#}$. Since $L^{H}\subseteq L^{H^\#}$, we have that $g$ is the identity on $L^H$ and thus $g\in \Sat_G(H) = H$. Hence $g\in G^\#\cap H = H^\#$, and $\Sat_{G^\#}(H^\#) = H^\#$.
   
\end{proof}

\begin{definition}\label{definition lattice with isometry}
    We call a \textbf{lattice with isometry} any pair $(L, f)$ consisting of a lattice $L$ and an isometry $f\in O(L)$. Two such pairs $(L_1, f_1)$ and $(L_2, f_2)$ are \textbf{isomorphic} if there exists an isometry $\psi\colon L_1\xrightarrow{\cong} L_2$ such that $f_2 = \psi f_1\psi^{-1}$.

\end{definition}

\begin{definition}
    Let $L$ be a lattice and let $v\in L$ be a vector. We define the \textbf{divisibility} of $v$ in $L$, which we denote $\divi_L(v)$, the positive generator of the ideal $v.L$.
\end{definition}

\subsection{Primitive extensions}\label{subsec: equivariant extension}

For nondegenerate lattices, all morphisms are injective --- we therefore talk about \textbf{embeddings}.

\begin{definition}
    Let $L$ be a lattice.
    \begin{enumerate}
        \item Any sublattice $S\subseteq L$ is called \textbf{primitive} if the quotient $L/S$ is torsion-free.
        \item Any embedding $i\colon S\hookrightarrow L$ is called \textbf{primitive} if $i(S)\subseteq L$ is primitive.
        \item Two primitive sublattices $S_1, S_2\subseteq L$ are said \textbf{isomorphic} if there exists an isometry $\psi\in O(L)$ such that $\psi(S_1) = S_2$.
    \end{enumerate}
\end{definition}

Given an even lattice $S$, the proof of \cite[Proposition 1.15.1]{nikulin} describes a procedure to classify, up to isomorphism, primitive sublattices of an even lattice, with given signature and discriminant group, that are isometric to $S$.
The proof makes use of the notion of primitive extensions, which we recall now.

\begin{definition}
    For a lattice $L$ and a sublattice $N\subseteq L$, we say that $L$ is an \textbf{overlattice} of $N$ if $L$ and $N$ have the same rank, as $\bZ$-modules.
\end{definition}

Let $S$ and $T$ be even lattices and let $L$ be an overlattice of $S\oplus T$. Such an overlattice $S\oplus T\subseteq L$ is called a \textbf{primitive extension} if both composite embeddings $S\hookrightarrow S\oplus T\hookrightarrow L$ and $T\hookrightarrow S\oplus T\hookrightarrow L$ are primitive.

\begin{definition}
    We define a \textbf{glue map} between $S$ and $T$ to be an isomorphism of finite abelian groups
    \[ D_S\geq H_S\xrightarrow{\gamma} H_T\leq D_T\]
    such that $x^2+\gamma(x)^2\in 2\mathbb{Z}$ for all $x\in H_S$. We moreover call $H_S$ and $H_T$ the \textbf{glue domains} of $\gamma$.
\end{definition}

\begin{proposition}[{{{\cite[Proposition 1.4.1]{nikulin}}}}]
    Glue maps $D_S\geq H_S\xrightarrow{\gamma} H_T\leq D_T$ correspond bijectively to even primitive extensions $S\oplus T\subseteq L_\gamma$.
\end{proposition}

\begin{definition}
    We call $L_\gamma$ the \textbf{overlattice relative to $\gamma$}. In this situation, we also say that $H_S\leq D_S$ and $H_T\leq D_T$ are the glue domains of the primitive embeddings $S\hookrightarrow L$ and $T\hookrightarrow L$ respectively. 
\end{definition}

Let $(S, s)$ and $(T, t)$ be two even lattices with isometry, where $s\in O(S)$ and $t\in O(T)$. 
Let $D_S\geq H_S\xrightarrow{\gamma} H_T\leq D_T$ be a glue map. 
The glue map $\gamma$ is called \textbf{$(s, t)$-equivariant} if $H_S$ and $H_T$ are respectively $D_s$-stable and $D_t$-stable, and if it satisfies the \textbf{equivariant gluing condition:} 
\begin{equation}\label{eq:egc}\tag{EGC}
    \gamma\circ (D_{s})_{\mid H_S} = (D_{t})_{\mid H_T}\circ \gamma.
\end{equation}

\begin{proposition}[{{{\cite[Corollary 1.5.2]{nikulin}}}}]
The map $\gamma$ is $(s, t)$-equivariant if and only if $s\oplus t$ extends along the primitive extension $S\oplus T\subseteq L_{\gamma}$ to an isometry $f_\gamma$ of $L_{\gamma}.$
\end{proposition}

We  call $(L_\gamma, f_\gamma)$ an \textbf{equivariant primitive extension} of $(S, s)$ and $(T, t)$. 

\begin{definition}
    Let $(S_1, s_1)\oplus (T_1, t_1)\subseteq (L_1, f_1)$ and $(S_2, s_2)\oplus (T_2, t_2)\subseteq (L_2, f_2)$ be two equivariant primitive extensions. They are said to be \textbf{isomorphic} if there exists an isomorphism $\psi\colon (L_1, f_1)\to (L_2, f_2)$ which restricts to isomorphisms $\psi_S\colon (S_1,s_2)\to (S_2, s_2)$ and $\psi_T\colon (T_1, t_1)\to (T_2, t_2)$.
\end{definition}

\subsection{Borcherds lattice}\label{subsec: Borcherds}
A common technique to classify finite groups of symplectic birational transformations of IHS manifolds is to relate them to isometry groups of the Leech lattice $\leech$. Unfortunately, this lattice is too small for our purposes, and so we will require some results about isometries of the \textbf{Borcherds lattice:}
$$\bB:=U\oplus \leech.$$

\begin{remark}
    In \cite{bm24}, the authors defines the notion of Borcherds lattices, which are a particular class of hyperbolic even lattices with infinitely many simple $(-2)$-roots. They show that the Borcherds lattices of largest rank are isometric to $U\oplus \leech$, which motivates our definition. Note that our definition differs from the one given by Laza and Zheng in \cite{laza2019automorphisms}, where they define $U^2\oplus \bL$ as Borcherds lattice, which is not a Borcherds lattice in the sense of Brandhorst and Mezzedimi.
\end{remark}

The lattice $\mathbb{B}$ is even unimodular, of signature $(1,25)$, and it is unique in its genus.
Let us fix a basis $\{e,s\}$ for $U\subset \bB$ with $e^2=0, s^2=-2$ and $e. s=1$.

The group of isometries of $\mathbb{B}$ is known and it has been studied for instance by Conway in \cite[Chapter 27]{splg}. For the reader's convenience, we recall Conway's results, following an exposition of Brandhorst and Mezzedimi \cite{bm24}.

We denote by $\mathcal{P}$ one of the connected components of \[\left\{ x\in \bB\otimes \mathbb{R}\,\mid\, x^2 > 0\right\};\] we call it the \textbf{positive cone} of $\bB$. Note that $O(\bB) = \{\pm \id\}\times O^+(\bB)$ and $-\id$ does not preserve $\mathcal{P}$ --- we can see that the group $O^+(\bB)$ as the stabiliser of $\mathcal{P}$ in $O(\bB)$. 

Let us denote by $\Delta := \{r\in \bB\;\mid \; r^2=-2\}$ the set of \textbf{roots} of $\bB$
and denote by $W(\bB)\leq O^+(\bB)$ the subgroup generated by the reflections in the roots $r\in \Delta$. The group $W(\bB)$ is the so-called \textbf{Weyl group} of $\bB$, and it acts simply transitively on the set of connected components, or \textbf{chambers}, of
\[\Gamma := \mathcal{P}\setminus \bigcup_{r\in\Delta}r^\perp.\]
For any chamber $D\subseteq \Gamma$, we let $P_D := \{v\in\bB\;\mid\; v.x>0,\,\forall x\in \overline{D}\}$ and moreover, we define
\[ \Delta_D:= \{ r\in P_D\cap \Delta\;\mid\; r-r'\notin P_D\cap \Delta,\, \forall r'\in P_D\cap\Delta\}\]
the set of \textbf{simple roots of $D$} \cite[\S2.6]{bm24}.
\begin{lemma}
    There exists a chamber $D_0\subset \Gamma$ such that $e\in \overline{D_0}\cap \bB$ and the set $\{|e.r|\;\mid\; r\in \Delta_{D_0}\}$ is bounded. Moreover, the vector $e$ is the unique isotropic vector in $\overline{D_0}\cap \bB$ satisfying this property, and $e.r=1$ for all $r\in\Delta_{D_0}$
\end{lemma}
\begin{proof}
    Existence of an isotropic element with the required properties follows from \cite[Chapter 27]{splg}. Uniqueness follows from \cite[Theorem 3.7, Remark 3.8]{bm24}.
\end{proof}

For the rest of the paper, we refer to $e$ and $D_0$ as \emph{Conway's vector} and \emph{Conway's chamber}, respectively. We moreover denote by $\calD := \overline{D_0}$ the closure of $D_0$ in $\calP$, and we let $\Aut(\mathcal{D})$ be the subgroup of isometries of $O^+(\bB)$ preserving $\mathcal{D}$. Since $\mathcal{D}$ is the closure of a fundamental domain for the action of $W(\bB)$ on $\calP$, we can identify $\Aut(\mathcal{D})$ with $O^+(\bB)/W(\bB)$ \cite[\S2.6]{bm24}.

\begin{lemma}[{{{\cite[Theorem 3.7]{bm24}}}}]\label{first properties autD}
    The group $\Aut(\calD)$ is infinite and any element $h\in \Aut(\calD)$ fixes $e$.
\end{lemma}
We would like to study finite subgroups of $\Aut(\calD)$ in order to determine their coinvariant sublattices. For this, we will need to understand the structure of $\Aut(\mathcal{D})$.

\begin{definition}[Eichler--Siegel transformation]\label{eichlersiegel}
    For any $\lambda\in \bL$, we define \[\psi_\lambda\colon \bB\to \bB,\; x\mapsto x+ (x.\lambda)e - (x.e)\lambda -\frac{1}{2}(x.e)\lambda^2e.\]
\end{definition}
Note that $\psi_{\lambda}\in \Aut(\calD)$ by the proofs of \cite[Proposition 3.2, Theorem 4.7]{bm24}.
Further, the assignment
\[\psi\colon \bL\to \Aut(\mathcal{D}),\; \lambda\mapsto \psi_\lambda\]
is an injective group homomorphism (where we view $\bL$ as a torsion free abelian group of finite rank under addition). By the definition of Conway's vector $e$, which is isotropic, there is an exact sequence of lattices
\[0\to \bZ e\to e^\perp\to \bL\to 0\]
inducing an isometry $\kappa\colon e^\perp/\bZ e\xrightarrow{\cong}\bL$.

\begin{lemma}[{{{\cite[Chapter 27]{splg}}}}]\label{iso conway}
    We have that $\Aut(\calD)=\bL\rtimes O(\bL)$. 
\end{lemma}
\begin{proof}
    Since any isometry in $\Aut(\mathcal{D})$ fixes $e$ (\Cref{first properties autD}), the isometry $\kappa$ defines an orthogonal representation
\begin{equation}\label{defin pi}
    \pi\colon \Aut(\mathcal{D})\to O(\bL)
\end{equation}
which admits a section $\phi:O(\bL)\rightarrow \Aut(\calD)$, given by extending an isometry of $\bL$ to one of $\bB$ acting as the identity on $\bZ e+\bZ s\cong U.$
    According to \cite[Proposition 3.2]{bm24}, the following sequence is exact
\[ 0\to \bL\xrightarrow{\psi}\Aut(\mathcal{D})\xrightarrow{\pi} O(\bL)\to 1.\]  Since all nontrivial elements of $\bL$ have infinite order and $O(\bL)$ is of finite order, we have that $\bL\cap O(\bL)$ is trivial, as a subgroup of $\Aut(\mathcal{D})$, giving the claim.
\end{proof}

Hence, any element $h\in\Aut(\mathcal{D})$ can be uniquely written in the form $\psi_\lambda\circ \phi(g)$ for some $\lambda\in \bL$ and some $g\in O(\bL)$: we write
\[ h = (\lambda, g).\]
We record some properties of elements $h\in \Aut(\calD)$ for future use. The proof follows by direct calculation.
\begin{proposition}\label{propo: first comput}
    Let $\lambda\in \bL$ and let $g\in O(\bL)$. The following hold:
    \begin{enumerate}
        \item $\phi(g)\circ \psi_\lambda = \psi_{g(\lambda)}\circ \phi(g)$;
        \item $\psi^{-1}_\lambda = \psi_{-\lambda}$;
        \item $h := (\lambda, g)$ is of finite order if and only if $\lambda\in \bL_g$.
    \end{enumerate}
\end{proposition}

\begin{remark}\label{rem orders}
    A consequence of \Cref{propo: first comput} (3) is that for any $h := (\lambda, g)\in \Aut(\mathcal{D})$ of finite order, then the order of $h$ is the same as the order of $g$.
\end{remark}

\begin{theorem}\label{propo: first characterisation finite subgroups}
    Let $H\leq \Aut(\mathcal{D})$ be a subgroup, and let $G := \pi(H)\leq O(\bL)$ (\Cref{defin pi}). Then $H$ is of finite order if and only if there exists $n \in \bZ$ positive and $v\in \bL$ such that for all $h = (\lambda, g)\in H$, 
    \[ g(v) - v = n\lambda.\]
    Moreover, if $H$ is of finite order then $H\cong G$, and $n$ can be chosen to be $\#(H\cdot s)$.
\end{theorem}

\begin{proof}
    First remark that if $H$ is of finite order, then $H\cap \bL$ is the trivial subgroup of $\Aut(\mathcal{D})$. In particular, $\pi$ restricts to an isomorphism between $H$ and $G := \pi(H)$. Note moreover that for any $h = H\leq \Aut(\calD)$, we have $h(e) = e$ (\Cref{first properties autD})

    Suppose that $H$ is of finite order, and denote by $n:= \#(H\cdot s)$ the length of the orbit of $s$ under $H$. Since $H$ is finite, we have that $w := \sum_{r\in H\cdot s}r$ is fixed by $H$, and moreover $e.w = n(e.s) = n$. This implies that there exists $m\in \bZ$ and $v\in \bL$ such that
    \[w = me+ns+v.\]
    Since $H$ fixes $e$ and $w$, we have that $H$ fixes $ns+v$.
   In particular, for all $h = (\lambda, g)\in H$, we have
    \begin{equation}\label{eq: fix vector}
        0 = h(ns+v) - (ns+v) 
    = (g(v)-v-n\lambda) + \left(\lambda.g(v)-n\frac{\lambda^2}{2}\right)e \in \bL\oplus \bZ e.
    \end{equation} 
    This implies that $g(v)-v = n\lambda$, and since this holds for any $h = (\lambda, g)\in H$, we can conclude.

    Conversely, suppose that such $n>0$ and $v\in \bL$ exist, and let $h = (\lambda, g)\in H$ be arbitrary. Note that we have that
    \[n\lambda.(g(v)+v) = (g(v)-v).(g(v)+v) = 0.\]
    In particular, since $g(v)-v=n\lambda$, we observe that
    \[2g(v).\lambda = (g(v)-v+g(v)+v).\lambda = (g(v)-v).\lambda = n\lambda^2.\]
    According to \Cref{eq: fix vector}, we obtain that $ns+v\in \bB^H$. Since $e\in \bB^H$ too, one can find $m\in\bZ_{\geq0}$ large enough such that $me+ns+v\in \bB^H$ has positive norm. Since the lattice $\bB$ is hyperbolic, we deduce that $\bB_H$ is negative definite and the group $H$ acting faithfully on such a lattice must be finite. 
\end{proof}

\begin{remark}\label{rmk enough take order of group}
    Let $H\leq \Aut(\calD)$ be of finite order. Then the pair $(n, v)\in \bZ_{>0}\times \bL$ as in the statement of \Cref{propo: first characterisation finite subgroups} is not unique: in fact, one can rescale simultaneously $n$ and $v$ by any nonzero integer, and $v$ can be replaced by any element of $v+\Lambda^G$. In particular, we can always assume that $n=\#H$.
\end{remark}

\begin{remark}
    Let $H\leq \Aut(\calD)$ be finite, and let $G := \pi(H)\leq O(\bL)$ where $\pi\colon \Aut(\calD)\to O(\bL)$ is the representation defined by the isometry $\kappa\colon e^\perp/\bZ e\xrightarrow{\cong} \bL$. Since $H$ fixes $e$, the lattice $\bB_H\subseteq e^\perp$ and it does not contain $\bZ e$: we obtain therefore that $\bB_H$ embeds into $\bL_G$, and the two lattices have the same rank. However, this embedding is not necessarily primitive and, $\bB_H$ and $\bL_G$ are not always isometric.
\end{remark}

\begin{proposition}\label{most examples are Leech}
    Let $H\leq \Aut(\mathcal{D})$ be a finite subgroup and let $G := \pi(H)\cong H$. Then $\bB_H$ and $\bL_G$ are isometric if and only if there exists $v\in \bL$ such that $g(v)-v=\lambda$ for all $(\lambda, g)\in H$. If the previous does not hold, then there exists $n>1$ such that
    \[\det(D_{\bB_H}) = n^2\det(D_{\bL_G})\]
    holds.
\end{proposition}

\begin{proof}
   According to the proof of \Cref{propo: first characterisation finite subgroups}, we know that there exists $n>0$ and $v\in \bL$ such that
   \[ \bB^H = (\bZ e\oplus \bL^G)+\bZ(ns+v)\]
   and $g(v)-v = n\lambda$ for all $(\lambda, g)\in H$. 
   We define $H_v := \psi_v^{-1}\phi(G)\psi_v$: by definition of $\phi\colon O(\bL)\to \Aut(\calD)$ and the fact that $g(v)-v=n\lambda$ for all $(\lambda, g)\in H$, we see that elements of $H_v$ are of the form $(n\lambda, g)$ where $(\lambda,g)\in H$. We therefore already note that if $n=1$, i.e. $g(v)-v=\lambda$ for all $(\lambda,g)\in H$, then $H = H_v$ is conjugate to $\phi(G)$ in $\Aut(\calD)$ and thus
  \[ \bB_H = \bB_{H_v} \cong \bB_{\phi(G)} = \bL_G.\]
  Furthermore, by direct computations we infer that \(\bB^{H_v} = (\bZ e\oplus \bL^G)+\bZ(s+v).\) It hence follows that
   \[\det(D_{\bB^H}) = n^2\det(D_{\bB^{H_v}})\]
   holds. Since $\bB$ is unimodular, and $\bB_{H_v} \cong \bL_G$, we also obtain that 
    \[\det(D_{\bB_H}) = n^2\det(D_{\bL_G})\]
    holds too. From that, it is clear that if $\bB_H\cong \bL_G$, then $n=1$ and $g(v)-v=\lambda$ for all $(\lambda,g)\in H$.\qedhere

\end{proof}
\section{Symplectic birational transformations of IHS manifolds of \texorpdfstring{$OG10$}{OG10} type}\label{sec: sympl bir}
Our main aim is to classify finite groups of symplectic birational transformations of IHS manifolds of $OG10$ type. We recall the definition in \Cref{subsec: IHS manifolds}, along with the period map for IHS manifolds of  $OG10$ type. In \Cref{subsec: bir transf} we discuss birational transformations of such manifolds. Finally in \Cref{subsec: classification}, we explain our strategy for completing this classification --- namely, we relate the classification of finite groups of symplectic birational transformations to classifying certain isometry groups of an associated lattice.

\subsection{IHS manifolds of \texorpdfstring{$OG10$}{OG10} type}\label{subsec: IHS manifolds}
	An IHS manifold is a simply connected, compact, K\"ahler manifold $X$ such that $H^0(X,\Omega^2_X)$ is generated by a nowhere degenerate holomorphic $2$-form $\sigma_X$.
 In this paper, we are focused on IHS manifolds that are deformation equivalent to O'Grady's 10-dimensional exceptional example \cite{MR1703077}. 
 Such a manifold $X$ is said to be of $OG10$ type. 

 It is well known that $H^2(X,\bZ)$ admits a quadratic form $q_X$, known as the Beauville--Bogomolov--Fujiki form. The form $q_X$ is integral, nondegenerate, and the isometry class of the lattice $(H^2(X, \bZ), q_X)$ is invariant under deformation. By \cite{MR2349768}, for an IHS manifold of $OG10$ type, there is an isometry 
 \[\eta: (H^2(X,\bZ), q_X)\xrightarrow{\cong}\OG := U^3\oplus E_8^2\oplus A_2.\]
 A choice of such an isometry $\eta$ is called a \textbf{marking}, and we say that $(X, \eta)$ is a \textbf{marked IHS manifold}.
 Two such marked IHS manifolds $(X, \eta)$ and $(X', \eta')$ are called \textbf{equivalent} if there exists an isomorphism $f\colon X\xrightarrow{\cong} X'$ such that $\eta' = \eta\circ f^\ast$.

 Marked IHS manifolds of $OG10$ type are classified up to equivalence in a coarse moduli space which we denote $\mathcal{M}_{OG10}$. This space is neither Hausdorff nor connected, and two inseparable points in a same connected component define birational IHS manifolds of $OG10$ type \cite[Theorem 4.3]{huybrechts1997compact}. The \textbf{period map}
 \[\begin{array}{ccccc}
 \mathcal{P}&\colon&  \mathcal{M}_{OG10}&\to&\Omega_{OG10} := \left\{\mathbb{C}\omega\in\mathbb{P}(\OG\otimes\mathbb{C})\;\mid\;\omega^2 = 0, \; \omega. \overline{\omega} > 0\right\}\\
 &&(X, \eta) &\mapsto&[\eta_{\mathbb{C}}(\sigma_X)]
 \end{array}\]
  relates a marked IHS manifold with its Hodge structure, where $\eta_{\mathbb{C}}$ denotes $\eta$ extended over $\mathbb{C}$. We have the following crucial result:
  \begin{theorem}[{{{\cite[Theorem 8.1]{huybrechts1997compact}}}}]\label{lem:surj period map}
      The period map is a local homeomorphism, which is surjective onto $\Omega_{OG10}$ when restricted to any connected component of $\mathcal{M}_{OG10}$.
  \end{theorem}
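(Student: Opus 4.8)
The plan is to follow the classical strategy for the surjectivity of the period map for compact hyperk\"ahler manifolds, here specialized to the $OG10$ lattice $\OG$. The two assertions---that $\mathcal{P}$ is a local homeomorphism and that its restriction to each connected component is surjective onto $\Omega_{OG10}$---rest on quite different inputs, so I would treat them separately.

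For the local homeomorphism statement I would invoke the local Torelli theorem. First one shows that deformations of an IHS manifold are unobstructed, so that $\mathcal{M}_{OG10}$ is smooth of dimension $b_2(X)-2 = \dim\Omega_{OG10}$; this is Bogomolov--Tian--Todorov unobstructedness, which for IHS manifolds follows from the existence of the nowhere-degenerate form $\sigma_X$ trivializing the canonical bundle. Then one computes the differential of $\mathcal{P}$ using the identification of the Kodaira--Spencer space $H^1(X,T_X)$ with $H^1(X,\Omega^1_X)$ given by contraction with $\sigma_X$; infinitesimal Torelli says precisely that this differential is an isomorphism. By the inverse function theorem $\mathcal{P}$ is then a local biholomorphism, in particular a local homeomorphism.

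For surjectivity I would use the twistor-line technique. Every IHS manifold is K\"ahler with trivial canonical bundle, so by Yau's solution of the Calabi conjecture each K\"ahler class carries a Ricci-flat, and hence hyperk\"ahler, metric; this determines a twistor family, a $\mathbb{P}^1$ of complex structures lying over a curve inside the given connected component of $\mathcal{M}_{OG10}$. Its image under $\mathcal{P}$ is a \emph{twistor line}: the smooth conic cut out in $\Omega_{OG10}$ by the complexification of a positive-definite $3$-plane $W\subset \OG\otimes\mathbb{R}$. The crucial structural facts are that $\Omega_{OG10}$ is connected and that any two of its points can be joined by a finite chain of twistor lines. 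Since a twistor line through a period in the image lifts to the corresponding twistor family, the image of $\mathcal{P}$ is stable under passing along twistor lines, and combined with chain-connectedness this forces the image to be all of $\Omega_{OG10}$.

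The main obstacle is the surjectivity step, specifically the verification that from each point of the image there emanate \emph{enough} twistor lines to reach the whole period domain. This is where the positivity built into $\Omega_{OG10}$ and Yau's theorem are essential: one must know that a full-dimensional family of K\"ahler classes---hence of positive $3$-planes $W$ containing the given period plane---is realized, so that the attainable twistor lines span all admissible directions. I would therefore isolate two lemmas: a linear-algebra statement that any two points of $\Omega_{OG10}$ are joined by a finite chain of twistor lines associated to positive $3$-planes of $\OG\otimes\mathbb{R}$, and a geometric statement that every twistor line through a period in the image lifts to a twistor family. Feeding the first into the second, starting from the nonempty image and using that each of its points again admits hyperk\"ahler metrics in a full set of directions, yields that the image is all of $\Omega_{OG10}$; the local homeomorphism property separately guarantees openness of the image, which streamlines the closedness bookkeeping in the non-Hausdorff moduli space $\mathcal{M}_{OG10}$.
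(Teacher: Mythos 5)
This statement is not proved in the paper at all: it is quoted verbatim from Huybrechts \cite[Theorem 8.1]{huybrechts1997compact}, and the authors give no argument of their own. Your outline is a faithful reconstruction of the standard proof in that reference --- local Torelli via Bogomolov--Tian--Todorov unobstructedness for the first assertion, and Yau plus twistor-line chain-connectedness of the period domain for the second, with the genuine delicate point (that only the open cone of K\"ahler classes, not all positive $3$-planes through the period, is directly available at a given point) correctly identified --- so there is nothing to correct against the paper.
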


 \subsection{Global Torelli}\label{subsec: bir transf}
 Let $X$ be an IHS manifold of $OG10$ type. We denote by $\Aut(X) \leq \Bir(X)$ the groups of automorphisms and birational transformations of $X$ respectively. A birational transformation $f\in \Bir(X)$ is well defined in codimension one, and so we obtain a Hodge isometry $f^*:H^2(X,\bZ)\rightarrow H^2(X,\bZ)$. 
\begin{definition}\label{symplectic}
	A birational transformation $f\in \Bir(X)$ is said to be \textbf{symplectic} if the induced action $f^*:H^2(X,\bC)\rightarrow H^2(X,\bC)$ acts trivially on $\sigma_X$. Otherwise, $f$ is said to be \textbf{nonsymplectic}. If $f$ is nonsymplectic with symplectic square, we say that $f$ is \textbf{antisymplectic}.
\end{definition}
Let $(X, \eta)$ be a marked IHS manifold of $OG10$ type. The marking $\eta$ gives rise to an orthogonal representation
\[ \eta_{\ast}\colon \text{Bir}(X)\to O(\OG), \,\,\,\,\, f\mapsto \eta (f^\ast)^{-1}\eta^{-1}\]
which is faithful \cite[Theorem 3.1]{mw17}. By \cite[Theorem 5.4]{onorati2021monodromy}, the image of $\eta_{\ast}$ lies in $O^+(\OG)$, the group of orientation-preserving isometries.

\begin{definition}We define the following:
    \begin{enumerate}
        \item A subgroup $H\leq O^+(\OG)$ is called \textbf{symplectic} if there exists a marked IHS manifold of $OG10$ type $(X, \eta)$ and a subgroup $G\leq \Bir_s(X)$ such that $\eta_{\ast}(G) = H$.
        \item Given a marked IHS manifold of $OG10$ type $(X, \eta)$, we call a subgroup $G\leq \Bir(X)$ \textbf{stable} or \textbf{saturated} if so is $\eta_{\ast}(G)\leq O^+(\OG)$ (see Definitions \ref{defn: stable}, \ref{defn: saturation}). 
    \end{enumerate}
\end{definition}
  \begin{remark}
      It is known that IHS manifolds of $OG10$ type admit no nontrivial symplectic automorphisms of finite order \cite{giovenzana2023symplectic}.
  \end{remark}

The aim of this paper is to classify finite groups of symplectic birational transformations of IHS manifolds of $OG10$ type. Using the surjectivity of the period map $\mathcal{P}$ and the injectivity of $\eta_{\ast}$ for any marked IHS manifolds of $OG10$ type $(X, \eta)$, our approach is to classify symplectic finite subgroups of $O^+(\OG)$. We explain now how to determine whether a given finite subgroup of $O^+(\OG)$ is symplectic.

Let again $(X, \eta)$ be a marked IHS manifold of $OG10$ type. Any birational transformation of $X$ preserves the birational K\"ahler cone $\mathcal{BK}(X)$; the structure of this cone for a manifold of $OG10$ type is well understood \cite{mongardi2020birational}. In particular, the walls of $\overline{\mathcal{BK}(X)}$ are defined by the hyperplanes $D^\perp\subset \calC(X)$, where $D$ is a \textbf{stably prime exceptional divisor} \cite[\S 5]{markman}, and $\calC(X)$ denotes the connected component of the positive cone of $X$ containing a K\"ahler class. 
  We define the following set of vectors:
 \[\markman:=\{v\in\OG : v^2=-2\}\cup \{v\in \OG : v^2=-6,\; \divi_{\OG}(v)=3\}.\vspace*{-0.5cm}\]
  \begin{proposition}\cite[Proposition 3.1]{mongardi2020birational}
      Let $(X, \eta)$ be a marked IHS manifold of $OG10$ type. Then $D\in\Pic(X)$ is stably prime exceptional if and only if $\eta(D)\in \markman.$
 \end{proposition}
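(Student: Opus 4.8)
The plan is to translate the geometric statement into a lattice-theoretic one using Markman's description of the birational K\"ahler cone together with the identification $\Mon^2(\OG)\cong O^+(\OG)$ already recalled. Recall that $D\in\Pic(X)$ with $D^2<0$ is stably prime exceptional precisely when the hyperplane $D^\perp$ supports a wall of $\mathcal{BK}(X)$, and by Markman's theory the reflection $R_D(x)=x-\frac{2(x,D)}{D^2}D$ in such a wall is a monodromy operator; conversely, the reflection attached to a genuine prime exceptional divisor is always monodromy. I would therefore split the proposition into (i) a purely arithmetic step identifying the primitive negative classes $D$ for which $R_D$ lies in $O^+(\OG)$, and (ii) a realizability step matching those classes with actual stably prime exceptional divisors.

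For step (i), integrality of $R_D$ requires $\frac{2(x,D)}{D^2}\in\bZ$ for all $x\in\OG$, i.e. $D^2\mid 2\,\mathrm{div}_{\OG}(D)$. Since $D_{\OG}\cong D_{A_2}\cong\bZ/3$ has exponent $3$, one has $\mathrm{div}_{\OG}(D)\in\{1,3\}$ for primitive $D$. The case $\mathrm{div}=1$ forces $D^2=-2$ (evenness excludes $D^2=-1$), giving short roots; the case $\mathrm{div}=3$ allows $D^2\in\{-2,-6\}$, but comparing $D^2/9\pmod{2\bZ}$ with the value of the discriminant form of $A_2$ on a generator of $\bZ/3$ rules out $D^2=-2$, leaving exactly $D^2=-6$, $\mathrm{div}_{\OG}(D)=3$, the long roots. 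Thus $R_D$ is an integral reflection iff $\eta(D)\in\markman$. A reflection in a negative class acts as $+1$ on a maximal positive-definite $3$-space contained in $D^\perp$, hence preserves the orientation of the positive cone and lies in $O^+(\OG)=\Mon^2(\OG)$; so every such $R_D$ is automatically a monodromy reflection.

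This already yields the implication ``$D$ stably prime exceptional $\Rightarrow\eta(D)\in\markman$'': if $D^\perp$ is a wall of $\mathcal{BK}(X)$ then $R_D$ is monodromy, forcing the arithmetic constraints above. For the converse I would argue that the property of supporting a wall is invariant under $\Mon^2$ and under deformation of the $\langle D\rangle$-polarised family, reduce the $O^+(\OG)$-orbit of classes in $\markman$ to finitely many numerical types, and exhibit for each type an IHS manifold of $OG10$ type on which $\pm D$ is represented by a prime exceptional divisor. For the long roots this is the class of the exceptional divisor of O'Grady's resolution, which is a $-6$, divisibility-$3$ class, while the short-root walls can be produced by a deformation/MBM argument in the spirit of Amerik--Verbitsky; surjectivity of the period map (\Cref{lem:surj period map}) then guarantees the existence of a manifold with the required Picard class realizing the wall.

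The main obstacle is precisely this converse realizability step: the arithmetic of step (i) is routine, but showing that every numerically admissible class is \emph{actually} stably prime exceptional --- rather than merely having a monodromy reflection --- requires either invoking Markman's wall-divisor/MBM machinery to guarantee that each such $R_D$ bounds the movable cone, or producing explicit geometric models for the short-root case and propagating them through the entire $O^+(\OG)$-orbit by deformation. Controlling that every class of a given numerical type is monodromy-equivalent to a realized one, so that no admissible orbit is overlooked, is where the genuine content lies.
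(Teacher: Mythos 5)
The paper offers no proof of this statement: it is imported with a citation to \cite[Proposition 3.1]{mongardi2020birational}, so there is no internal argument to compare against. Your plan nonetheless follows the same route as the cited source. The necessity half is correct and essentially complete: for a primitive $D$ one has $\mathrm{div}_{\OG}(D)\in\{1,3\}$ because $D_{\OG}\cong\bZ/3\bZ$, integrality of $R_D$ forces $D^2\mid 2\,\mathrm{div}_{\OG}(D)$, and comparing $D^2/9$ with the value $-2/3$ of the discriminant form on the nonzero classes correctly eliminates the case $(D^2,\mathrm{div})=(-2,3)$, leaving exactly $\markman$. Since $R_D$ fixes the signature-$(3,20)$ sublattice $D^{\perp}$ pointwise, it lies in $O^{+}(\OG)=\Mon^2(\OG)$, and Markman's theorem that the reflection in a prime exceptional divisor is a monodromy operator then gives the forward implication.

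The gap you flag yourself is genuine, and it is where the content of the proposition lives. Two things must be supplied for the converse. First, the orbit count should be made explicit: since $\OG$ contains $U\oplus U$, Eichler's criterion shows the $O^{+}(\OG)$-orbit of a primitive vector is determined by its square and divisibility, so there are exactly two orbits in $\markman$ to realize. Second, and more seriously, the realization of the short roots is only gestured at: ``a deformation/MBM argument in the spirit of Amerik--Verbitsky'' is not a proof, because having an integral monodromy reflection is necessary but not sufficient for a class to be stably prime exceptional. One must exhibit an actual prime exceptional divisor of square $-2$ and divisibility $1$ on some manifold of $OG10$ type (the cited source produces explicit contractions on desingularised moduli spaces of sheaves), just as the exceptional divisor of O'Grady's resolution $\widetilde{M}_v(S,\theta)\to M_v(S,\theta)$ realizes the long root; only then does deformation and monodromy invariance of the stably prime exceptional property propagate the conclusion to every class in $\markman$. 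As written, your argument establishes the forward implication together with the strictly weaker statement that every class in $\markman$ admits a monodromy reflection.
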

  
  It follows that $\calB\calK(X)$ is contained in an \textbf{exceptional chamber}; that is, a component of \[\calC(X)\setminus \bigcup_{v\in \markman} v^\perp\](see \cite[Theorem 3.2]{mongardi2020birational}).
  Using this description, we can rephrase the Global Torelli theorem (due to Huybrechts, Markman and Verbitsky) in a way that is more suited for the study of symplectic birational transformations of $X$. This will provide us with criteria for when a finite group $H\leq O^+(\OG)$ is symplectic.

	\begin{lemma}[{{{\cite[Theorems 2.15 and 2.17]{grossi2020finite}}}}]\label{lem: crit for bir symp eff}
	Let $H\leq O^+(\Lambda)$ be finite. Then $H$ is symplectic if and only if both of the following hold:
	\begin{enumerate}
		\item $\Lambda_H$ is negative definite, and
		\item $\Lambda_H\cap \markman=\varnothing$.
	\end{enumerate}
\end{lemma}

\subsection{Classification problems}\label{subsec: classification}
Recall that $\Lambda:= U^3\oplus E_8^2\oplus A_2$ is the lattice isometric to the second integral cohomology lattice of any IHS manifold of $OG10$ type. We aim to classify finite groups $H\leq O^+(\Lambda)$ that are symplectic, i.e. induced by a finite group $G$ of symplectic birational transformations for IHS manifolds $X$ of $OG10$ type. By the discussion in \Cref{subsec: bir transf}, this is equivalent to classifying, up to conjugacy, finite subgroups $H\leq O^+(\Lambda)$ with negative definite coinvariant sublattice, and such that $\Lambda_H\cap\markman = \varnothing$. 

\begin{remark}\label{neg def coinv}
    According to \cite[Lemma 2.3]{grossi2020finite}, if $H\leq O(\Lambda)$ is such that $\Lambda_H$ is negative definite, then $H\leq O^+(\Lambda)$.
\end{remark}

\begin{remark}\label{only sat matters}
    For a classification purpose, it is enough to classify such groups $H$ which are saturated in $O^+(\Lambda)$. In fact, any finite subgroups of $O^+(\Lambda)$ is a subgroup of a saturated group. Moreover, if a finite subgroup $H\leq O^+(\Lambda)$ is symplectic, then according to  \Cref{defn: saturation} and \Cref{lem: crit for bir symp eff}, so is its saturation (see also \cite[Remark 3.18]{bh22}). 
\end{remark}

We use the action of such a group $H\leq O^+(\Lambda)$ on the discriminant group $D_\Lambda$ in order to aid this classification. 
Note that, as abelian groups, $D_\Lambda\cong \mathbb{Z}/3\mathbb{Z}$, and in particular, there is an exact sequence
\[ 1\to O^{+,\#}(\Lambda)\to O^+(\Lambda)\to O(D_\Lambda)\to 1\]
where $O(D_\Lambda)$ has order 2 generated by $-\textnormal{id}$. Hence, if $H\leq O^+(\Lambda)$ is a finite subgroup, we have an exact sequence
\begin{equation}\label{short exact sequence}
    1 \to H^\#\to H\to \mu_2
\end{equation}
where $H\to \mu_2$ is defined by the discriminant representation of $H$.

\begin{corollary}\label{cor: cases}
    Let $H\leq O^+(\Lambda)$. 
    Then one of the following is satisfied:
    \begin{enumerate}
    \item either $H^\#$ is trivial and $H$ is cyclic of order 2;
    \item $H = H^\#$ is nontrivial; or
    \item $H^\#$ is nontrivial and $[H:H^\#] = 2$.
\end{enumerate}
\end{corollary}
Note that the classification of symplectic finite groups satisfying case (1) has already been completed by the same authors in \cite{marqOG10}. We treat the two other cases inductively. In what follows, we start by showing how to construct representatives of $O^+(\Lambda)$-conjugacy classes of finite saturated subgroups of $O^{+,\#}(\Lambda)$, covering case (2). Then, we cover the last case (3) by adapting the extension approach of \cite{brandhorst_hashimoto,bh22} to \Cref{short exact sequence}.

\section{Finite groups of stable symplectic isometries}\label{sec: stable sympl iso}

In this section, we classify symplectic finite subgroups $H\leq O^+(\Lambda)$, up to conjugacy in $O^+(\Lambda)$, that satisfy \Cref{cor: cases} case (2), i.e., when the group $H = H^\#$ is stable. More precisely, we prove the following:

\begin{theorem}\label{theo: sec 3 main}
     Let $(X, \eta, G)$ be a triple consisting of a marked IHS manifold $(X, \eta)$ of $OG10$ type and a finite group of symplectic birational transformations $G\leq\Bir(X)$ such that $\eta_\ast(G)\leq O^+(\Lambda)$ is stable. Then, up to conjugacy, $\eta_\ast(G)\leq O^+(\Lambda)$ is contained in one of the 192 stable groups of the  dataset \cite{database}.
\end{theorem}

We outline the strategy to prove \Cref{theo: sec 3 main}. 
Let $H\leq O^{+,\#}(\Lambda)$ be a stable symplectic finite subgroup that we aim to classify. 
In \Cref{subsec: stable symplectic lattices}, we show that the group $H$ is completely determined by the negative definite primitive sublattice $C:=\Lambda_H\subseteq \Lambda$, and $H$ is identified with the group $O^\#(C)$, which fixes no nontrivial vector in $C$. 
Thus, we can classify such groups $H$ by classifying instead primitive sublattices $C$ of $\Lambda$ satisfying the above properties and which in addition satisfy $C\cap \markman =\varnothing.$ 
Such a lattice is called a \textbf{heart} of $\Lambda.$
In \Cref{subsec: isometry class of hearts} we show that a heart $C\subseteq \Lambda$ primitively embeds into the Borcherds lattice, as a coinvariant lattice for a finite subgroup $H\leq \Aut(\calD)\leq O^+(\bB).$
In \Cref{subsec: algor class}, we determine the isometry class of the hearts that embed primitively into $\bB$ but not into $\bL$, represented by the three lattices in \Cref{App: exceptional hearts} (see \Cref{coro exceptional hearts}).
Finally, in \Cref{subsec: results for stable sympl} we complete the classification of hearts of $\Lambda,$ and obtain 192 $O^+(\Lambda)$-conjugacy classes of saturated symplectic finite subgroups of $O^{+,\#}(\Lambda),$ completing the proof of \Cref{theo: sec 3 main}.

\subsection{Finite stable subgroups of \texorpdfstring{$O^+(\Lambda)$}{O+(Λ) }}\label{subsec: stable symplectic lattices}
Let $H\leq O^+(\Lambda)$ be a symplectic finite subgroup such that $H=H^\#$. In particular, we assume $H\leq O^{+,\#}(\Lambda)$. The following holds. 

\begin{lemma}\label{lem: stably sat}
        Let $H\leq O^{+,\#}(\Lambda)$ be a nontrivial subgroup. Then $H\to O(\Lambda_H)$ is injective with image lying in $O^\#(\Lambda_H)$. Moreover, $H$ is saturated in $O^{+,\#}(\Lambda)$ if and only if $H = O^\#(\Lambda_H)$ holds, seeing $H$ as a subgroup of $O(\Lambda_H)$.
\end{lemma}

\begin{proof}
    Let us embed $\Lambda$ primitively into the even unimodular lattice $M := U^5\oplus E_8^2$, with orthogonal complement $F\cong A_2(-1)$. 
    Since $H$ is stable, we can extend it to a group of isometries $\widetilde{H}\leq O(M)$ acting as the identity on $F$, in such a way that $M_{\widetilde{H}} = \Lambda_H$. 
    Note moreover that $F\oplus \Lambda^H\subseteq M^{\widetilde{H}}$ is an overlattice. Since $\widetilde{H}$ acts trivially on $M^{\widetilde{H}}$, we see that $\widetilde{H}\to O(M_{\widetilde{H}})$ is injective. 
    Moreover, since $M$ is unimodular, the equivariant gluing condition \Cref{eq:egc} tells us that $\widetilde{H}$ maps into $O^\#(M_{\widetilde{H}})$. 
    We therefore get an injective morphism \[ H\xrightarrow{\cong}\widetilde{H}\hookrightarrow O^\#(M_{\widetilde{H}}) = O^{\#}(\Lambda_H).\]
    Let $g\in O^\#(\Lambda_H) = O^\#(M_{\widetilde{H}})$. 
    We can extend $g$ to an isometry $\tilde{g}\in O(M)$ acting as the identity on $M^{\widetilde{H}}$.
    Since $F\oplus \Lambda^H\subseteq M^{\widetilde{H}}$, we have that $\widetilde{g}$ restricts to an isometry $h$ of $\Lambda = F^\perp_M$ fixing pointwise $\Lambda^H$: $h$ is the extension of $g$ to $O(\Lambda)$ with the identity on $\Lambda^H$. 
    Moreover $h\in O^{+, \#}(\Lambda)$ according to \Cref{eq:egc} and \Cref{neg def coinv}.
    In particular $h$ lies in the saturation of $H$ in $O^{+,\#}(\Lambda)$.

    Conversely, for any $h\in O^{+, \#}(\Lambda)$ fixing pointwise $\Lambda^H$, we can extend $h$ to an isometry $\widetilde{h}$ of $O(M)$ by extending with the identity on $F$. Similarly as before, the restrictions of $h$ and $\widetilde{h}$ to $\Lambda_H = M_{\widetilde{H}}$ coincide, and they lie in $O^\#(\Lambda_H)$. We therefore conclude that the saturation of $H$ in $O^{+, \#}(\Lambda)$ coincides with the extension of $O^\#(\Lambda_H)$ with the identity on $\Lambda^H$.
\end{proof}

\begin{lemma}\label{cor: prim embedding act free}
    Suppose that $H\leq O^{+,\#}(\Lambda)$ is a nontrivial symplectic finite subgroup. Then the coinvariant sublattice $C := \Lambda_H$ is even negative definite, it does not contain $(-2)$-vectors, and $O^\#(C)$ fixes no nontrivial vector in $C$.
\end{lemma}

\begin{proof}
    The two first statements follow from \Cref{lem: crit for bir symp eff}. For the last statement, we use \Cref{lem: stably sat} which tells us that $H$ maps injectively into $O^\#(C)$, and $C^H = \{0\}$ by definition. Hence, in particular, $O^\#(C)$ fixes no nontrivial vector in $C$.
\end{proof}

It follows that saturated symplectic finite subgroups $H$ of $O^{+, \#}(\Lambda)$ are completely determined by some primitive sublattices $C\subseteq \Lambda$ which are even negative definite, and such that $O^\#(C)$ fixes no nontrivial vector in $C$; here $H$ being defined as the extension of $O^\#(C)$ with the identity on $C^\perp_\Lambda$.
The fixed point free action of $O^\#(C)$ on the sublattice $C\subseteq \Lambda$ is an additional tool in our classification. We thus make the following definition:

\begin{definition}\label{defin: ss-pairs}
    Let $C$ be an even lattice. We say that $C$ is \textbf{stable symplectic} if it is negative definite, and $O^\#(C)$ fixes no nontrivial vector in $C$.
\end{definition}

By \Cref{cor: prim embedding act free}, in order to construct 
saturated symplectic finite subgroups $H$ of $O^{+,\#}(\Lambda)$, it suffices to construct
primitive sublattices $ C\subseteq \Lambda$  where $C$ is stable symplectic, and such that $C\cap\mathcal{W}^{pex} = \varnothing$. In order to ensure this last condition, we observe the following.

Let us denote by $\Pi :=U^3\oplus E_8^3$ the unique even unimodular lattice of signature $(3,27)$.

\begin{proposition}\label{primitive embedding og10 in extended borcherds}
Up to isomorphism, $\Pi$ admits a unique primitive sublattice isometric to $\Lambda$.
\end{proposition}
\begin{proof}
    We note that $E_6$ is the unique (up to isometry) lattice with signature $(0,6)$ and discriminant group $D_{E_6}\cong D_\Lambda(-1)$.
    Hence there exists a primitive embedding of $\Lambda$ into the unimodular lattice $\Pi$, with orthogonal complement $E_6$. Since $O(E_6)\to O(D_{E_6})$ is surjective, by \cite[Proposition 1.14.1]{nikulin} we see that such a primitive embedding is unique up to the action of $O(\Pi)$ and $O(\Lambda)$.
\end{proof}

\begin{lemma}\label{th:radu}
    Let $C\subseteq \Lambda$ be a stable symplectic primitive sublattice without $(-2)$-vectors. Then  $C\cap \markman=\varnothing$.
\end{lemma}
\begin{proof}
    Let us consider the succession of inclusions
    \[C\subseteq C\oplus E_6\subseteq \Lambda\oplus E_6\subseteq \Pi.\]
    We will show that $C\cap\markman = \varnothing$. First note that if $C$ is trivial, then the result necessary holds. In what follows, we assume that $C$, and therefore $O^\#(C)$, is nontrivial.

    Suppose that $C$ has a vector $v$ of square $-6$ such that $\divi_\Lambda(v) = 3$.
    Since the lattice $C$ is stable symplectic, there exists an isometry $g\in O^\#(C)$ such that $g(v)\neq v$ holds. Note that $g(v)\neq -v$ also holds: in fact, since $g$ is stable, we have equalities
    \[\frac{v}{3}+C = D_g\left(\frac{v}{3}+C\right) = \frac{g(v)}{3}+C\in D_C.\]
    If $g(v) = -v$ were to hold, we would have that $\frac{2v}{3}\in C$: however this vector has square $\frac{-8}{3}\notin \bZ$ contradicting that $C$ is an even lattice. Hence $v':=g(v)$ is not proportional to $v$, and it still has divisibility 3 in $\Lambda$ (divisibility is preserved under isometry, and $g\in O^\#(C)$ being stable can be seen as an isometry of $\Lambda$). Similarly to the proof of \cite[Theorem 4.5, $(iii) \implies (i)$]{laza2019automorphisms} it follows that the primitive closure $M$ of $E_6+\bZ v+\bZ v'$ in $\Pi$ is isometric to $E_8$. In particular, since $E_8$ has a unique sublattice isometric to $E_6$ (up to isometry), with orthogonal complement isometric to $A_2$, one concludes that $A_2\cong (E_6)^\perp_M$ embeds into $C$: this is a contradiction since $C$ contains no $(-2)$-vectors. Hence, $C\cap\markman=\varnothing$.
\end{proof}

By \Cref{th:radu}, we see that any stable symplectic primitive sublattice $C\subseteq \Lambda$ not containing $(-2)$-vectors is the coinvariant sublattice of a symplectic finite subgroup $H\leq O^{+, \#}(\Lambda)$. We have therefore reduced our problem to constructing and classifying stable symplectic primitive sublattices of $\Lambda$ not containing $(-2)$-vectors. 

\begin{definition}\label{defn: heart }
    A \textbf{heart} is any stable symplectic primitive sublattice $C\subseteq \Lambda$ that does not contain any $(-2)$-vectors.
\end{definition}

In the following sections, we explain how to recover the abstract isometry class of such hearts. We then make use of the following theorem to eventually classify saturated symplectic finite subgroup $H\leq O^{+, \#}(\Lambda)$ up to conjugacy in $O^+(\Lambda)$.

\begin{theorem}\label{main th embeddings}
    Let $C$ be a stable symplectic lattice. Then the set of $O^+(\Lambda)$-conjugacy classes of saturated finite subgroups $H\leq O^{+, \#}(\Lambda)$ such that $\Lambda_H\cong C$ is in bijection with the set of isomorphism classes of primitive sublattices $C'\subseteq \Lambda$ such that $C'\cong C$.
\end{theorem}

\begin{proof}
    First note that since $\Lambda$ has real signature $(3, 21)$, we have that $-\textnormal{id}_\Lambda$ has negative real spinor norm \cite[Example 4.1]{brandhorst2020prime}. Hence $O(\Lambda)/O^+(\Lambda)$ is generated by the coset of $-\textnormal{id}_\Lambda$ which is a central involution. In other terms, for any $f\in O(\Lambda)$ we have that either $f$ of $-f$ lies $O^+(\Lambda)$. Since the conjugation actions on $O(\Lambda)$ induced respectively by $f$ and $-f$  are the same, we deduce that $O^+(\Lambda)$-conjugacy classes and $O(\Lambda)$-conjugacy classes of finite subgroups $H\leq O^{+,\#}(\Lambda)$ coincide. 
    The proof follows then from \Cref{lem: stably sat} which tells us that any saturated subgroup of $O^{+, \#}(\Lambda)$ is uniquely determined by its coinvariant sublattice.
\end{proof}

\begin{remark}\label{several embeddings}
    One may note that a consequence of \Cref{main th embeddings} is that there might be several conjugacy classes of subgroups $H\leq O^{+,\#}(\Lambda)$ whose coinvariant sublattice is abstractly isometric to a fixed stable symplectic lattice $C$.
\end{remark}

\subsection{Isometry classes of hearts of \texorpdfstring{$\Lambda$}{Λ}}\label{subsec: isometry class of hearts}
Recall that  $\mathbb{B} := U\oplus \mathbb{L}$, where $\bL$ is the Leech lattice, with fixed basis $\{e,s\}$ for $U\subseteq \mathbb{B}$ such that $e^2 = 0$, $s^2 = -2$ and $e.s = 1$. We denote again by $\calD$ the closure of Conway's chamber $D_0$.

\begin{lemma}\label{lem: embed in B}
    Let $C\subseteq \Lambda$ be a heart.
    Then $C$ embeds primitively into $\bB$. Moreover, the group $O^\#(C)$ is isomorphic to a subgroup $H\leq \Aut(\calD)$ so that $\bB_{H}\cong C$.
\end{lemma}
\begin{proof}
    Since $C$ is negative definite of rank at most 21, and since
\[\rank(C) + l(D_C) \leq \rank(\Lambda)+l(D_\Lambda) < 26\]
\cite[Proposition 1.15.1]{nikulin}, we see that $C$ embeds primitively into $\bB$ \cite[Corollary 1.12.3]{nikulin}. 

Let us fix $j\colon C\hookrightarrow \bB$ such a primitive embedding. Since $\bB$ is unimodular, \Cref{eq:egc} tells us that we can extend $O^\#(C)$ with the identity on $N := j(C)^\perp_\bB$ to a group $\widetilde{H}\leq O^+(\bB)$. In particular $\bB_{\widetilde{H}} = j(C) \cong C$. Since $C$ does not contain $(-2)$-vectors, $N\otimes \bR$ intersect a chamber $D$ of the positive cone of $\bB$. The group $\widetilde{H}$ acting trivially on $N$, we have that $\widetilde{H}$ preserves a vector in $D$ and thus, $\widetilde{H}$ preserves the entire chamber $D$. Hence $\widetilde{H}\leq \Aut(\overline{D})$. Since both $\Aut(\overline{D})$ and $\Aut(\calD)$ are isomorphic to $O^+(\bB)/W(\bB)$, we obtain that $\widetilde{H}$ is $O^+(\bB)$-conjugate to a subgroup $H\leq\Aut(\calD)$. In particular $\bB_H\cong \bB_{\widetilde{H}}\cong C$.
\end{proof}

\begin{remark}\label{len:hearts of B}
    Let $H\leq \Aut(\calD)$ be a nontrivial subgroup. According to the proof of \Cref{propo: first characterisation finite subgroups}, the group $H$ is finite if and only if $\bB_H$ is stable symplectic and does not contain roots of $\bB$.
\end{remark}


Therefore, as abstract lattices, we can view each heart $C\subseteq \Lambda $ as the coinvariant sublattice of a finite subgroup $H\leq \Aut(\calD)$. As already noted in \Cref{most examples are Leech}, in some cases these are actually primitively embedded into the Leech lattice. Stable symplectic sublattices of the Leech lattice are known and well-understood \cite{H_hn_2016}.
Unfortunately, there exist hearts $C\subset\Lambda$ that embeds primitively into $\bB$, but not $\bL$ --- we call such a hearts \textbf{exceptional}. In the next section, we describe a procedure to recover the abstract isometry class of such exceptional hearts. We then apply \Cref{main th embeddings} to this list of abstract lattices, and to H\"ohn--Mason list of stable symplectic sublattices of $\bL$, to classify conjugacy classes of hearts in $\Lambda$. 

\subsection{Exceptional hearts}\label{subsec: algor class}In this section, we aim to classify the remaining exceptional hearts: that is, the hearts $C'\subseteq \Lambda$ that embed primitively into $\bB$ but not into $\bL$. In order to do so, we will use properties of isometries in $\Aut(\mathcal{D})$, discussed in \Cref{subsec: Borcherds}. \Cref{counterexample} proves that such exceptional hearts do in fact exist.

\begin{example}\label{counterexample}
See the Notebook ``Counterexample" in the dataset \cite{database} for explicit computations and proofs of the following statements. Let $C$ be the negative definite even lattice with Gram matrix
    \[
\scriptscriptstyle{\begin{pmatrix}
-4 &  2 &  2 & -2 & -2 &  2 & -2 &  2 &  2 &  0 &  0 &  0 &  0 &  0 &  2 &  2 & -1 &  1 \\
 2 & -4 & -2 &  0 &  1 & -1 &  2 & -2 & -2 & -1 & -1 &  1 &  1 & -1 & -1 & -2 & -1 & -2 \\
 2 & -2 & -4 &  1 &  0 & -2 &  2 & -2 & -2 & -1 & -1 & -1 &  1 & -1 &  0 & -2 &  0 & -1 \\
-2 &  0 &  1 & -4 & -2 &  0 &  0 &  0 &  0 &  1 &  1 &  0 & -1 &  1 &  0 &  2 &  0 & -1 \\
-2 &  1 &  0 & -2 & -4 &  1 &  0 &  0 &  0 &  1 &  1 & -1 & -1 &  1 &  2 &  2 & -1 &  1 \\
 2 & -1 & -2 &  0 &  1 & -4 &  2 & -2 & -2 &  0 &  0 &  0 &  0 &  0 & -2 & -1 &  1 & -1 \\
-2 &  2 &  2 &  0 &  0 &  2 & -4 &  2 &  2 & -1 &  1 & -1 & -1 &  1 &  0 &  0 & -1 &  2 \\
 2 & -2 & -2 &  0 &  0 & -2 &  2 & -4 & -1 &  1 & -1 &  1 & -1 &  1 & -1 & -1 &  0 & -1 \\
 2 & -2 & -2 &  0 &  0 & -2 &  2 & -1 & -4 &  0 &  1 & -1 &  0 &  0 & -1 & -1 &  0 & -1 \\
 0 & -1 & -1 &  1 &  1 &  0 & -1 &  1 &  0 & -4 & -1 & -1 &  2 & -2 &  0 & -2 & -1 &  0 \\
 0 & -1 & -1 &  1 &  1 &  0 &  1 & -1 &  1 & -1 & -4 &  2 &  2 & -1 &  1 & -1 &  0 & -1 \\
 0 &  1 & -1 &  0 & -1 &  0 & -1 &  1 & -1 & -1 &  2 & -4 &  0 &  0 &  0 &  0 &  0 &  1 \\
 0 &  1 &  1 & -1 & -1 &  0 & -1 & -1 &  0 &  2 &  2 &  0 & -4 &  2 & -1 &  1 &  0 &  1 \\
 0 & -1 & -1 &  1 &  1 &  0 &  1 &  1 &  0 & -2 & -1 &  0 &  2 & -4 &  1 & -1 &  0 & -1 \\
 2 & -1 &  0 &  0 &  2 & -2 &  0 & -1 & -1 &  0 &  1 &  0 & -1 &  1 & -4 & -1 &  1 & -1 \\
 2 & -2 & -2 &  2 &  2 & -1 &  0 & -1 & -1 & -2 & -1 &  0 &  1 & -1 & -1 & -4 &  0 & -1 \\
-1 & -1 &  0 &  0 & -1 &  1 & -1 &  0 &  0 & -1 &  0 &  0 &  0 &  0 &  1 &  0 & -4 &  1 \\
 1 & -2 & -1 & -1 &  1 & -1 &  2 & -1 & -1 &  0 & -1 &  1 &  1 & -1 & -1 & -1 &  1 & -4 
\end{pmatrix}}
.\]
The lattice $C$ lies in the genus $\II_{(0,18)}3^{-7}$,  $O^\#(C)\cong C_3\times C_3$ fixes no nontrivial vector in $C$, and $C$ contains no $(-2)$-vectors. Using \cite[Proposition 1.15.1]{nikulin}, one can check that the lattice $C$ admits a primitive embedding into $\Lambda$ with orthogonal complement isometric to $U(3)^{ 3}$. 

However, we remark that $\text{rank}(C)+l(D_C) = 25$: this implies that $C$ does not embed primitively into the Leech lattice, but it does embed primitively into $\bB$.
\end{example}

In order to determine, up to isometry, the stable symplectic sublattices of $\bB$ not containing roots that do not embed primitively into the Leech lattice, we know from \Cref{most examples are Leech} that we need to look for finite subgroups $H\leq\Aut(\calD)$ such that there does not exist any $v\in \bL$ so that $g(v)-v=\lambda$ for all $(\lambda, g)\in H$ --- we refer to such groups as \textbf{exceptional}. 
Note that since the isometry class of $\bB_H$ is preserved under conjugation of $H$ by any element of $O(\bB)$, we describe a procedure to recover at least one representative for each $\Aut(\calD)$-conjugacy class of finite exceptional subgroups of $\Aut(\calD)$.\bigskip

Recall from the proof of \Cref{iso conway} that we have an exact sequence
\[ 0\to \bL\xrightarrow{\psi}\Aut(\mathcal{D})\xrightarrow{\pi} O(\bL)\to 1\]
where $\pi$, which is induced by the isometry $e^\perp/\bZ e\cong \bL$, admits a section $\phi\colon O(\bL)\to \Aut(\calD)$. Moreover, the same result tells us that $\Aut(\calD) = \bL\rtimes O(\bL)$ so any element $h\in \Aut(\calD)$ of finite order can be written as a pair $(\lambda, g)$ where $g := \pi(h)$ and $\lambda\in\bL_g$ (\Cref{propo: first comput}). The section $\phi$ above sends any $g\in O(\bL)$ to $\phi(g) := (0, g)\in\Aut(\calD)$.

Let $G\leq O(\bL)$ be a subgroup.
We define a $\bZ$-linear map
\[p_G\colon \bL\to\prod_{g\in G}\bL_g,\; v\mapsto (g(v)-v)_{g\in G},\]
whose kernel is exactly $\bL^G$. Let us denote by $m$ the order of $G$. We define moreover
\[\bL\to \prod_{g\in G}\bL_g/m\bL_g,\; v\mapsto (g(v)-v+m\bL_g)_{g\in G}\]
whose kernel is denoted by $K(G)$. We observe that $K(G)$ contains $\bL^G+m\bL$. We denote by $A(G) := K(G)/(m\bL+\bL^G)$ --- it is a finite abelian group. We have seen in \Cref{propo: first characterisation finite subgroups} and \Cref{rmk enough take order of group} that for any finite subgroup $H\leq \Aut(\calD)$ satisfying $\pi(H) = G$, there exists a vector $v\in \bL$ such that every element $h = (\lambda, g)\in H$ satisfies
\[g(v) - v = m \lambda\]
and $\lambda\in \bL_g$. In particular, $v\in K(G)$ and it is uniquely determined by $H$, up to translation by a vector in $\ker(p_G) = \bL^G$.

\begin{lemma}\label{procedure lemma}
    Let $H, H'\leq \Aut(\calD)$ be finite subgroups such that $\pi(H) = \pi(H') = G$, and let $v,v'\in K(G)$ be associated vectors. Then the following items are equivalent:
    \begin{enumerate}
        \item the groups $H, H'$ are $\bL$-conjugate in $\Aut(\calD)$;
        \item the vectors $v$ and $v'$ define the same class in $A(G)$.
    \end{enumerate}
\end{lemma}

\begin{proof}
    Let us suppose that there exists $\mu\in \bL$ be such that $\psi_\mu H\psi_\mu^{-1} = H'$, where $\psi_{\mu}$ is the Eichler--Siegel transformation associated to $\mu$ (see \Cref{eichlersiegel}). Then, for all $g\in G$, we have
    \[\psi_\mu\left(\frac{g(v)-v}{m}, g\right)\psi_\mu^{-1} = \left(\frac{g(v')-v'}{m}, g\right)\]
    which is equivalent to
    \[g(v-v') - (v-v') = m(g(\mu)-\mu).\]
    Thus, we conclude that $v-v'-m\mu\in \bL^g$ for all $g\in G$, meaning exactly that $v-v'\in \bL^G+m\bL$. The converse holds similarly, by reversing the order of the arguments.
\end{proof}

\begin{corollary}\label{procedure corollary}
    The group $A(G)$ is trivial if and only if for all $H\leq \Aut(\calD)$ finite such that $\pi(H) = G$, the lattices $\bB_H$ and $\bL_G$ are isometric.
\end{corollary}

\begin{proof}
    If $A(G)$ is trivial, we know from \Cref{procedure lemma} that any $H\leq \Aut(\calD)$ finite such that $\pi(H) = G$ is $\bL$-conjugate to $\phi(G)$: in particular $\bB_H\cong \bB_{\phi(G)} = \bL_G$.

    Now, if $A(G)$ is nontrivial, then there is a vector $v\in \bL\setminus (\bL^G+m\bL)$ such that, for all $g\in G$, $\lambda_g := \frac{g(v)-v}{m}\in \bL_g$ and, $\{\lambda_g\}_{g\in G}$ does not lie in the image of $\bL$ by $p_G$. Therefore, according to \Cref{most examples are Leech}, the finite subgroup $H := \{(\lambda_g, g)\;\mid\; g\in G\}$ satisfies that $\bB_H\not\cong \bL_G$.
\end{proof}

\begin{remark}\label{procedure remark}
    Let $H'\leq \Aut(\calD)$ be finite such that $G' := \pi(H')$ is $O(\bL)$-conjugate to $G$. Let $f\in O(\bL)$ be such that $fG'f^{-1} = G$. Then the group
    \[H := \left\{(f(\lambda'),\, fg'f^{-1})\;\mid\; (\lambda', g')\in H'\right\} = (0, f)H'(0, f^{-1})\]
    is conjugate to $H'$ and satisfies $\pi(H) = fG'f^{-1} = G$.
\end{remark}

Therefore, in order to construct at least one representative for each $\Aut(\calD)$-conjugacy class of finite exceptional subgroups $H$ of $\Aut(\calD)$,  we proceed as follows:

\begin{enumerate}
    \item we start by fixing a stable symplectic sublattice $C$ of the Leech lattice $\bL$ \cite{H_hn_2016};
    \item we compute a set $\mathcal{G}$ of representatives of $O(\bL)$-conjugacy classes of finite subgroups $G$ of $O^\#(C)$ such that $C^G=\{0\}$ (\Cref{procedure remark});
    \item for any $G\in \mathcal{G}$,  we compute $A(G)$: if this is trivial, we try a new group (\Cref{procedure corollary});
    \item for every $[v]\in A(G)$ nontrivial, we define $H:= \left\{\left(\frac{g(v)-v}{\#G}, g\right)\;\mid\; g\in G\right\}\leq \Aut(\calD)$.
\end{enumerate}
Note that according to \Cref{procedure lemma}, the $\Aut(\calD)$-conjugacy class of $H$ in step (4) does not depend on a choice of a representative for the nontrivial class $[v]\in A(G)$.

\begin{theorem}\label{theorem exceptional lattices}
    Let $H\leq \Aut(\calD)$ be an exceptional finite subgroup so that $\bB_H$ has rank at most 21. Then, $\bB_H$ is abstractly isometric to one of the 101 stable symplectic lattices in the folder ``exceptional" of the dataset \cite{database}.
\end{theorem}

\begin{proof}
    We apply the previous procedure to the list of stable symplectic sublattices of the Leech lattice, of rank at most 21. For each such sublattices $C\leq \bL$, we enumerate conjugacy classes of subgroups $G\leq O^\#(C)$ so that $C^G = \{0\}$ and $A(G)$ is nontrivial. By doing so, we obtain at least one representative for every $\Aut(\calD)$-conjugacy class of exceptional finite subgroups of $H\leq \Aut(\calD)$ satisfying $\pi(H) = G$ (\Cref{procedure lemma}, \Cref{procedure remark}). We collect the coinvariant sublattices $\bB_H$ associated to each such exceptional subgroup of $H\leq \Aut(\calD)$, and we keep only one representative for each isometry class of lattices they represent. We record information about such lattices $\bB_H$ in \Cref{appendix: exceptional stable}, \Cref{tab: exceptional}.
\end{proof}

For our purposes, we only need those stable symplectic lattices of $\bB$ that can embed into $\Lambda$. Thus, we conclude the following:

\begin{corollary}\label{coro exceptional hearts}
    Let $C\subseteq \Lambda$ be an exceptional heart. Then $C$ is abstractly isometric to one of the three stable symplectic lattices given in \Cref{App: exceptional hearts}.
\end{corollary}

\begin{proof}
    According to the proof of \cite[Proposition 1.15.1]{nikulin}, a stable symplectic lattice $C$ embeds primitively into $\Lambda$ only if for all prime number $p$
    \[ \rank(C)+l(D_p) \leq 24+\delta_{3,p}\]
    where $l(D_p)$ is the length of the $p$-Sylow subgroup of $D_C$, and $\delta_{3,p} = 1$ if $p=3$ and 0 otherwise. In fact, it follows from the fact that $D_{\Lambda}$ has order 3 and any $\bZ_3$-lattice admits an orthogonal basis.
    We deduce from \Cref{tab: exceptional} that there are only three possible isometry classes of exceptional heart $C\subseteq \Lambda$.
\end{proof}

\begin{remark}
    If $C\subseteq \Lambda$ is an exceptional heart, \Cref{coro exceptional hearts} tells us what its isometry class can be. However, we have not proved yet that each of the three are realised as an exceptional heart of $\Lambda$.
\end{remark}

One observes that for all sublattices $C\subseteq \bB$ presented in \Cref{tab: exceptional}, we have $\text{rank}(C)+l(D_C)>24$. In particular, the following holds.

\begin{corollary}\label{main th stable symplectic}
    Let $C$ be a stable symplectic lattice of rank at most 21 and not containing $(-2)$-vectors. Then $C$ embeds primitively into the Leech lattice $\bL$ so that $\bL_{O^\#(C)} \cong C$ if and only if 
    $$\textnormal{rank}(C)+l(D_C)\leq 24.$$
\end{corollary}

\begin{proof}
    Already note that if $C$ embeds primitively into the Leech lattice, then according to \cite[Corollary 1.12.3]{nikulin} we have that \(\rank(C)+l(D_C)\leq \rank(\bL) = 24.\)
    
    Now suppose that $\rank(C)+l(D_C)\leq 24$. By this assumption, we know that $C$ embeds primitively into $\bB$ and this in a unique way up to the action $O(\bB)$ \cite[Proposition 1.14.1]{nikulin}. In particular, we can choose a primitive embedding $j\colon C\hookrightarrow \bB$ such that the complement $N:= j(C)^\perp_\bB$ intersects the interior of $\mathcal{D}$. Therefore, as in the proof of \Cref{lem: embed in B}, we have that the finite group $H\leq O^+(\bB)$ obtained by extending $O^\#(C)$ with the identity on $N$ preserves $\mathcal{D}$. Thus $C$ is the coinvariant sublattice of a finite subgroup of $\Aut(\calD)$. There are two cases: either $H$ is exceptional, or $C$ is isometric to $\bL_{\pi(H)}$.
    However, the former is not possible since \Cref{tab: exceptional} tells us that the coinvariant sublattices $N$ of exceptional finite subgroups of $\Aut(\calD)$ satisfy
    \[\rank(N)+l(D_N)\geq 25.\qedhere\]
\end{proof}

\subsection{Results and comments}\label{subsec: results for stable sympl}
Now that we have determined the possible isometry classes for the hearts of $\Lambda$, we can conclude our classification by testing whether each such lattices embeds primitively into $\Lambda$, and in how many ways up to the action of $O(\Lambda)$ (\Cref{main th embeddings}).

\begin{proposition}\label{propo: th 4.1 lattice}
    There are exactly 192 $O^+(\Lambda)$-conjugacy classes of saturated symplectic finite subgroups $H\leq O^{+, \#}(\Lambda)$. 
\end{proposition}
\begin{proof}
    According to \Cref{main th embeddings} and \Cref{th:radu}, it suffices to compute representatives for the isomorphism classes of hearts $C\subseteq \Lambda$. According to \Cref{lem: embed in B}, \Cref{len:hearts of B}, \Cref{most examples are Leech} and \Cref{coro exceptional hearts}, such a heart is abstractly isometric either to a stable symplectic sublattice of the Leech lattice $\mathbb{L}$, or to one of the three exceptional lattices in \Cref{App: exceptional hearts}. Recall that the former are classified in \cite{H_hn_2016}. We complete the classification by applying \cite[Proposition 1.15.1]{nikulin} to each such lattices $C$. We use an implementation of the algorithm described in the proof of \cite[Proposition 1.15.1]{nikulin} by the second author in \cite[QuadFormAndIsom]{oscar-book} to compute a representative for each isomorphism class of primitive sublattices of $\Lambda$ isometric to $C$. The results of such computations are available in the folder ``data" of the dataset \cite{database}: we give an overview of this dataset in \Cref{app: table of hearts}, \Cref{tab: lovely table 45}. \qedhere
\end{proof}

In each case from \Cref{propo: th 4.1 lattice}, the groups $H\leq O^{+,\#}(\Lambda)$ are saturated in $O^{+,\#}(\Lambda)$ but they might not be saturated in $O^+(\Lambda)$. Whether such a group $H$ is saturated in $O^+(\Lambda)$ can be tested as follows.

\begin{lemma}\label{lemma saturation}
    Let $C\subseteq \Lambda$ be a negative definite primitive sublattice, and let $P(C)$ be the largest subgroup of $O(C)$ which can be extended with the identity on $C^\perp_\Lambda$. Suppose that $P(C)$ fixes no nontrivial vector in $C$. Then, for any subgroup $H\leq P(C)$ fixing no nontrivial vector in $C$, we have that
    \[\Sat_{O^+(\Lambda)}(H) = P(C),\]
    where we see $H$ and $P(C)$ as subgroups of $O^+(\Lambda)$ by extending with the identity on $F:= C^\perp_\Lambda$.
\end{lemma}

\begin{proof}
    Since $H$ fixes no nontrivial vector in $C$, we have that $\Lambda_H = C$ --- hence $\Sat_{O^+(\Lambda)}(H)$ is the pointwise stabiliser of $F:= C^\perp_\Lambda$ in $O^+(\Lambda)$. Now, by definition, any element $f$ of $\Sat_{O^+(\Lambda)}(H)$ acts trivially on $F$, so its restriction $f_{\mid C}$ to $C$ lies in $P(C)$. Conversely, by definition, any element of $P(C)$ extends to an isometry of $\Lambda$ with trivial spinor norm (\Cref{neg def coinv}) and acting trivially on $F$. 
\end{proof}

\begin{remark}\label{rmk saturation}
With the notation from \Cref{lemma saturation}, let $ D_F\geq H_F\xrightarrow{\gamma} H_C\leq D_C$ be the glue map associated to the primitive extension $F\oplus C\subseteq \Lambda$. Let us denote by $\pi_C\colon O(C)\to O(D_C)$ the discriminant representation of $O(C)$, whose image is denoted by $G_C:= \pi_C(O(C))$. Let moreover $G_C(H_C)$ be the stabiliser of $H_C$ in $G_C$, and let us finally denote $K_C := \ker(G_C(H_C)\to O(H_C))$. Then it follows from \Cref{eq:egc} that $P(C) = \pi_C^{-1}(K_C)$. In particular, since $C$ is negative definite, we have that
    \[\#\Sat_{O^+(\Lambda)}(H) = \#\pi_C^{-1}(K_C) = \#K_C\times\#\ker(\pi_C) = \displaystyle\frac{\#K_C\#O(C)}{\#G_C}.\]
\end{remark}

In the case of nonsaturated groups, there is a way of characterising the saturation of $O^\#(C)$ in $O^+(\Lambda)$.

\begin{proposition}\label{possible saturations}
    Let $H \leq O^{+,\#}(\Lambda)$ be saturated, and let $G$ be the saturation of $H$ in $O^+(\Lambda)$. Then $G^\#=H$ and $[G:H] \leq 2$.
\end{proposition}

\begin{proof}
    The fact that $G^\#=H$ follows from $H$ being saturated in $O^{+,\#}(\Lambda)$. We conclude by remarking that $G/H$ embeds into $\text{im}(O(\Lambda)\to O(D_\Lambda))$ which has order 2.
\end{proof}

\begin{proof}[Proof of \Cref{theo: sec 3 main}]
    This follows from \Cref{lem: crit for bir symp eff} together with \Cref{propo: th 4.1 lattice} for the list of conjugacy classes of groups, and \Cref{lemma saturation} and \Cref{rmk saturation} for determining the saturated ones (\Cref{only sat matters}).
\end{proof}

Let $X$ be an IHS manifold of $OG10$ type and let $f\in\Bir_s(X)$ be of prime order $p$. It follows that $\eta_*(f)$ acts trivially on the discriminant group $D_\Lambda$ unless $p=2$. Using the results of \Cref{theo: sec 3 main} and the classification of symplectic birational involutions \cite{marqOG10}, we can extract a complete classification of prime order symplectic birational transformations of manifolds of $OG10$ type. We display for convenience of interested readers.

\begin{theorem}\label{th: prime order}
    Let $X$ be an IHS manifold of $OG10$ type, $f\in \Bir_s(X)$ a symplectic birational transformation of prime order $p$. Then the induced action of $H:=\langle \eta_*(f)\rangle$ on $\Lambda$ appears in \Cref{fig:cyclic prime}.
\end{theorem}

\begin{proof}
The classification of such $H$ for $p=2$ follows from \cite{marqOG10}.
Suppose now that $p$ is odd. In particular, we have that $H$ is stable of prime order $p$, and therefore $C:=\Lambda_H$ is $p$-elementary (see for instance \cite[Lemma 2.8]{grossi2020finite}). Hence, by \Cref{propo: th 4.1 lattice}, we see that either $C$ embeds primitively into the Leech lattice, or $p = 3$ and $C$ is isometric to the lattice defined in  \Cref{counterexample}. 
\begin{enumerate}
    \item In the former case, we know that $C$ embeds primitively into the Leech lattice, so we can use the classification of conjugacy classes of prime order isometries of the Leech lattice in \cite{H_hn_2016}. For each class of prime order isometries $g$ of the Leech lattice, we know that $ \mathbb{L}_g$ is stable symplectic and we moreover have that $(\bL_g, g_{\mid \bL_g})$ is unique up to isomorphism. Hence, we can infer from \Cref{tab: lovely table 45} whether $\bL_g$ embeds primitively into $\Lambda$. If it does, then we know that $\Lambda$ has a prime order isometry $f\in O^{+,\#}(\Lambda)$ with $\Lambda_f\cong \bL_g$, and \Cref{tab: lovely table 45} also tells us what is the genus of $F := \Lambda^f$. Note that one can apply \cite[Algorithm 2]{bh22} to the triple of lattices with isometry $((F, \id), (\bL_g, g_{\mid \bL_g}), (\Lambda, \id))$, with $k=p$, to obtain the number of $O^+(\Lambda)$-conjugacy classes of such isometries $f$.
    \item In the latter case, one can use an algorithm of Plesken--Souvignier \cite{ps97} to show that the stable symplectic lattice $C$ defined in \Cref{counterexample} admits a unique conjugacy class of fixed-point free isometry $g$ of order 3.
    By applying again \cite[Algorithm 2]{bh22} to the triple of lattices with isometry $((U(3)^{3}, \id), (C, g), (\Lambda, \id))$ with $p=k=3$, and we obtain that there are two $O^+(\Lambda)$-conjugacy classes of cyclic subgroups of $O^{+, \#}(\Lambda)$ generated by an isometry $f$ of order 3 such that $\Lambda_f\cong C$.
\end{enumerate}
In all case, since the lattices $C$ we consider are stable symplectic without $(-2)$-vectors, we know that the isometries $f\in O^{+, \#}(\Lambda)$ we have constructed are symplectic (\Cref{th:radu}). We refer to the Notebook ``Prime" in the folder ``verification" of the dataset \cite{database} for more details about the computations described in (1) and (2). 
\end{proof}

\begin{remark}
    We define the two following rank 2 even lattices:
    \[ K_7 := \begin{pmatrix}
        4&-1\\-1&2
    \end{pmatrix},\quad\quad H_5 := \begin{pmatrix}
        2&1\\1&-2
    \end{pmatrix}\]
    which are respectively positive definite and indefinite, of respective determinant 7 and $-5$.
\end{remark}

For each finite cyclic group $H$ as in \Cref{th: prime order}, we display in \Cref{fig:cyclic prime} the invariant sublattice $\Lambda^H$ and the genus of the coinvariant sublattice $g(\Lambda_H)$. Moreover, the number of $O^+(\Lambda)$-conjugacy classes of such a group, for fixed $(\Lambda^H, g(\Lambda_H))$, is given in the last column of the table.
{\small\centering\setlength{\tabcolsep}{8pt}
\renewcommand\arraystretch{1.5}
\begin{table}[!t]
    \centering
    \begin{tabular}{cccc}
         $\#H$&$\Lambda^H$&$g(\Lambda_H)$&nb classes \\
         \hline
         \rowcolor{lightgray!40!white}1&$\Lambda$&\textemdash&1\\
         2&$U^3\oplus D_4^3$&$\II_{(0,6)}2^{-6}3^1 $& 1\\
         \rowcolor{lightgray!40!white}2&$U^3\oplus A_2\oplus E_8(2)$&$\II_{(0,8)}2^8$&1\\
         2&$U^2\oplus A_1\oplus A_1(-1)\oplus E_8(2)$ &$\II_{(0,10)}2^{-10}_43^1$&1\\
         \rowcolor{lightgray!40!white}2&$U(2)^2\oplus A_1\oplus A_1(-1)\oplus E_6(2)$&$\II_{(0,12)}2^{12}_4$&1\\
         2&$A_1^9\oplus A_1(-1)^3$&$\II_{(0,12)}2^{-12}_23^1$&1\\
         \rowcolor{lightgray!40!white}2&$A_1^5\oplus A_1(-1)^3$&$\II_{(0,16)}2^{-8}_63^1$&1\\
         3&$U\oplus U(3)^2\oplus E_6$&$\II_{(0,12)}3^6$&1\\
         \rowcolor{lightgray!40!white}3&$U(3)^3\oplus E_6$&$\II_{(0,12)}3^6$&1\\
         3&$U(3)^3\oplus A_2$&$\II_{(0,16)}3^8$&1\\
         \rowcolor{lightgray!40!white}3&$U\oplus U(3)^2$&$\II_{(0,18)}3^5$&1\\
         3&$U(3)^3$&$\II_{(0,18)}3^5$&1\\
         \rowcolor{lightgray!40!white}3&$U(3)^3$&$\II_{(0,18)}3^{-7}$&2\\
         5&$U\oplus U(5)^2\oplus A_2$&$\II_{(0,16)}5^4$&1\\
         \rowcolor{lightgray!40!white}5&$H_5\oplus A_2(-5)$&$\II_{(0,20)}5^3$&1\\
         7&$U(7)\oplus K_7\oplus A_2$&$\II_{(0,18)}7^3$&1\\
         \rowcolor{lightgray!40!white}11&$U\oplus A_2(-11)$&$\II_{(0,20)}11^2$&1\\
         \hline
    \end{tabular}
    \caption{Classification of prime order symplectic birational transformations for IHS manifolds of $OG10$ type}
    \label{fig:cyclic prime}
\end{table}}

\section{From stable symplectic to symplectic}\label{sec: from stable sympl to sympl}
It remains to classify symplectic finite subgroups $H\leq O^+(\Lambda)$ that satisfy \Cref{cor: cases} case (3), i.e. groups $H$ where $H^\#\leq H$ is an index 2 subgroup. In particular, there exists a short exact sequence (\Cref{short exact sequence}):
$$1\rightarrow H^\#\rightarrow H\rightarrow  \mu_2\rightarrow 1,$$ where $H^\#\leq O^{+,\#}(\Lambda).$ In this section, we adapt an extension approach of \cite{brandhorst_hashimoto,bh22} in order to classify those symplectic finite subgroups $H$ whose subgroup $H^\#$ is contained in the classification of \Cref{sec: stable sympl iso}. More precisely, we prove the following:

\begin{theorem}\label{thm: main thm for OG10 intro}
     Let $(X, \eta, G)$ be a triple consisting of a marked IHS manifold $(X, \eta)$ of $OG10$ type and a finite group of symplectic birational transformations $G\leq\Bir_s(X)$. Then, up to conjugacy, $\eta_\ast(G)\leq O(\Lambda)$ is contained in one of the 375 saturated groups of the dataset \cite{database}.
\end{theorem}

We outline our strategy to prove \Cref{thm: main thm for OG10 intro}. 
In \Cref{subsec:BHH extension} we explain our adaptation of the Brandhorst--Hashimoto--Hofmann extension approach. Briefly, we develop a method to extend symplectic finite groups $H^\#$ that act trivially on the discriminant group $D_\Lambda$ by $\mu_2$ to obtain a subgroup $H\leq O^+(\Lambda)$ that is both symplectic and saturated. 
Note that for any generator $hH^\#$ of $H/H^\#\cong \mu_2$, the isometry $h$ 
determines an equivariant primitive extension $$(\Lambda ^{H^\#}, h|_{\Lambda^{H^\#}})\oplus (\Lambda_{H^\#}, h|_{\Lambda_{H^\#}})\subseteq (\Lambda, h).$$ 
Since the group $H$ is generated by $H^\#$ and $h$, knowing $\Lambda_{H^\#}\subseteq \Lambda$, the pair $(\Lambda^{H^\#}, h|_{\Lambda^{H^\#}})$ and the previous equivariant primitive extension, is enough to reconstruct $H\leq O^+(\Lambda)$.
The main result of the section is \Cref{th:classth}, which provides us with a way to construct all possible extensions of a symplectic finite subgroup $H^\#\leq O^{+, \#}(\Lambda)$ starting from the primitive sublattice $\Lambda_{H^\#}\subseteq \Lambda.$ 
In \Cref{sec: algorithms}, we explain the algorithms to undertake this procedure, and prove that for a given primitive sublattice $\Lambda_{H^\#}\subseteq \Lambda$ they return a complete set of conjugacy classes of $(\Lambda, H)$ with $H\leq O^+(\Lambda)$ symplectic finite subgroup extending $H^\#$ as in \Cref{short exact sequence}. 
We prove \Cref{thm: main thm for OG10 intro} in \Cref{subsec: results for BHH approach} --- our classification is contained in the folder ``data" of the dataset \cite{database}.

\subsection{Brandhorst--Hashimoto--Hofmann extension approach}\label{subsec:BHH extension}
In what follows, we adapt the \emph{extension approach} as described in \cite{brandhorst_hashimoto,bh22} to our context. We do not expect that the reader is familiar with the content of the previously cited works. Our presentation of what follows is self contained.

Let again $\Lambda:= U^3\oplus E_8^2\oplus A_2$. Let $H\leq O^+(\Lambda)$ be finite such that $H^\#$ is nontrivial and $H\neq H^\#$. Note that $\Lambda$ is unique in its genus, and for any lattice $\Lambda'\cong \Lambda$, we denote by $\markman(\Lambda')$ to be the set $\varphi(\markman)$ for any isometry $\varphi\colon \Lambda\xrightarrow{\cong}\Lambda'$.
We have seen that there is an exact sequence (\ref{short exact sequence})
\[1\to H^\#\to H\to \mu_2\to 1\]
where the character $H\to \mu_2$ is induced by the discriminant representation $H\to O(D_\Lambda)$. In particular, $[H:H^\#] = 2$ and there exists a nonstable isometry $h\in H\setminus H^\#$ such that $H/H^\#$ is generated by $hH^\#$. The restriction $a$ of $h$ to $\Lambda^{H^\#}$ has order $2$, except when $H^\#$ is not saturated in $O^+(\Lambda)$ and $\Lambda^{H^\#} = \Lambda^{H}$, in which case $a = \textnormal{id}$ (see \Cref{possible saturations}). 
In both cases, we observe that the definition of $a\in O(\Lambda^{H^\#})$ does not depend on a choice of a representative of $hH^\#$. However, for any $h'\in hH^\#$, the restriction $b$ of $h'$ to $\Lambda_{H^\#}$ does depend on the choice of $h'$, and it determines an equivariant primitive extension $(\Lambda^{H^\#}, a)\oplus (\Lambda_{H^\#}, b)\subseteq (\Lambda, h')$ with associated glue map $\gamma$.

\begin{remark}
    Even though the equivariant primitive extension 
    \[(\Lambda^{H^\#}, a)\oplus (\Lambda_{H^\#}, b)\subseteq (\Lambda, h')\]
    depends on the choice of $h'$, the associated glue map $\gamma$ does not depend on any $h'\in hH^\#$ or even on $a$. Indeed, $\gamma$ is the glue map of the primitive extension $\Lambda^{H^\#}\oplus \Lambda_{H^\#}\subseteq \Lambda$.
\end{remark}

\begin{definition}\label{heart for symplectic}
    Let $H$ be a symplectic finite subgroup of $O^+(\Lambda)$. We define:
    \begin{enumerate}
        \item the \textbf{heart} of $H$ to be the stable symplectic lattice $\Lambda_{H^\#}$;
        \item the \textbf{head} of $H$ to be the pair $(\Lambda^{H^\#}, a)$;
        \item the \textbf{spine} of $H$ to be the glue map $\gamma$.
    \end{enumerate}
\end{definition}

\begin{remark}
    For a finite subgroup $H\leq O^+(\Lambda)$, if $H$ is saturated in $O^+(\Lambda)$ then $H^\#$ is saturated $O^{+, \#}(\Lambda)$ (\Cref{sat implies stab sat}). In particular, the group $H^\#$ is completely characterised by the primitive sublattice $\Lambda_{H^\#}\subseteq \Lambda$, and it is equal to $O^\#(\Lambda_{H^\#})$ (\Cref{lem: stably sat}). Moreover, the sublattice $\Lambda_{H^\#}\subseteq \Lambda$ is a heart, as defined in \Cref{defn: heart }.
\end{remark}

\begin{proposition}\label{same chara have same extension data2}
    Let $H_1,H_2\leq O^+(\Lambda)$ be two symplectic finite subgroups and let $\psi\in O^+(\Lambda)$ conjugate $H_1$ and $H_2$. Then:
    \begin{enumerate}
        \item $\psi$ restricts to an isometry between the respective hearts of $H_1$ and $H_2$;
        \item $\psi$ induces an isomorphism between the respective heads of $H_1$ and $H_2$.
    \end{enumerate}
\end{proposition}

 \begin{proof}
     The first claim follows by definition of $\psi$, and the second claim follows from the fact that the definition of the heads does not depend on the choice of a representative for the generators of $H_1/H_1^\#$ and $H_2/H_2^\#$ respectively. 
\end{proof}

From \Cref{same chara have same extension data2}, one sees that two conjugate symplectic finite subgroups of $O^+(\Lambda)$ share the same heart and head. We would like to measure to which extent the converse holds, and describe an algorithm to compute representatives of conjugacy classes of such groups with given heart and head.

\begin{definition}\label{defin: head}
   Let $C\subseteq \Lambda$ be a heart, and let $F := C^\perp_\Lambda$. A \textbf{head} of $C$ is a pair $(F,a)$ where the isometry $a\in O(F)$ has order $n=1,2$, and $F_a$ is negative definite or trivial. We call $n\in \{1,2\}$ the order of the head.
\end{definition}

Now, for each head $(F,a)$ of a given heart  $C$, we would like to classify isomorphism classes of equivariant primitive extensions
\begin{equation}\label{eq:eqprimext}
    (F,a)\oplus (C,b)\subseteq (\Lambda_\gamma, h), \text{ where } \Lambda_\gamma\cong \Lambda \text{ and } D_h \neq \textnormal{id},
\end{equation}
such that $b\notin O^\#(C)$ if $a=\id_F$. Indeed, we know that the extension of $O^\#(C)$ with the identity on $F$ is stable.

\begin{definition}\label{defin:spine}
    Let $C$ be a heart and let $(F, a)$ be a head of $C$ of order $n\in\{1,2\}$. A \textbf{spine} between $C$ and $(F,a)$ is a glue map 
    \[D_F\geq H_F \xrightarrow{\gamma} H_C\leq D_C\] 
    such that
    \begin{enumerate}
        \item $D_aH_F \leq H_F$;
        \item there exists $b\in O(C)$ such that $b\notin O^\#(C)$ if $a = \textnormal{id}_F$, $D_bH_C\leq H_C$ and $\gamma$ is $(a, b)$-equivariant;
        \item the equivariant primitive extension $(F, a)\oplus (C, b)\subseteq (\Lambda_\gamma, h)$ is such that
        \begin{enumerate}
            \item $\Lambda_\gamma \cong \Lambda$;
            \item $h\in O^+(\Lambda_\gamma)\setminus O^\#(\Lambda_\gamma)$;
            \item $(\Lambda_\gamma)_{H_\gamma}\cap\mathcal{W}^{pex}(\Lambda_\gamma) = \varnothing$ where $H_\gamma := \langle O^\#(C),\, h\rangle$.
        \end{enumerate}
    \end{enumerate}
    If it exists, we call the isometry $b$ a \textbf{companion} of $a$ along $\gamma$.
\end{definition}

It is not hard to see that the previous definitions are such that the heart $C := \Lambda_{H^\#}$ of any symplectic finite subgroup $H\leq O^+(\Lambda)$ is a heart and its head $(\Lambda^{H^\#}, a)$ is a head of $C$. 

For two lattices $L_1, L_2$ and two groups of isometries $G_i\leq O(L_i)$ ($i=1,2$), we call the pairs $(L_1, G_1)$ and $(L_2, G_2)$ \textbf{conjugate} if there exists an isometry $\psi\colon L_1\to L_2$ such that $G_2 = \psi G_1\psi^{-1}$. The following holds.

\begin{theorem}\label{th:BHP}
    Let $C$ be a heart and let $(F, a)$ be a head of $C$ of order $n\in\{1,2\}$. Let $D_F\geq H_F \xrightarrow{\gamma} H_C\leq D_C$ be a spine between $C$ and $(F,a)$, and let $b\in O(C)$ be a companion of $a$ along $\gamma$. Then
    \begin{enumerate}
        \item $H_\gamma/O^\#(C)$ is a cyclic group of order 2;
        \item $H_\gamma\leq O^+(\Lambda_\gamma)$ is symplectic and $\overline{H_\gamma} \leq O(D_{\Lambda_\gamma})$ has order 2;
        \item the definition of $H_\gamma$ is independent of the choice of $b$;
        \item the conjugacy class of $(\Lambda_\gamma, H_\gamma)$ is invariant under the action of $O(C)$ and $O(F, a)$.
    \end{enumerate}
\end{theorem}
\begin{proof}
    We let $h := a\oplus b$ where $a$ is an isometry of $F$ of order at most 2 and $b \notin O^\#(C)$ if $a = \textnormal{id}_F$.
    Since $O(D_{\Lambda_\gamma})$ has order 2, we have that $D_{h}^2$ is trivial. Thus $h^2 = \id_F\oplus b^2$ must lie in $O^\#(C)$ because $O^\#(C)$ is saturated in $O^{+,\#}(\Lambda_\gamma)$.
    By definition of $a$, we moreover know that $h$ has negative definite coinvariant sublattice, which implies that $h\in O^+(\Lambda_\gamma)$ by \Cref{neg def coinv}. 
    It follows that $H_\gamma := \langle h,\, O^\#(C)\rangle\leq O^+(\Lambda_\gamma)$ also has negative definite coinvariant sublattice, and by definition of $\gamma$, we obtain that $H_\gamma$ is symplectic (\Cref{lem: crit for bir symp eff}). 
    Moreover $O^\#(C) = H_\gamma^\#$ is normal in $H_\gamma$, so we have that $H_\gamma/O^\#(C)=\langle hO^\#(C)\rangle $ has order 2.
    
    Now, if one had chosen $b\neq b'\in O(C)$ such that $H_C$ is $D_{b'}$-stable and $\gamma \circ (D_a)_{\mid H_F} = (D_{b'})_{\mid H_C}\circ\gamma$, giving rise to an isometry $h'\in O(\Lambda_\gamma)$, then $h^{-1}h' = \textnormal{id}_F\oplus b^{-1}b'$ would act trivially on $F$ with  $D_{h^{-1}h'} = D_{h^{-1}}D_{h'}$ trivial. 
    Therefore $b' \in bO^\#(C)$ since $O^\#(C)$ is saturated in $O^{+, \#}(\Lambda_\gamma)$ and thus $\langle O^\#(C),\, h\rangle = \langle O^\#(C),\, h'\rangle$. 
    
    Finally, let $\psi\in O(F, a)$ and let $c\in O(C)$. 
    Then, for any spine 
    \[D_{F}\geq H_{F} \xrightarrow{\gamma_1} H_C\leq D_C\]
    between $C$ and $(F, a)$ with companion $b$ of $a$, the glue map
    \[D_{F}\geq D_{\psi}^{-1}H_F \xrightarrow{\gamma_2 := D_c\circ\gamma_1\circ D_{\psi}} D_cH_C\leq D_C\]
    is a spine between $C$ and $(F, a)$, and $cbc^{-1}$ is a companion of $a$ along $\gamma_2$. 
    By \cite[Corollary 1.5.2]{nikulin}, $\gamma_1$ and $\gamma_2$ indeed defines isomorphic equivariant primitive extensions and since $O^\#(C)$ is normal in $O(C)$, the corresponding resulting isometry $\psi^{-1}\oplus c\colon L_{\gamma_1}\xrightarrow{\cong}L_{\gamma_2}$ conjugates $\langle O^\#(C), a\oplus b\rangle\leq O^+(\Lambda_{\gamma_1})$ and $\langle O^\#(C), a\oplus cbc^{-1} \rangle\leq O^+(\Lambda_{\gamma_2})$. 
    The rest of the proof follows from the \Cref{defin:spine}.
\end{proof}

Hence, the definitions of hearts, heads and spines are consistent with \Cref{heart for symplectic}. The following Theorem gives a converse to \Cref{same chara have same extension data2}, and it is a stated in such a way that we can describe an effective algorithm out of it. 

\begin{theorem}\label{th:classth}
    Let $C$ be a heart, let $(F,a)$ be a head of $C$ of order $n\in\{1,2\}$ and let $S$ be the set of spines between $C$ and $(F, a)$. Then the double cosets in
    \[ O(C)\backslash S \slash O(F, a)\]
    are in bijection with the conjugacy class of pairs $(\Lambda_\gamma, H_\gamma)$ where $\Lambda_\gamma\cong \Lambda$ and $H_\gamma\leq O^+(\Lambda_\gamma)$ is a symplectic finite subgroup such that $H_\gamma^\#$ is saturated in $O^{+, \#}(\Lambda_\gamma)$, the subgroup $\#\overline{H_\gamma}\leq O(D_{\Lambda_\gamma})$ has order 2, the heart of $H_\gamma$ is isometric to $C$ and the head of $H_\gamma$ is isomorphic to $(F, a)$. 
\end{theorem}

\begin{proof}
    One directions follows from \Cref{th:BHP}, while the other is a direct consequence of \Cref{same chara have same extension data2}.
    The converse follows from  \Cref{same chara have same extension data2} and \Cref{th:BHP}.
\end{proof}

Based on \Cref{th:classth}, the strategy behind the extension approach can be summarised in the following way.
\begin{enumerate}
    \item Start with a given heart $C\subseteq \Lambda$, as classified in \Cref{propo: th 4.1 lattice}.
    \item Determine a set $\mathcal{F}_C(n)$ of representatives of isomorphism classes of heads $(F, a)$ of $C$ of given order $n=1,2$. Such pairs $(F, a)$ must satisfy that $F\cong C^\perp_\Lambda$, $a\in O(F)$ has order $n$ and $F_a$ is negative definite or trivial.
    \item For each heart $C$ and candidate head $(F, a)\in \mathcal{F}_C(n)$ of order $n=1,2$, compute and classify equivariant primitive extensions $(F,a)\oplus (C, b)\subseteq (\Lambda', h)$ where $\Lambda'\cong\Lambda$, $D_h\neq \id$ and where $b\notin O^\#(C)$ if $a=\id_F$.
\end{enumerate}
Step (3) is done by constructing spines representing each of the double cosets described in \Cref{th:classth}.

\begin{remark}
    From a technical point of view, the classification of hearts $C$ in step (1) has been completed in \Cref{sec: stable sympl iso}. The classification of lattices with isometry $(F, a)$ in step (2) has been solved algorithmically by Brandhorst and Hofmann in their work \cite{bh22}. Finally, the computation and classification of spines for step (3) can also be carried out systematically thanks to Nikulin's theory on primitive embeddings \cite{nikulin}. Both of these procedures have been implemented by the second author in \cite[QuadFormAndIsom]{oscar-book}.
\end{remark}

\subsection{Results and comments}\label{subsec: results for BHH approach}
We implement the extension procedure described in \Cref{th:classth} to the list of hearts of $\Lambda$ determined in \Cref{theo: sec 3 main}, and we obtain the following.

\begin{theorem}\label{th:table2}
    Let $\mathcal{H}$ be an $O^+(\Lambda)$-conjugacy class of symplectic finite subgroups $H\leq O^+(\Lambda)$. Then a representative of $\mathcal{H}$ is computable. Moreover, the folder    ``data" of the dataset \cite{database} contains representatives for each such conjugacy class.
\end{theorem}
\begin{proof}
    This follows from \Cref{th:BHP,th:classth}. The representatives contained in the dataset \cite{database} have been computed using  \Cref{alg:one,alg:two}, which have been implemented in Oscar \cite{oscar-book}. See \Cref{sec: algorithms} for more details on the algorithmic aspect of the theorem.
\end{proof}

From this, we can conclude our classification of finite subgroups of symplectic birational transformations for IHS manifolds of $OG10$ type.

\begin{proof}[Proof of \Cref{thm: main thm for OG10 intro}]
    The classification follows from \Cref{th:table2}. We obtain 934 pairs $(\Lambda',H)$ with $\Lambda'\cong \Lambda$ and $H^\#\leq O^{+, \#}(\Lambda')$ saturated. These are contained in the dataset \cite{database}. We extract 375 pairs $(\Lambda',H)$ with $H$ saturated in $O^+(\Lambda')$ using \Cref{rmk saturation}, which are described in Table 5 of the ancillary files. 
\end{proof}

According to the data compiled in the dataset \cite{database}, we obtain the largest cohomological actions (i.e. groups with nontrivial action on second cohomology) for IHS manifolds of known deformation type. Recall that a finite group $G\leq \Bir(X)$ is \textbf{mixed} if it contains both nontrivial symplectic and nonsymplectic birational transformations.

\begin{proposition}\label{theorem maximality}
    The largest cohomological actions for IHS manifolds of all known deformation types have order \textbf{6531840} in the symplectic case, and \textbf{39191040} in the mixed case.
\end{proposition}

\begin{proof}
    According to Tables 4 and 5 of the ancillary files, the largest symplectic finite subgroup $H\leq O^+(\Lambda)$ we have determined is isomorphic to a semidirect product $\text{PSU}(4,3)\rtimes C_2$ (Id ``163a.1"), and its order is $\#H=6531840$. Note that this group is saturated in $O^+(\Lambda)$. We prove that this group is maximal (in size) among all the finite groups of symplectic isometries of $H^2(X, \bZ)$ for $X$ an IHS manifold of known deformation type. 
    
    By similar arguments as the ones used in this paper, the maximal order in the $K3^{[n]}$ cases ($n\geq 1$) is bounded above by 491520 (\cite[Table 2]{H_hn_2016}). For an IHS manifold $X$ of $Kum_n$ type ($n\geq 2$) or $OG6$ type, the negative signature of $H^2(X, \bZ)$ is $k = 4$ or $k=5$ respectively. By the known bounds on the order of finite subgroups of $\GL_k(\bZ)$ for $k=4,5$, we know that the maximal orders in those cases is smaller than 200000.

    Let $H= \text{PSU}(4,3)\rtimes C_2\leq O^+(\Lambda).$ The invariant sublattice $F:= \Lambda^H$ has Gram matrix $\scriptscriptstyle{\begin{pmatrix}2&-1&0\\-1&2&0\\0&0&4\end{pmatrix}}$ in its standard basis. This lattice admits an order 6 isometry $a\in O(F)$ given by $$\scriptscriptstyle{\begin{pmatrix}0&-1&0\\1&1&0\\0&0&1\end{pmatrix}},$$
    with characteristic polynomial $\Phi_1\Phi_6$. Using \Cref{eq:egc}, one can show that there exists an isometry $b\in O(\Lambda_H)$ such that we have an equivariant primitive extension
    \[ (F, a)\oplus (\Lambda_H, b)\subseteq (\Lambda', c)\]
    where $\Lambda'\cong \Lambda$. Since $\Lambda_H$ is negative definite, such an isometry $b$ is effectively computable using an algorithm of Plesken--Souvignier \cite{ps97}.  The invariant sublattice $(\Lambda')^c$ has signature $(1, \ast)$ and the isometry $c\in O^+(\Lambda')$. Moreover, $N := (\ker\Phi_1(c)\Phi_6(c))^\perp_{\Lambda'}\subseteq F^\perp_{\Lambda'} = \Lambda_H$ satisfies that $N\cap \mathcal{W}^{pex}(\Lambda')$: in particular, similarly to \cite[Proposition 3.3]{bh22}, we can conclude that $c\in O^+(\Lambda')$ is effective and $\langle H, \,c\rangle$ is an effective finite subgroup of $O^+(\Lambda')$ (in the sense of \cite[Definition 3.2]{bh22}). In particular, by the Strong Torelli theorem and the surjectivity of the period map, $\langle H, \,c\rangle$ has a faithful action by birational transformations on an IHS manifolds of $OG10$ type. As before, we claim that the action of this group on cohomology has maximal order for all known deformation types. See the Notebook ``Maximal" of the dataset \cite{database} for the computational details.
\end{proof}

\begin{remark}
    The group $\text{PSU}(4,3)$ is a well-known group, and it appears to act faithfully on some other special objects. For instance, the authors in \cite{os24} show that $\text{PSU}(4,3)$ is the largest finite group acting symplectically on a supersingular K3 surface of Artin invariant 1. Another recent occurrence of this group is in \cite{yyz24} where it was shown that the automorphism group of the most symmetric sextic fourfold is a degree 2 extension of $\text{PSU}(4, 3)$ (such sextic fourfold has been known since Todd \cite{tod50}, and it is intrinsically related to the Coxeter--Todd lattice $K_{12}$, see \cite{cs93}).
\end{remark}

\section{Geometrical Interpretations}\label{sec: geo interp}
In this section, we provide geometric realisations for some of the groups listed in the classification of \Cref{sec: from stable sympl to sympl}. 
First, in \Cref{subsec: induced from a cubic fourfold}, we investigate the possible groups that can be induced from a cubic fourfold via the LSV construction of \cite{LSV}, \cite{sac2021birational}.  In particular, we prove:
\begin{theorem}\label{thm: geo realise LSV}
   Let $X$ be an IHS manifold of $OG10$ type and let $G\leq \Bir_s(X)$ be a finite group of symplectic birational transformations. Suppose that $\Lambda^G\cong U\oplus \Gamma$ holds, for some lattice $\Gamma$. Then there exists some smooth cubic fourfold $V$ and an embedding $j\colon G\hookrightarrow \Aut(V)$ such that:
    \begin{enumerate}
        \item either $G$ acts trivially on the discriminant group $D_\Lambda$, and $j(G)\leq \Aut_s(V)$;
        \item or $G=\langle G_s, \phi\rangle$ with $j(G_s)\leq \Aut_s(V)$, and $j(\phi)\in\Aut(V)\setminus\Aut_s(V)$ is antisymplectic.
    \end{enumerate}
    The pair $(G_s, \Lambda_{G_s})$ occurs in the classification of \cite{laza2019automorphisms}.

    Conversely, for any smooth cubic fourfold $V$, any LSV manifold $X_V$ associated to $V$ and any finite subgroup $G\leq \Aut(V)$ so that $[G:G_s]\leq 2$ holds, there is an embedding of $G$ into the group $\Bir_s(X_V)$ of symplectic birational transformations on $X_V$.
\end{theorem}
\begin{proof}
The proof follows from combining Propositions \ref{prop: split U}, \ref{prop: cubic to OG10} and \ref{prop: cubic to OG10 non-symp}.
\end{proof}

In \Cref{subsec: twisted LSV}, we use similar techniques of \cite{bg24} to investigate when a group of symplectic birational transformations of an IHS manifold of $OG10$ type is induced from a cubic fourfold via the twisted LSV construction of \cite{twistedLSV}. Such a group is isomorphic to a stable symplectic finite subgroup $H\leq O^{+,\#}(\Lambda)$ whose heart $\Lambda_H$ embeds primitively into the Leech lattice, as in \Cref{sec: stable sympl iso}; we list the Id's of the pairs from \Cref{tab: lovely table 45} that can be induced in this way in \Cref{cor: twisted LSV}.

Next, in \Cref{subsec: numerical k3}, we apply the criteria developed recently in \cite{felisetti2023ogrady} to investigate when a group of symplectic birational transformations of an IHS manifold of $OG10$ type is induced from an underlying $K3$ surface. Such manifolds are necessarily numerical moduli spaces; that is they are birational to a certain moduli space of sheaves on a $K3$ surface.

\subsection{LSV manifolds}\label{subsec: induced from a cubic fourfold}
Let $V\subset \bP^5$ be a smooth cubic fourfold. By the construction of \cite{LSV,sac2021birational} there exists an IHS manifold $X_V$ and a Lagrangian fibration $\pi:X_V\rightarrow (\bP^5)^{\vee}$ that compactifies the intermediate Jacobian fibration associated to $V$. Further, the manifold $X_V$ is an IHS manifold of $OG10$ type. We call such a manifold an \textbf{LSV manifold associated to V.} 

\begin{remark}
    Note that such a compactification $X_V$ may not be unique --- indeed, if the cubic fourfold $V\subset \bP^5$ is not general then uniqueness is not guaranteed. In particular, a cubic fourfold with nontrivial automorphism group is not Hodge general.
\end{remark}
We prove \Cref{thm: geo realise LSV} in a series of propositions. First, we establish the existence of cubic fourfolds with specified group actions. 

\begin{proposition}\label{prop: split U}
    Let $X$ be an IHS manifold of $OG10$ type and let $G\leq \Bir_s(X)$ be a finite group of symplectic birational transformations. Suppose that $\Lambda^G\cong U\oplus \Gamma$ holds, for some lattice $\Gamma$. Then there exists some smooth cubic fourfold $V$ and an embedding $j\colon G\hookrightarrow \Aut(V)$ such that:
    \begin{enumerate}
        \item either $G$ acts trivially on the discriminant group $D_\Lambda$, and $j(G)\leq \Aut_s(V)$;
        \item or $G=\langle G_s, \phi\rangle$ with $j(G_s)\leq \Aut_s(V)$, and $j(\phi)\in\Aut(V)\setminus\Aut_s(V)$ is antisymplectic.
    \end{enumerate}
    The pair $(G_s, \Lambda_{G_s})$ occurs in the classification of \cite{laza2019automorphisms}.
\end{proposition}

\begin{proof}
    This follows a similar strategy as \cite[Theorems 4.1 and 5.1]{marqOG10}. In what follows, we identify $G$ with a subgroup of $O(\Lambda)$ by fixing a marking of $X$.

    Let $U_1:=U$ be such that $\Lambda^G =U_1\oplus \Gamma\hookrightarrow \Lambda$, and denote by $L:=(U_1)^\perp_{\Lambda(-1)}$. 
    Then $L$ is an even lattice of signature $(20, 2)$ and (recalling that $ADE$ lattices are assumed negative definite), $$L\cong U^2\oplus E_8^2(-1)\oplus A_2(-1).$$ 
    The group $G$ restricts to $G\leq O(L)$ with $$L^G\cong \Gamma(-1) \text{ and } L_G\cong \Lambda_G(-1).$$ 
    We choose a weight 4 pure Hodge structure $H$ of $K3$ type on $L$, such that $H^{2,2}\cap L = \Lambda_G(-1)$.  
    In particular, $H^{3,1}\subset L^G$, and since $\Lambda_G\cap \markman=\varnothing$ we can apply the Global Torelli Theorem for cubic fourfolds (\cite{voisintorelli}, \cite[Prop 1.3]{ZHENG_2019}). 
    We obtain a smooth cubic fourfold $V$ with $H^4(V,\bZ)_{prim}\cong H$ as Hodge structures.

    First, we assume that $G$ acts trivially on $D_\Lambda$, and hence on $D_L.$  
    In order to conclude that $G$ embeds in $\Aut_s(V)$, we need to extend $G$ to a group of isometries of $H^4(V,\bZ)$ fixing the square of the hyperplane class $h^2\in H^4(V,\bZ)$.
    We have that $L\oplus \langle h^2\rangle\subset H^4(V,\bZ)$. Since $G$, seen as a subgroup of $O(L)$, is stable, we can extend $G$ with $\id_{\langle h^2\rangle}$ to an isometry group of $H^4(V,\bZ)$; it follows by the Torelli theorem for cubic fourfolds that $G$ embeds into $\Aut(V)$. 
    Further, in this case it follows that $G$ acts faithfully symplectically on $V$.

    Next, we assume that $G$ does not act trivially on $D_\Lambda$. 
    In other words, there exists a short exact sequence as in \Cref{short exact sequence}:
    $$1\rightarrow G^\#\rightarrow G\rightarrow \mu_2\rightarrow 1.$$ 
    We can conclude that $G=\langle G^\#, a\rangle$, where $a\in O(\Lambda)$ acts nontrivially on $D_\Lambda$.
    Here, $a$ has even order and $\Lambda^a$ splits $U_1$ as well, since $\Lambda_a$ is negative definite. 
    We can first embed $G^\#$ into $\Aut_s(V)$ as above. It remains to extend the isometry $a$.
    
    We restrict $a$ to $L$ to obtain an isometry of $L$, but acting nontrivially on the discriminant group $D_L$. As in the proof of \cite[Theorem 4.3]{marqOG10}, we can instead extend the isometry $-a\in O(L)$ to an isometry $b$ of $H^4(V,\bZ)$, acting trivially on $\langle h^2\rangle,$ and by the Torelli theorem we conclude that $b$ is induced by an automorphism of $V$, which we still denote by $b\in\Aut(V)$. However, this automorphism $b$ is now antisymplectic: the isometry $a$ acted trivially on $H^{3,1},$ hence $b$ acts by $-\textnormal{id}|_{H^{3,1}}$. The group $\langle G_s, b\rangle \cong G$ is thus a subgroup of $\Aut(V)$, satisfying the statement of the Proposition.

    Finally, since $G_s$ is a group of symplectic automorphisms of a cubic fourfold, the pair $(G_s,\Lambda_{G_s})$ is a Leech pair, and occurs in the classification of \cite{laza2019automorphisms}.
\end{proof}

Next, we investigate the birational transformations on an LSV induced from the associated cubic fourfold. It was observed in \cite[\S 3.1]{sac2021birational} (see also \cite{LPZ}) that any automorphism of the cubic fourfold $V$ induces a birational transformation of any associated LSV manifold $X_V$. Moreover, a symplectic automorphism of $V$ induces a symplectic birational transformation of $X_V$.

\begin{proposition}\label{prop: cubic to OG10}
Let $V\subset \bP^5$ be a smooth cubic fourfold, and $G\leq\Aut_s(V)$ be a finite subgroup of symplectic automorphisms of $V$. Then the group of symplectic birational transformations of any LSV manifold $X_V$ associated to $V$ contains a finite subgroup isomorphic to $G$.
\end{proposition}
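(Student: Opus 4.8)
The plan is to realise $H$ geometrically through the functoriality of the LSV construction, and then to check separately that the induced birational transformations are symplectic and that the resulting homomorphism $H\to \Bir(X_V)$ remains injective. First I would recall that the compactified intermediate Jacobian fibration is functorial in $V$: every $g\in \Aut(V)$ is induced by a projective linear transformation of $\bP^5$, hence acts on the dual space $(\bP^5)^{\vee}$ and on the family of hyperplane sections of $V$; it therefore acts on the relative intermediate Jacobian and extends to a birational self-map $\tilde g$ of any compactification $X_V$. This is precisely the observation recorded in \cite[\S 3.1]{sac2021birational} (see also \cite{LPZ}), and it furnishes a group homomorphism $\rho\colon \Aut(V)\to \Bir(X_V)$, which I would restrict to $H$.

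Next I would verify symplecticity. By \Cref{symplectic} it suffices to check that $\tilde g^{*}$ fixes $\sigma_{X_V}$. The comparison of Hodge structures of \cite{sac2021birational}, already used in the proof of \Cref{prop: Numerical LSV}, provides an isomorphism of rational Hodge structures $T(X_V)\simeq T(V)(-1)$ which is equivariant for the $g$-action and under which $\bQ\cdot\sigma_{X_V}$ corresponds to the line $H^{3,1}(V)$. Since $g$ is a symplectic automorphism of the cubic, it acts trivially on this one-dimensional space by definition, so $\tilde g^{*}\sigma_{X_V}=\sigma_{X_V}$ and $\tilde g\in \Bir_s(X_V)$. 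Hence $\rho(H)\leq \Bir_s(X_V)$, and $\rho(H)$ is automatically finite as a quotient of the finite group $H$.

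Finally I would establish injectivity. Since the monodromy representation $\eta_{\ast}\colon \Bir(X_V)\to O(\Lambda)$ is faithful, it is enough to show that $\eta_{\ast}\circ\rho$ is injective on $H$. Here I would use that $\tilde g^{*}$ fixes both the relative theta divisor $\Theta$ and the class $b$ pulled back from the hyperplane class on the base (both being canonically attached to the fibration $\pi$, which $g$ preserves up to the induced linear action on $(\bP^5)^{\vee}$). Thus $\tilde g^{*}$ acts trivially on $U=\langle\Theta,b\rangle$ and acts on $U^{\perp}\subset\Lambda$, isometric to $H^4(V,\bZ)_{prim}(-1)$, exactly as $g^{*}$ acts on $H^4(V,\bZ)_{prim}$, again via the equivariant Hodge comparison. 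As $\Aut(V)$ acts faithfully on $H^4(V,\bZ)_{prim}$ by the Torelli theorem for cubic fourfolds \cite{voisintorelli}, this restriction is injective, whence $\eta_{\ast}\circ\rho|_H$ is injective and $\rho(H)\cong H$.

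The main obstacle is the third step: one must identify the induced isometry of $\Lambda$ precisely enough to conclude triviality on the canonical copy of $U$ and agreement with the cubic action on $U^{\perp}$ at the \emph{integral} level, not merely rationally, so that injectivity (rather than injectivity modulo a finite kernel) follows. This rests on the explicit description of $H^2(X_V,\bZ)$ and its relation to $H^4(V,\bZ)_{prim}$ from \cite{LSV,sac2021birational}. One should also note that the conclusion is insensitive to the choice of (possibly non-unique) compactification $X_V$, since $\eta_{\ast}$ depends only on the birational class; the lattice-theoretic consistency of the outcome is guaranteed by the Remark following \Cref{th:radu}, which shows $(H, L_H(-1))$ is a Leech pair satisfying its criterion.
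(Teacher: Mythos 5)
Your proposal is correct and follows essentially the same route as the paper: the induced action on the universal family of hyperplane sections gives birational transformations of $X_V$ preserving $\langle\Theta,\pi^*\calO_{\bP^5}(1)\rangle\simeq U$, and symplecticity comes from the Hodge-theoretic comparison with $V$ (the paper simply cites \cite[Lemma 3.2]{sac2021birational} for this step). Your explicit verification of injectivity via faithfulness of $\Aut(V)$ on $H^4(V,\bZ)_{prim}$ is a detail the paper leaves implicit, but it does not change the argument.
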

\begin{proof}
    This is essentially the argument of \cite[\S 3.1]{sac2021birational} and \cite[Theorem 4.3]{marqOG10} --- we include the proof for completeness.

The automorphism group $G$ acts on the universal family of hyperplane sections of $V$, and thus on the Donagi--Markman fibration $\calJ_U\rightarrow U$, where $U\subset (\bP^5)^{\vee}$ parametrises smooth hyperplane sections of $V$.
On a compactification, we obtain a group of birational transformations $G\leq \Bir(X_V)$ that leaves the sublattice $\langle \Theta, \pi^*\calO_{\bP^5}(1)\rangle\cong U$ invariant (here $\Theta$ is the relative theta divisor). 
Since $G$ acts symplectically on $V$, the group $G\leq\Bir(X_V)$ acts symplectically on $X_V$ by \cite[Lemma 3.2]{sac2021birational}. 
\end{proof}

Due to the presence of the Lagrangian fibration $\pi$ which always admits a section, an LSV manifold $X_V$ always admits a birational involution $\tau\in \Bir(X_V)$ that acts  by $x\mapsto -x$ on the smooth fibers of $\pi$.

\begin{remark}
    Note that $\tau\in\Bir(X_V)$ is moreover antisymplectic since it preserves the Lagrangian fibration and acts by $-1$ on the $H^1$ of a smooth fiber of $\pi$ (compare with \cite[Proposition 3.1]{FMOS}).
\end{remark}

In \cite{marqOG10}, we prove that an antisymplectic involution $\phi\in \Bir(X_V)$ induced from an antisymplectic involution of the underlying cubic fourfold $V$ could be composed with $\tau$, producing a symplectic birational involution $\phi\circ\tau\in \Bir_s(X_V)$ which is nonstable. 

\begin{remark}
    Note that $\tau\in \Bir(X_V)$ commutes with all transformations induced from $\Aut(V)$. Indeed, $\tau$ acts trivially on the base $(\bP^5)^\vee$, whereas any automorphism induced from $f\in \Aut(V)$ will either permute fibers, or induce an automorphism of an invariant fiber that will commute with $\tau$.
\end{remark}

\begin{proposition}\label{prop: cubic to OG10 non-symp}
    Let $V\subset \mathbb{P}^5$ be a smooth cubic fourfold, and $G\leq \Aut(V)$ be a finite subgroup of automorphisms whose symplectic subgroup $G_s := G\cap\Aut_s(V)$ has index 2 in $G$. Then the group of symplectic birational transformations of any LSV manifold $X_V$ associated to $V$ contains a finite subgroup isomorphic to $G$.
\end{proposition}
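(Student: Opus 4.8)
The plan is to construct an explicit injective homomorphism $\Phi\colon G\to\Bir_s(X_V)$ obtained by twisting the action induced by $G$ on $X_V$ with the LSV involution $\tau$ of \Cref{lem: tau} along the non-symplectic coset. Since $G_s$ has index $2$ in $G$ it is normal, and the quotient map defines a character $\epsilon\colon G\to G/G_s\cong\{0,1\}$; moreover every $g\in G\setminus G_s$ acts by $-1$ on $H^{3,1}(V)$ and is therefore anti-symplectic. Recall from \Cref{prop: cubic to OG10} and the discussion of \S\ref{subsec: LSV involution} that the LSV construction is functorial in $\Aut(V)$, yielding a homomorphism $\Aut(V)\to\Bir(X_V)$, $g\mapsto\hat g$, whose image fixes the distinguished copy of $U$ and acts trivially on $D_\Lambda$, with $\hat g$ symplectic exactly when $g$ is. I would then set
\[
\Phi(g):=\tau^{\epsilon(g)}\circ\hat g\in\Bir(X_V).
\]

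First I would check that $\Phi$ lands in $\Bir_s(X_V)$: for $g\in G_s$ this is clear since $\hat g$ is symplectic and $\tau^{\epsilon(g)}=\mathrm{id}$, while for $g\in G\setminus G_s$ both $\tau$ and $\hat g$ are anti-symplectic, so $(\tau\circ\hat g)^\ast\sigma_{X_V}=\hat g^\ast(-\sigma_{X_V})=\sigma_{X_V}$. Next I would verify that $\Phi$ is a homomorphism; using that $\tau$ commutes with every $\hat g$ (\Cref{lem: tau}), that $\tau^2=\mathrm{id}$, and that $\epsilon$ is a homomorphism, one computes
\[
\Phi(g)\Phi(h)=\tau^{\epsilon(g)}\,\hat g\,\tau^{\epsilon(h)}\,\hat h=\tau^{\epsilon(g)+\epsilon(h)}\,\widehat{gh}=\tau^{\epsilon(gh)}\,\widehat{gh}=\Phi(gh).
\]

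The heart of the argument is injectivity, which I would establish by separating the two cosets via the discriminant representation. Composing $\Phi$ with the faithful representation $\eta_\ast\colon\Bir(X_V)\to O(\Lambda)$ of \S\ref{subsec: bir transf} and then with $O(\Lambda)\to O(D_\Lambda)\cong\mu_2$, the isometry $\Phi(g)$ acts on $D_\Lambda$ by $(-\mathrm{Id})^{\epsilon(g)}$, since $\hat g$ is trivial there and $\tau$ acts by $-\mathrm{Id}$. Hence any $g\in\ker\Phi$ has $\epsilon(g)=0$, i.e. lies in $G_s$, where $\Phi$ restricts to $g\mapsto\hat g$. On $(U_1)^\perp_\Lambda\cong H^4(V,\mathbb{Z})_{prim}$ the isometry $\eta_\ast(\hat g)|_{(U_1)^\perp_\Lambda}$ agrees with the action of $g$ on primitive cohomology, and this action is faithful on $\Aut(V)$ by the Torelli theorem for cubic fourfolds; thus $\hat g=\mathrm{id}$ forces $g=e$, and $\Phi$ is injective with image a finite subgroup of $\Bir_s(X_V)$ isomorphic to $G$.

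I expect the only delicate points to be the functoriality bookkeeping — ensuring $g\mapsto\hat g$ is a genuine group homomorphism and that $\tau$ commutes with the entire induced action rather than with isolated involutions — together with the identification of $\eta_\ast(\hat g)|_{(U_1)^\perp_\Lambda}$ with the primitive cohomological action of $g$; both are supplied by \Cref{prop: cubic to OG10}, \Cref{lem: tau} and the proof of \Cref{prop: induced from cubic with trivial discr rep}. Everything else reduces to the three routine verifications above.
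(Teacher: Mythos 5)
Your proposal is correct and follows essentially the same route as the paper: induce $G$ on $X_V$ via the LSV construction and twist the non-symplectic coset by the involution $\tau$, using that $\tau$ is non-symplectic, squares to the identity, and commutes with all induced transformations. The paper phrases the conclusion as $\langle \widehat{G_s}, \tau\circ\widehat{g}\rangle \cong G$ for a single coset representative $g$, whereas you write the explicit homomorphism $\Phi$ and verify the homomorphism property and injectivity (via the discriminant representation and faithfulness on primitive cohomology) — a slightly more careful bookkeeping of the same argument.
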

\begin{proof}
    The proof is similar to the proof of \Cref{prop: cubic to OG10}, however it is done in two steps. We have an exact sequence
    $$1\rightarrow G_s\rightarrow G\rightarrow \mu_2\rightarrow 1,$$ where $G_s\leq \Aut_s(V).$ We apply \Cref{prop: cubic to OG10} to the group $G_s$, and see that $G_s\leq \Bir_s(X_V)$ for any LSV manifold $X_V$ associated to $V$.
    Next, we let $g\in G\setminus G_s$ such that the quotient $G/G_s$ is generated by $gG_s$. Then $g$ has even order, and we denote $\widehat{g}\in\Bir(X_V)$ the antisymplectic transformation induced by $g$. Then $\tau\circ\widehat{g}$ is a symplectic birational transformation of $X_V$ with order the order of $g$. We can conclude that $(\tau\circ\widehat{g})^2=\widehat{g}^2$ lies in $G_s\leq \Bir(X_V)$ and $G\cong\langle G_s, \tau\circ\widehat{g}\rangle\leq \Bir(X_V)$.
\end{proof}

 We see that as a consequence of \Cref{prop: cubic to OG10 non-symp}, the LSV construction allows to realise larger groups using the extra involution $\tau$.
We illustrate the phenomenon described in \Cref{prop: cubic to OG10 non-symp} with the two following examples:

\begin{example}[The Clebsch cubic]\label{ex: clebsch}
    Let $V=\{x_0^3+x_1^3+x_2^3+x_3^3+x_4^3+x_5^3-(x_0+x_1+x_2+x_3+x_4+x_5)^3=0\}\subset \bP^5$ be the Clebsch diagonal cubic fourfold.  This has $\Aut(V)=S_7$, with group of symplectic automorphisms $H=A_7$. In particular, $\Aut(V)/H\cong \bZ/2$, generated by the antisymplectic involution $\phi_1$ (in the notation of \cite{marquand2022cubic}) given in these coordinates by exchanging $x_0, x_1$. 

    Let $X_V$ be an associated LSV manifold and let $\tau$ be the LSV involution. By \Cref{prop: cubic to OG10 non-symp}, we see that $S_7\cong \langle \widehat{H}, \tau\circ\widehat{\phi_1}\rangle \leq\Bir_s(X_V)$. This case appears under the Id ``108a.1" in the dataset \cite{database} (see also Tables 4 and 5 in the ancillary files). 
    
    As a further remark, since $\tau\in \Bir(X_V)$ is a nonsymplectic involution, the group $\langle \widehat{H}, \tau, \widehat{\phi_1}\rangle$ is a subgroup of $\Bir(X_V)$ with mixed action, containing $S_7\leq \Bir_s(X_V)$ as a subgroup of index two.
\end{example}

\begin{example}[The Fermat cubic]\label{ex: fermat}
    Let $V=\{x_0^3+x_1^3+x_2^3+x_3^3+x_4^3+x_5^3=0\}\subset\bP^5$ be the Fermat cubic fourfold. By \cite{laza2019automorphisms}, the group of symplectic automorphisms of $V$ is isomorphic to $H= C_3^4\rtimes A_6$, and the quotient $\Aut(V)/H=\langle \phi_1H, \psi H\rangle$ has order $6$. Here, $\phi_1$ is the antisymplectic involution exchanging $x_0,x_1$, and $\psi$ is the order 3 nonsymplectic automorphism given by $x_0\mapsto \zeta_3 x_0$. 

    Let $X_V$ be an associated LSV manifold, and let $\tau$ be the LSV involution. By \Cref{prop: cubic to OG10 non-symp}, we have that $C_3^4\rtimes S_6\cong\langle \widehat{H}, \tau\circ\widehat{\phi_1}\rangle\leq\Bir_s(X_V)$. This case appears under the Id ``101a.1" in the dataset \cite{database} (see also Tables 4 and 5 in the ancillary files).
    
    Again, $\tau\in \Bir(X_V)$ is an antisymplectic involution, and $\psi\in \Aut(V)$ induces a nonsymplectic birational transformation $\widehat{\psi}\in \Bir(X_V)$ of order 3. The group $G':=\langle \widehat{H}, \tau, \widehat{\phi_1}, \widehat{\psi} \rangle$ is a subgroup of the full birational group $\Bir(X_V)$ with mixed action, and $G'$ contains $\langle \widehat{H}, \tau\circ\widehat{\phi_1}\rangle$ as a subgroup of index $6$ acting symplectically on $X_V$. The group $G'$ has order $349920=2\cdot174960$ and it is the largest finite group acting faithfully by birational transformations on IHS manifolds of $OG10$ type we have been able to geometrically realise.
    
    Note that the subgroup $\langle H, \phi_1. \psi\rangle$ of order 174960 acts on the Fano variety of lines $F(V)$ as shown in \cite{wawak}. The author realises this group as the largest finite group acting faithfully by regular automorphisms on an IHS manifold of $K3^{[2]}$ type. This was also obtained independently by \cite{comparin2023irreducible}.
\end{example}

\subsection{Twisted LSV manifolds}\label{subsec: twisted LSV}

Let $V\subset \bP^5$ be a smooth cubic fourfold. 
By the construction of \cite{twistedLSV}, there exists another IHS manifold $X_V^t$ of $OG10$ type, equipped with a Lagrangian fibration $\pi^t: X_V^t\rightarrow (\bP^5)^\vee$ whose fiber over a general hyperplane section $Y_H:=V\cap H$ is the torsor $\mathrm{Jac}^1(Y_H)$ parametrising degree 1 cycles. 
We call such a manifold $X_V^t$ a \textbf{twisted LSV manifold.} Any automorphism of $V$ induces a birational transformation of $X_V^t$ acting trivially on the discriminant group of $\Lambda$ (see for example \cite[Remark 5.4]{bg24}).
In \cite{bg24}, the authors investigate when an IHS manifold of $OG10$ type is birational to a twisted LSV manifold, and use their criterion to investigate when a nonsymplectic automorphism is induced from a cubic fourfold (in a similar manner to \ref{prop: cubic to OG10}). By a small adaption of the arguments in \cite[Proposition 5.2]{bg24}, one obtains the following proposition.

\begin{proposition}\label{prop: twisted LSV criteria}
    Let $X$ be an IHS manifold of $OG10$ type, and $G\leq \Bir_s(X)$ be a finite group with $H:=\eta_*(G)$ acting trivially $D_\Lambda$. Suppose that there is a primitive embedding $U(3)\hookrightarrow \Lambda^H$ such that the composition $U(3)\hookrightarrow \Lambda$ has glue domain $\bZ/3\bZ$. Then there exists a smooth cubic fourfold $V$ with $G\leq \Aut_s(V)$ such that $(H, \Lambda_H)$ occurs in the classification of \cite{laza2019automorphisms}. 
\end{proposition}
\begin{proof}
    Consider the lattice $L:=U(3)^\perp\subset \Lambda;$ this lattice is isometric to $U^2\oplus E_8(-1)^2\oplus A_2(-1)$. Note that $H:=\eta_*(G^\#)$ acts on $L$ with $L_H\cong \Lambda_H(-1)$. We follow the proof of \Cref{prop: split U} to obtain a smooth cubic fourfold $V$ with $H^4(V,\bZ)_{prim}$ Hodge isomorphic to the lattice $L$ with induced Hodge structure. Since $H$ acts trivially on the discriminant group, we extend $H$ to an isometry group of $H^4(V,\bZ)_{prim}$ as in \Cref{prop: split U} and conclude that $G\leq\Aut_s(V).$ It follows that the pair $(H, \Lambda_H)$ occurs in the classification of \cite{laza2019automorphisms}.
\end{proof}

\begin{corollary}\label{cor: twisted LSV}
    A symplectic finite subgroup $H\leq O^+(\Lambda)$ can be realised by an induced action on a twisted LSV manifold associated to a cubic fourfold $V$ if and only if $H$ is stable, and the \textnormal{Id} number, as listed in \Cref{tab: lovely table 45}, is one of the following:
    \begin{align*}
        \textnormal{Id}\in \{&1, 2, 3, 4b, 7b, 9, 13, 15b, 18b, 19b, 20, 29b, 31, 35b, 39b, 44b, 46b, 47c,\\& 52, 53, 55, 68b, 72b, 77, 82b, 84,85b, 87,101b, 108b, 109b, 119, 120, 128b \}.
    \end{align*} 
\end{corollary}
\begin{proof}
    Let $X$ be an IHS manifold of $OG10$ type with $G\leq \Bir_s(X)$ such that $H:=\eta_*(G)$ is one of the Ids above. Then by \Cref{prop: twisted LSV criteria} there exists a cubic fourfold $V$ with $G\leq \Aut_s(V).$ One then applies \cite[Proposition 5.6]{bg24} to obtain a twisted LSV manifold $X_V$ associated to $V$ with a compatible action of $G.$

    Conversely, if $H\leq O(\Lambda)$ is induced on a twisted LSV by a cubic fourfold it must act trivially on the discriminant group, and is thus stable. The only groups $H$ listed in \Cref{tab: lovely table 45} that satisfy the assumptions of \Cref{prop: twisted LSV criteria} are those listed above, verified by direct computation.
\end{proof}

\begin{remark}
Let $V\subset \bP^5$ be the Fermat cubic, and let $G=\Aut_s(V)\cong C_3^4\rtimes A_6.$ Then both $X_V$ and $X_V^t$ inherit a group of symplectic birational automorphisms isomorphic to $G$, but the action on the second cohomology is different. Namely, the action on $H^2(X_V,\bZ)$ is given by the entry 101a in \Cref{tab: lovely table 45}, whereas the action on $H^2(X_V^t, \bZ)$ is given by 101b. We point out that $X_V$ and $X_V^t$ are birational \cite[Remark 5.3]{bg24}, but the pairs $(X_V, G)$ and $(X_V^t,G)$ are not birational conjugate.

\end{remark}

\subsection{Numerical moduli spaces}\label{subsec: numerical k3}

Using the close connection between cubic fourfolds and manifolds of $OG10$ type, we have geometrically realised certain group actions of symplectic birational transformations. Of course, it is natural to ask the same question using automorphisms of $K3$ surfaces.

In \cite{felisetti2023ogrady}, the authors develop a criterion for when an IHS manifold of $OG10$ type is birational to a moduli space of sheaves on a $K3$ surface. Further, they provide another criterion for when a group of birational transformations of such a manifold is induced by a group of automorphisms of an underlying $K3$ surface. In this subsection, we will apply these criteria to our classification of saturated birational effective groups $G$. We begin by recalling the relevant definitions of \cite{felisetti2023ogrady}, and refer to them for more details.

Let $S$ be a $K3$ surface. Recall that the \textbf{Mukai lattice}  $\tilde{H}(S,\bZ)= H^0(S,\bZ)\oplus H^2(S,\bZ)\oplus H^4(S,\bZ)$. We have that $\tilde{H}(S,\bZ)\cong \Lambda_{24}:=U^4\oplus E_8^2$.

\begin{definition}\cite[Definition/Theorem 3.8]{felisetti2023ogrady}
    A \textbf{numerical moduli space} of $OG10$ type is a marked IHS manifold of $OG10$ type $(X,\eta)$ such that there exists a primitive class $\sigma\in H^{1,1}(X)$ with $\sigma^2=-6$ and $\divi_{\Lambda}(\eta(\sigma))=3$, and the Hodge embedding $\eta(\sigma)^\perp_\Lambda\hookrightarrow \Lambda_{24}$ embeds a copy of $U$ in $\Lambda_{24}^{1,1}$ as a direct summand.

    Equivalently, a numerical moduli space is an IHS manifold $X$ of $OG10$ type that is birational to $\tilde{M}_V(S,\Theta)$ for some $K3$ surface $S,$ a Mukai vector $v=2w$ where $w$ is primitive of square $2$, and a $v$-generic polarisation $\theta.$
\end{definition}
\begin{definition}
    Let $(X, \eta)$ be a marked IHS manifold of $OG10$ type and $G\leq\Bir(X)$ a finite subgroup. We say the group $G$ is:
    \begin{itemize}
        \item \textbf{$K3$-induced} if there exists a $K3$ surface $S$ with an injective group homomorphism $i:G\hookrightarrow \Aut(S)$, a $G$-invariant Mukai vector $v\in\tilde{H}(S,\bZ)^G$ and a $v$-generic polarisation $\theta$ on $S$ such that the action induced by $G$ on $\tilde{M}_v(S,\theta)$ via $i$ coincides with the given action of $G$ on $X$.
        \item \textbf{numerically $K3$-induced} if there exists a $G$-invariant class $\sigma\in \NS(X)$ with that $\sigma^2=-6$ and $\divi_{\Lambda}(\eta(\sigma))=3$ such that given the Hodge embedding $\eta(\sigma)_\Lambda^\perp\hookrightarrow \Lambda_{24}$, the induced action of $G$ on $\Lambda_{24}$ is such that the $(1,1)$-part of $\Lambda_{24}^G$ contains $U$ as a direct summand.
    \end{itemize}
\end{definition}
Notice that if a group $G\leq \Bir_s(X)$ is $K3$-induced, then its action on the discriminant group of $H^2(X,\bZ)$ is necessarily trivial. 

Let $\mathcal{S}$ denote the set of stable symplectic sublattices $C$ of the Leech lattice $\mathbb{L}$ which embeds primitively into the $K3$ lattice $\Lambda_{K3} := U^3\oplus E_8^2$ (i.e., those that occur in \cite[Table 10.2]{hashimoto}).
\begin{theorem}
    Let $(X, \eta)$ be a marked IHS manifold of $OG10$ type. Assume that $(X, \eta)$ is a numerical moduli space. Then a finite subgroup $G\leq\Bir_s(X)$ is $K3$-induced if and only if $\Lambda_{G}$ is isometric to a lattice in the set $\mathcal{S}.$ 
\end{theorem}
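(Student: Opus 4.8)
The plan is to prove each implication by reducing ``$K3$-induced'' to the lattice-theoretic statement that the $G$-action is realised by a symplectic automorphism group of a $K3$ surface through the Mukai lattice, and then to read off membership in $\mathcal{S}$ from Hashimoto's list \cite[Table 10.2]{hashimoto}. Throughout I would use the orthogonal splitting $\Lambda=\OG=A_2\oplus(E_8^2\oplus U^3)$, the Mukai lattice $\Lambda_{24}=U^4\oplus E_8^2\cong\widetilde H(S,\bZ)$ with its distinguished hyperbolic plane $U_0:=\langle H^0(S),H^4(S)\rangle\subseteq \Lambda_{24}^{1,1}$, and the elementary remark that a $\chi-$saturated $G$ acting trivially on $D_\Lambda$ has $\chi(G)=1$, hence equals $G^\#$ and is exactly a $\chi-$heart appearing in \Cref{tab:1}.

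For the forward implication, suppose $G\leq\Bir_s(X)$ is $\chi-$saturated and $K3-$induced. By \cite[Remark 6.9]{felisetti2023ogrady} the action of $G$ on $D_\Lambda$ is trivial, so $G$ is a $\chi-$heart. Writing $X$ birationally as $\widetilde M_v(S,\theta)$ with $v=2w$, $w^2=2$, the action of $G$ on $\Lambda$ is induced by a symplectic action on $\widetilde H(S,\bZ)\cong\Lambda_{24}$; since $G$ fixes $w$ one gets $\Lambda_G\cong \widetilde H(S,\bZ)_G=(E_8^2\oplus U^3)_G$, i.e. $(G,\Lambda_G)$ is the coinvariant datum of a symplectic automorphism group of a $K3$ surface. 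By \cite[Table 10.2]{hashimoto} this is precisely a pair carrying a label $(k)$, and the construction realises $i(\Lambda_G)$ inside $E_8^2\oplus U^3$, orthogonally to the $A_2$ summand; hence the induced $\chi-$heart is the one recorded in $\mathcal{S}$, so $(G,\Lambda_G)\in\mathcal{S}$. (The identification of the abstractly induced heart with the exact row of $\mathcal{S}$ is guaranteed by the uniqueness of $\chi-$saturated extensions of a fixed $\chi-$heart established in \Cref{th:classth}.)

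For the converse, assume $(G,\Lambda_G)\in\mathcal{S}$. By the forward discussion the defining embedding $i\colon\Lambda_G\hookrightarrow\Lambda$ is taken with $i(\Lambda_G)\subseteq E_8^2\oplus U^3$, so $A_2\subseteq\Lambda^G$. For a basis $e_1,e_2$ of $A_2$ with $e_i^2=-2$ and $e_1\cdot e_2=1$, the class $\sigma:=e_1-e_2\in A_2$ satisfies $\sigma^2=-6$ and $\sigma\cdot\Lambda=\sigma\cdot A_2=3\bZ$, hence $\mathrm{div}_\Lambda(\sigma)=3$, and it is $G-$invariant. After passing to a deformation of $(X,G)$ inside the $\Lambda_G$-polarised family so that $\langle\Lambda_G,\sigma\rangle\subseteq NS(X)$ (as in the proof of \Cref{prop: Numerical LSV}), which does not affect $K3-$inducedness in the sense of the classification, we may assume $\sigma\in NS(X)$. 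Then $\sigma^\perp_\Lambda=\langle e_1+e_2\rangle\oplus E_8^2\oplus U^3\cong w^\perp_{\Lambda_{24}}$ with $w^2=2$, which is exactly the Hodge embedding exhibiting $X$ as a numerical moduli space; under it the induced $G-$action on $\Lambda_{24}$ fixes $w$ together with the unimodular plane $U_0\subseteq\Lambda_{24}^{1,1}$, so $U_0$ splits off as a $(1,1)$ direct summand of $\Lambda_{24}^G$. Thus $G$ is numerically $K3-$induced, and \cite[Theorem 6.8]{felisetti2023ogrady} yields that $G$ is $K3-$induced.

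The delicate point, and the step I expect to be the main obstacle, is the converse: one must verify that the $G-$action induced on $\Lambda_{24}$ through the Hodge embedding of $\sigma^\perp_\Lambda$ is genuinely the symplectic $K3$ action, i.e. that $U_0$ is fixed, algebraic, and a direct summand of $\Lambda_{24}^G$. This rests on matching $\sigma^\perp_\Lambda$ (for $\sigma\in A_2$) with $w^\perp_{\Lambda_{24}}$ compatibly with both the Hodge structures and the $G-$action, and on the deformation argument forcing $\sigma\in NS(X)$ while keeping $G$ symplectic. Once these are secured, the statement follows formally from \cite[Theorem 6.8]{felisetti2023ogrady} and \cite[Remark 6.9]{felisetti2023ogrady} together with \cite[Table 10.2]{hashimoto}.
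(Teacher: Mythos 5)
Your forward implication follows the paper's route (trivial discriminant action forces $G$ to be a $\chi-$heart, and $\Lambda_G\cong H^2(S,\bZ)_G$ places the pair in Hashimoto's list), and that half is fine. The converse, however, has a genuine gap. The theorem is a statement about the \emph{given} $X$, which is assumed to be a numerical moduli space; being $K3-$induced means that the action of $G$ on this particular $X$ coincides with one induced on $\widetilde{M}_v(S,\theta)$. Your class $\sigma=e_1-e_2\in A_2\subseteq\Lambda^G$ has the right square and divisibility, but there is no reason for it to be algebraic on $X$: only $\Lambda_G$, not $\Lambda^G$, is guaranteed to lie in $\NS(X)$. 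Forcing $\sigma\in\NS(X)$ by deforming $(X,G)$ inside the $\Lambda_G$-polarised family replaces $X$ by a different manifold, and the assertion that this ``does not affect $K3-$inducedness'' is exactly what needs proof — a deformation of a $K3-$induced pair need not remain birational to a moduli space of sheaves on a $K3$ surface, and vice versa. In effect you never use the hypothesis that $X$ is a numerical moduli space, which should be a warning sign: that hypothesis is precisely what supplies an algebraic class $\sigma\in\NS(X)$ with $\sigma^2=-6$ and divisibility $3$. The paper's argument starts from \emph{that} $\sigma$ and shows it must be $G$-invariant, using that $\sigma\in\markman$ while $\Lambda_G\cap\markman=\varnothing$ by \Cref{lem: crit for bir symp eff}, and that the hyperbolic summand $U\subseteq\Lambda_{24}^{1,1}$ cannot meet the negative definite lattice $\Lambda_G$; then \cite[Theorem 6.8]{felisetti2023ogrady} applies to $X$ itself.

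A secondary problem, even granting the deformation: you only arrange $\langle\Lambda_G,\sigma\rangle\subseteq\NS(X)$, but for $U_0=\langle e_1+e_2,w\rangle^{\mathrm{sat}}$ to lie in the $(1,1)$ part of $\Lambda_{24}$ you also need $e_1+e_2$ to be algebraic, which your deformation does not ensure (and $e_1+e_2\in\Lambda^G$ is not orthogonal to $\sigma_X$ for free). Finally, the appeal to \Cref{th:classth} to identify the induced $\chi-$heart with a row of $\mathcal{S}$ is misdirected — that theorem classifies extensions of a fixed $\chi-$heart, not primitive embeddings $\Lambda_G\hookrightarrow\Lambda$; the relevant tool there is \Cref{nikulin classification}.
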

\begin{proof}
    If $G\leq \Bir_s(X)$ is a $K3$-induced group of symplectic birational transformations, by definition there exists a $K3$ surface $S$ with an injective group homomorphism $i:G\hookrightarrow \Aut(X)$. We use our classification of saturated groups of symplectic birational transformations. As the action of a $K3$-induced automorphism on the discriminant group is trivial, we can restrict to our classification of finite groups of stable symplectic isometries as in \Cref{sec: stable sympl iso}. 

    If an action is $K3$-induced, then it is numerically $K3$-induced \cite[Proposition 6.7]{felisetti2023ogrady} and in particular there exists a primitive invariant class $\sigma\in H^{1,1}(X,\bZ)$ with $\sigma^2=-6$ and divisibility three; i.e $\eta(\sigma)\in\Lambda^H$, where $H := \eta_\ast(G)$. After embedding $\eta(\sigma)^\perp_\Lambda\hookrightarrow \Lambda_{24}$ and extending the action of $H$, we obtain that $\Lambda_H = (\Lambda_{24})_H$. In particular, since $G$ is numerically $K3$-induced, we know that $H$ acts on $\Lambda_{K3} = U^\perp_{\Lambda_{24}}$ in such a way that $(\Lambda_{K3})_H = (\Lambda_{24})_H$. Hence $\Lambda_H$ is isometric to a lattice in $\mathcal{S}$. 

    For the converse, we let $G$ be a finite group of symplectic birational transformations of $X$ such that $\Lambda_H$ is isometric to a lattice in $\mathcal{S}$, where $H=\eta_*(G)$.
     We prove that $G$ is numerically $K3$-induced. Since by assumption $(X, \eta)$ is a numerical moduli space, there exists a class $\sigma\in \NS(X)$ with $\sigma^2=-6$, and $\divi_{\Lambda}(\eta(\sigma))=3$ such that, given the Hodge embedding $\eta(\sigma)_\Lambda^\perp\hookrightarrow \Lambda_{24}$, the lattice $\Lambda_{24}^{1,1}$ contains $U$ as a direct summand. Such a copy of $U$ must be contained in $\Lambda^H$, since $\Lambda_H$ is negative definite. Notice also that $\eta(\sigma)\in\markman(X)$; it follows from \Cref{lem: crit for bir symp eff} that $\eta(\sigma)\not\in\Lambda_H$. Thus $\eta(\sigma)$ must lie in $\Lambda^H$; in other words, it is invariant, and we see that $G$ is numerically $K3$-induced. The result follows from \cite[Theorem 6.8]{felisetti2023ogrady}
\end{proof}
\newpage

\appendix

\section{Exceptional stable symplectic sublattices of \texorpdfstring{$\bB$}{B}}\label{appendix: exceptional stable}
Each entry in \Cref{tab: exceptional} corresponds to an abstract isometry class of exceptional stable symplectic sublatticse $C$ of $\bB$ without $(-2)$-vectors and of rank at most 21. For each entry we give:
\begin{itemize}
    \item the rank of the lattice $C$;
    \item the length $l(D_C)$ of the discriminant group $D_C$ of $C$;
    \item the genus $g(C)$ of the lattice $C$;
    \item the order of the stable subgroup $O^\#(C)$ of isometries of $C$;
    \item a description of the group $O^\#(C)$, its Id in the Small Group Library \cite{small_grp_lib}, or nothing (\textemdash) if none of the previous are available.
\end{itemize}
Note that some of the lattices represented in \Cref{tab: exceptional} are in the same genus and have the same stable subgroup of isometries. However, the entries represent pairwise nonisometric lattices.

{
\small\centering\setlength{\tabcolsep}{1pt}
\renewcommand\arraystretch{1.5}
\rowcolors{1}{lightgray!40!white}{white}
\begin{longtable}{ccccc|ccccc}

\rowcolor{white}\caption{Exceptional stable symplectic sublattices of $\bB$ without $(-2)$-vectors}\label{tab: exceptional}\\
 $\text{rk}(C)$&$l(D_C)$&$g(C)$&$\#O^\#(C)$&$O^\#(C)$&$\text{rk}(C)$&$l(D_C)$&$g(C)$&$\#O^\#(C)$&$O^\#(C)$\\
\hline
\endfirsthead

\rowcolor{white}\caption[]{Exceptional stable symplectic sublattices of $\bB$ without $(-2)$-vectors (continued)}\\
 $\text{rank}(C)$&$l(D_C)$&$g(C)$&$\#O^\#(C)$&$O^\#(C)$&$\text{rank}(C)$&$l(D_C)$&$g(C)$&$\#O^\#(C)$&$O^\#(C)$\\
\hline
\endhead
16&10&$\II_{(0,16)}2^{10}_0$&2&$C_2$&20&6&$\II_{(0,20)}2^24^4_4$&128&$C_2\times D_8^2$\\
16&10&$\II_{(0,16)}2^84^2$&8&$C_2^3$&20&6&$\II_{(0,20)}2^24^4_4$&128&[128, 1135]\\
16&10&$\II_{(0,16)}2^{10}$&32&$C_2^5$&20&6&$\II_{(0,20)}2^24^4_4$&128&[128, 2216]\\
17&9&$\II_{(0,17)}2^88^1_7$&16&$C_2^4$&20&6&$\II_{(0,20)}2^63^2$&192&$C_2^3\times S_4$\\
18&7&$\II_{(0,18)}3^{-7}$&9&$C_3^2$&20&6&$\II_{(0,20)}2^{-2}4^{-4}_0$&256&[256, 29598]\\
18&8&$\II_{(0,18)}2^4_64^4$&4&$C_2^2$&20&6&$\II_{(0,20)}2^24^4_4$&256&[256, 56089]\\
18&8&$\II_{(0,18)}2^4_64^4$&8&$C_2^3$&20&6&$\II_{(0,20)}2^44^{-1}_38^{-1}_5$&256&[256, 53380]\\
18&8&$\II_{(0,18)}2^4_64^4$&8&$C_2\times C_4$&20&6&$\II_{(0,20)}2^{-4}4^{-2}$&768&[768, 1090235]\\
18&8&$\II_{(0,18)}2^{-4}4^{-4}_2$&16&$C_2^4$&20&6&$\II_{(0,20)}2^{-4}4^{-2}$&4096&\textemdash\\
18&8&$\II_{(0,18)}2^{-4}4^{-4}_2$&16&$C_2\times D_8$&21&4&$\II_{(0,21)}2^{-3}_73^4$&18&$C_3\times S_3$\\
18&8&$\II_{(0,18)}2^{-6}4^{-2}_2$&32&$C_2^5$&21&5&$\II_{(0,21)}2^2_24^1_18^2_0$&16&$C_2\times D_8$\\
18&8&$\II_{(0,18)}2^64^2_6$&32&$C_2^5$&21&5&$\II_{(0,21)}2^3_53^5$&18&$C_3\rtimes S_3$\\
18&8&$\II_{(0,18)}2^{-6}4^{-2}_2$&64&$C_2^3\times D_8$&21&5&$\II_{(0,21)}2^3_78^{-2}$&24&$C_2\times A_4$\\
19&7&$\II_{(0,19)}2^5_58^2$&8&$D_8$&21&5&$\II_{(0,21)}2^2_24^{-3}_1$&32&[32, 44]\\
19&7&$\II_{(0,19)}2^24^5_5$&8&$D_8$&21&5&$\II_{(0,21)}2^24^{-1}_58^{-2}_2$&32&$C_2^2\wr C_2$\\
19&7&$\II_{(0,19)}2^24^5_5$&16&$C_2^4$&21&5&$\II_{(0,21)}2^24^{-1}_38^{-2}_4$&32&$C_8\rtimes C_2^2$\\
19&7&$\II_{(0,19)}2^4_64^{-2}8^{-1}_3$&16&$C_2^4$&21&5&$\II_{(0,21)}2^{-3}_34^1_116^{-1}_3$&32&$C_8\rtimes C_2^2$\\
19&7&$\II_{(0,19)}2^{-4}_24^{-2}8^1_7$&16&$C_2\times D_8$&21&5&$\II_{(0,21)}2^4_48^1_7$&32&$2^{1+4}_-$\\
19&7&$\II_{(0, 19)}2^{-2}4^{-5}_1$&16&$C_2\times D_8$&21&5&$\II_{(0,21)}2^24^1_18^2_2$&64&$D_8^2$\\
19&7&$\II_{(0,19)}2^{-4}4^{-2}8^1_1$&32&$C_2^2\times D_8$&21&5&$\II_{(0,21)}2^{-2}_24^2_68^1_7$&64&$D_8^2$\\
19&7&$\II_{(0,19)}2^{-4}4^{-2}_28^1_7$&32&$C_2^2\times D_8$&21&5&$\II_{(0,21)}4^5_3$&64&$C_2^3\times D_8$\\
19&7&$\II_{(0,19)}2^44^{-2}_68^{-1}_3$&32&$C_2^2\wr C_2$&21&5&$\II_{(0,21)}2^24^{-1}_38^{-2}_4$&64&$C_2^3\times D_8$\\
19&7&$\II_{(0,19)}2^{-6}8^{-1}_5$&64&$C_2^6$&21&5&$\II_{(0,21)}2^4_616^{-1}_5$&64&$C_2\times C_8\rtimes C_2^2$\\
19&7&$\II_{(0,19)}2^44^3_5$&64&$C_2\times C_2^2\wr C_2$&21&5&$\II_{(0,21)}2^{-4}_216^{-1}_5$&64&[64, 124]\\
19&7&$\II_{(0,19)}2^{-6}8^{-1}_5$&128&$C_2^2\times C_2^2\wr C_2$&21&5&$\II_{(0.21)}2^{-2}_44^{-1}_38^{-2}_4$&64&[64, 134]\\
19&7&$\II_{(0,19)}2^44^3_5$&128&$C_2^2\times 2^{1+4}_+$&21&5&$\II_{(0,21)}2^3_78^{-2}$&64&[64, 134]\\
20&5&$\II_{(0,20)}5^5$&5&$C_5$&21&5&$\II_{(0,21)}2^3_78^{-2}$&64&[64, 211]\\
20&5&$\II_{(0,20)}3^{-4}9^1$&108&[108, 40]&21&5&$\II_{(0,21)}2^{-4}_28^1_73^1$&96&$C_2^2\times S_4$\\
20&6&$\II_{(0,20)}2^{-2}4^{-4}_0$&4&$C_4$&21&5&$\II_{(0,21)}2^48^1_73^2$&96&$C_2^2\times S_4$\\
20&6&$\II_{(0,20)}2^2_64^{-4}_6$&4&$C_2^2$&21&5&$\II_{(0,21)}2^{-2}4^1_78^{-2}_4$&128&$D_8\wr C_2$\\
20&6&$\II_{0,20)}2^{-6}_43^2$&6&$C_6$&21&5&$\II_{(0,21)}2^3_78^{-2}$&128&[128, 2317]\\
20&6&$\II_{(0,20)}2^{-4}_04^2$&16&$C_2\times Q_8$&21&5&$\II_{(0,21)}2^24^{-3}_13^{-1}$&192&$C_2\times C_2^2\rtimes S_4$\\
20&6&$\II_{(0,20)}2^{-4}_28^2_6$&16&$C_2\times D_8$&21&5&$\II_{(0,21)}2^24^{-2}_48^{-1}_3$&256&[256, 16883]\\
20&6&$\II_{(0,20)}2^4_68^2_6$&16&$C_4\bigcirc D_8$&21&5&$\II_{(0,21)}2^24^{-2}_28^{-1}_5$&256&[256, 16888]\\
20&6&$\II_{(0,20)}2^{-5}_116^1_7$&16&$C_4\bigcirc D_8$&21&5&$\II_{(0,21)}4^5_3$&256&[256, 25886]\\
20&6&$\II_{(0,20)}2^3_54^1_78^2$&16&$D_{16}$&21&5&$\II_{(0,21)}2^24^2_28^1_1$&256&[256, 51978]\\
20&6&$\II_{(0, 20)}2^{-2}3^{-6}$&18&$C_3\times S_3$&21&5&$\II_{(0,21)}2^24^2_28^1_1$&384&[384, 18235]\\
20&6&$\II_{(0,20)}2^24^3_58^1_7$&32&$C_2^5$&21&5&$\II_{(0,21)}2^24^3_3$&384&[384, 20089]\\
20&6&$\II_{(0,20)}2^{-2}4^3_78^{-1}_5$&32&$C_2^2\times D_8$&21&5&$\II_{(0,21)}2^{-2}4^{-3}_7$&384&[384, 20089]\\
20&6&$\II_{(0,20)}2^{-2}_64^{-4}_2$&32&$C_2^2\times D_8$&21&5&$\II_{(0,21)}2^48^{-1}_53^{-1}$&384&[384, 20100]\\
20&6&$\II_{(0,20)}2^4_68^{-2}_2$&32&$C_2^2\times D_8$&21&5&$\II_{(0,21)}2^{-2}4^{-2}_68^1_1$&512&\textemdash\\
20&6&$\II_{(0,20)}2^4_68^2_6$&32&[32, 34]&21&5&$\II_{(0,21)}2^24^3_3$&512&\textemdash\\
20&6&$\II_{(0, 20)}2^24^4_4$&48&$C_2\times S_4$&21&5&$\II_{(0,21)}2^{-4}8^1_13^1$&576&$S_4^2$\\
20&6&$\II_{(0,20)}2^24^4_4$&64&$C_2\times C_2^2\wr C_2$&21&5&$\II_{(0,21)}2^48^{-1}_53^{-1}$&768&[768, 1089108]\\
20&6&$\II_{(0,20)}2^24^4_4$&64&$C_2\times C_2^2\wr C_2$&21&5&$\II_{(0,21)}2^{-4}4^1_13^1$&768&[768, 1090213]\\
20&6&$\II_{(0,20)}2^24^4_4$&64&$C_2\wr C_2^2$&21&5&$\II_{(0,21)}2^{-2}4^{-3}_7$&1536&\textemdash\\
20&6&$\II_{(0,20)}2^24^{-3}_38^{-1}_5$&64&$C_2\wr C_2^2$&21&5&$\II_{(0,21)}2^24^3_3$&1536&\textemdash\\
20&6&$\II_{(0, 20)}2^{-4}8^{-2}_4$&64&$D_8^2$&21&5&$\II_{(0,21)}2^{-4}8^{-1}_3$&4608&\textemdash\\
20&6&$\II_{(0,20)}2^{-4}4^2_23^1$&96&$C_2^2\times S_4$&21&5&$\II_{(0,21)}2^{-2}4^{-3}_7$&8192&\textemdash\\
20&6&$\II_{(0,20)}2^{-4}4^2_63^{-1}$&96&$C_2\times C_2^2\rtimes A_4$&21&5&$\II_{(0,21)}2^{-4}8^{-1}_3$&24576&\textemdash\\
20&6&$\II_{(0,20)}2^{-4}4^1_78^{-1}_5$&128&$C_2^3\wr C_2$&&&&&\\
\hline
\end{longtable}}

\section{Isometry classes of potential exceptional hearts of \texorpdfstring{$\Lambda$}{Λ} }\label{App: exceptional hearts}
We display the Gram matrices for the exceptional stable symplectic sublattices of $\bB$ without $(-2)$-vectors which could potentially embed primitively into the lattice $\Lambda:=U^3\oplus E_8^2\oplus A_2$. These are the only lattices represented in \Cref{tab: exceptional} for which $\text{rank}(-)+l(D_-) \leq 25$.

\begin{center}
\resizebox{\textwidth}{!}{$E_{18} := \scriptscriptstyle{\begin{pmatrix}
-4 &  2 &  2 & -2 & -2 &  2 & -2 &  2 &  2 &  0 &  0 &  0 &  0 &  0 &  2 &  2 & -1 &  1 \\
 2 & -4 & -2 &  0 &  1 & -1 &  2 & -2 & -2 & -1 & -1 &  1 &  1 & -1 & -1 & -2 & -1 & -2 \\
 2 & -2 & -4 &  1 &  0 & -2 &  2 & -2 & -2 & -1 & -1 & -1 &  1 & -1 &  0 & -2 &  0 & -1 \\
-2 &  0 &  1 & -4 & -2 &  0 &  0 &  0 &  0 &  1 &  1 &  0 & -1 &  1 &  0 &  2 &  0 & -1 \\
-2 &  1 &  0 & -2 & -4 &  1 &  0 &  0 &  0 &  1 &  1 & -1 & -1 &  1 &  2 &  2 & -1 &  1 \\
 2 & -1 & -2 &  0 &  1 & -4 &  2 & -2 & -2 &  0 &  0 &  0 &  0 &  0 & -2 & -1 &  1 & -1 \\
-2 &  2 &  2 &  0 &  0 &  2 & -4 &  2 &  2 & -1 &  1 & -1 & -1 &  1 &  0 &  0 & -1 &  2 \\
 2 & -2 & -2 &  0 &  0 & -2 &  2 & -4 & -1 &  1 & -1 &  1 & -1 &  1 & -1 & -1 &  0 & -1 \\
 2 & -2 & -2 &  0 &  0 & -2 &  2 & -1 & -4 &  0 &  1 & -1 &  0 &  0 & -1 & -1 &  0 & -1 \\
 0 & -1 & -1 &  1 &  1 &  0 & -1 &  1 &  0 & -4 & -1 & -1 &  2 & -2 &  0 & -2 & -1 &  0 \\
 0 & -1 & -1 &  1 &  1 &  0 &  1 & -1 &  1 & -1 & -4 &  2 &  2 & -1 &  1 & -1 &  0 & -1 \\
 0 &  1 & -1 &  0 & -1 &  0 & -1 &  1 & -1 & -1 &  2 & -4 &  0 &  0 &  0 &  0 &  0 &  1 \\
 0 &  1 &  1 & -1 & -1 &  0 & -1 & -1 &  0 &  2 &  2 &  0 & -4 &  2 & -1 &  1 &  0 &  1 \\
 0 & -1 & -1 &  1 &  1 &  0 &  1 &  1 &  0 & -2 & -1 &  0 &  2 & -4 &  1 & -1 &  0 & -1 \\
 2 & -1 &  0 &  0 &  2 & -2 &  0 & -1 & -1 &  0 &  1 &  0 & -1 &  1 & -4 & -1 &  1 & -1 \\
 2 & -2 & -2 &  2 &  2 & -1 &  0 & -1 & -1 & -2 & -1 &  0 &  1 & -1 & -1 & -4 &  0 & -1 \\
-1 & -1 &  0 &  0 & -1 &  1 & -1 &  0 &  0 & -1 &  0 &  0 &  0 &  0 &  1 &  0 & -4 &  1 \\
 1 & -2 & -1 & -1 &  1 & -1 &  2 & -1 & -1 &  0 & -1 &  1 &  1 & -1 & -1 & -1 &  1 & -4 
\end{pmatrix}}$}
\end{center}

\begin{center}
\resizebox{\textwidth}{!}{$E_{20} := \scriptscriptstyle{\begin{pmatrix}
    -4 & -2 & -1 & 1 & -1 & -1 & -1 & -1 & -1 & -1 & 1 & -1 & -2 & -2 & -1 & -2 & 1 & 1 & -1 & 0 \\ 
-2 & -4 & 1 & 2 & 1 & -2 & 1 & 1 & 1 & 1 & -1 & -2 & -1 & -2 & -1 & 0 & -1 & -1 & 0 & -1 \\ 
-1 & 1 & -4 & -2 & -2 & 2 & 0 & -2 & -2 & -2 & 0 & 1 & 0 & 0 & 1 & -1 & 0 & 2 & -2 & -1 \\ 
1 & 2 & -2 & -4 & -2 & 2 & -1 & 0 & -2 & -2 & 1 & 2 & 0 & 0 & 0 & 1 & -1 & 2 & -1 & -1 \\ 
-1 & 1 & -2 & -2 & -4 & 0 & 0 & 0 & -1 & -1 & 2 & 2 & 0 & 0 & -1 & -1 & 0 & 2 & -2 & 0 \\ 
-1 & -2 & 2 & 2 & 0 & -4 & 1 & 2 & 2 & 2 & 1 & -1 & -1 & -1 & -2 & 0 & 0 & -1 & 0 & 1 \\ 
-1 & 1 & 0 & -1 & 0 & 1 & -4 & 0 & -1 & -1 & 1 & 1 & -2 & -1 & -1 & 0 & 1 & 1 & 0 & 1 \\ 
-1 & 1 & -2 & 0 & 0 & 2 & 0 & -4 & -2 & -2 & 0 & -1 & 1 & 1 & 2 & -2 & 1 & 1 & 0 & 0 \\ 
-1 & 1 & -2 & -2 & -1 & 2 & -1 & -2 & -4 & -2 & 1 & 0 & 0 & -1 & 1 & -1 & 1 & 1 & 0 & 0 \\ 
-1 & 1 & -2 & -2 & -1 & 2 & -1 & -2 & -2 & -4 & 0 & 1 & -1 & 0 & 1 & -1 & 0 & 2 & 0 & -1 \\ 
1 & -1 & 0 & 1 & 2 & 1 & 1 & 0 & 1 & 0 & -4 & 0 & 0 & 0 & 1 & 1 & -1 & -1 & 1 & -1 \\ 
-1 & -2 & 1 & 2 & 2 & -1 & 1 & -1 & 0 & 1 & 0 & -4 & 0 & -1 & 0 & 0 & 0 & -1 & 1 & 0 \\ 
-2 & -1 & 0 & 0 & 0 & -1 & -2 & 1 & 0 & -1 & 0 & 0 & -4 & -2 & -2 & 0 & 0 & 1 & 0 & 0 \\ 
-2 & -2 & 0 & 0 & 0 & -1 & -1 & 1 & -1 & 0 & 0 & -1 & -2 & -4 & -2 & 0 & 0 & 0 & 0 & 0 \\ 
-1 & -1 & 1 & 0 & -1 & -2 & -1 & 2 & 1 & 1 & 1 & 0 & -2 & -2 & -4 & 1 & 0 & 0 & 0 & 0 \\ 
-2 & 0 & -1 & 1 & -1 & 0 & 0 & -2 & -1 & -1 & 1 & 0 & 0 & 0 & 1 & -4 & 1 & 1 & -1 & 1 \\ 
1 & -1 & 0 & -1 & 0 & 0 & 1 & 1 & 1 & 0 & -1 & 0 & 0 & 0 & 0 & 1 & -4 & 0 & -1 & -1 \\ 
1 & -1 & 2 & 2 & 2 & -1 & 1 & 1 & 1 & 2 & -1 & -1 & 1 & 0 & 0 & 1 & 0 & -4 & 2 & 1 \\ 
-1 & 0 & -2 & -1 & -2 & 0 & 0 & 0 & 0 & 0 & 1 & 1 & 0 & 0 & 0 & -1 & -1 & 2 & -4 & 0 \\ 
0 & -1 & -1 & -1 & 0 & 1 & 1 & 0 & 0 & -1 & -1 & 0 & 0 & 0 & 0 & 1 & -1 & 1 & 0 & -4 
\end{pmatrix}}$}
\end{center}

\begin{center}
    \resizebox{\textwidth}{!}{$E_{21}:= \scriptscriptstyle{\begin{pmatrix}
    -4 & 2 & 2 & -1 & 2 & -2 & 2 & 1 & -1 & -2 & -2 & -2 & 1 & 0 & -2 & -2 & -1 & -2 & 2 & -2 & 2 \\ 
2 & -4 & -2 & 2 & -1 & 0 & 0 & 1 & 2 & 1 & 2 & 2 & 1 & -1 & 1 & 2 & 2 & 2 & -2 & 1 & -1 \\ 
2 & -2 & -4 & 2 & -2 & 1 & 0 & 1 & 0 & 0 & 2 & 1 & -1 & 1 & 0 & 1 & 0 & 0 & -1 & 2 & 0 \\ 
-1 & 2 & 2 & -4 & 0 & -1 & -1 & -1 & -1 & -1 & -2 & -1 & -1 & 1 & -1 & -2 & -1 & -1 & 1 & -2 & -1 \\ 
2 & -1 & -2 & 0 & -4 & 2 & -2 & 1 & -1 & 0 & 0 & 0 & -1 & 0 & 1 & 1 & -1 & 0 & -2 & 1 & -2 \\ 
-2 & 0 & 1 & -1 & 2 & -4 & 1 & 1 & 1 & -1 & -1 & -1 & 1 & 0 & -1 & -1 & 0 & 0 & 1 & -1 & 1 \\ 
2 & 0 & 0 & -1 & -2 & 1 & -4 & 0 & -1 & 0 & 0 & 1 & 0 & -1 & 2 & 1 & -1 & 1 & -1 & 0 & -2 \\ 
1 & 1 & 1 & -1 & 1 & 1 & 0 & -4 & 0 & 2 & 1 & 1 & -1 & 1 & 0 & 0 & 1 & 0 & 0 & 0 & -1 \\ 
-1 & 2 & 0 & -1 & -1 & 1 & -1 & 0 & -4 & -2 & -1 & -1 & 0 & 1 & 0 & -1 & -2 & -2 & 1 & -1 & 0 \\ 
-2 & 1 & 0 & -1 & 0 & -1 & 0 & 2 & -2 & -4 & -2 & -1 & 0 & 0 & -1 & -1 & -2 & -2 & 2 & -2 & 1 \\ 
-2 & 2 & 2 & -2 & 0 & -1 & 0 & 1 & -1 & -2 & -4 & -2 & 0 & 0 & -1 & -1 & -2 & -1 & 1 & -2 & 0 \\ 
-2 & 2 & 1 & -1 & 0 & -1 & 1 & 1 & -1 & -1 & -2 & -4 & -1 & 1 & -1 & -2 & -2 & -1 & 1 & 0 & 1 \\ 
1 & 1 & -1 & -1 & -1 & 1 & 0 & -1 & 0 & 0 & 0 & -1 & -4 & 2 & -1 & -1 & -1 & -1 & 0 & 1 & 0 \\ 
0 & -1 & 1 & 1 & 0 & 0 & -1 & 1 & 1 & 0 & 0 & 1 & 2 & -4 & 2 & 1 & 0 & 1 & -1 & 0 & -1 \\ 
-2 & 1 & 0 & -1 & 1 & -1 & 2 & 0 & 0 & -1 & -1 & -1 & -1 & 2 & -4 & -1 & 0 & -2 & 1 & -1 & 2 \\ 
-2 & 2 & 1 & -2 & 1 & -1 & 1 & 0 & -1 & -1 & -1 & -2 & -1 & 1 & -1 & -4 & -1 & -1 & 2 & -1 & 1 \\ 
-1 & 2 & 0 & -1 & -1 & 0 & -1 & 1 & -2 & -2 & -2 & -2 & -1 & 0 & 0 & -1 & -4 & -1 & 1 & 0 & 0 \\ 
-2 & 2 & 0 & -1 & 0 & 0 & 1 & 0 & -2 & -2 & -1 & -1 & -1 & 1 & -2 & -1 & -1 & -4 & 1 & -1 & 1 \\ 
2 & -2 & -1 & 1 & -2 & 1 & -1 & 0 & 1 & 2 & 1 & 1 & 0 & -1 & 1 & 2 & 1 & 1 & -4 & 2 & -2 \\ 
-2 & 1 & 2 & -2 & 1 & -1 & 0 & 0 & -1 & -2 & -2 & 0 & 1 & 0 & -1 & -1 & 0 & -1 & 2 & -4 & 0 \\ 
2 & -1 & 0 & -1 & -2 & 1 & -2 & -1 & 0 & 1 & 0 & 1 & 0 & -1 & 2 & 1 & 0 & 1 & -2 & 0 & -4
\end{pmatrix}}$}
\end{center}

\section{Conjugacy classes of hearts of \texorpdfstring{$\Lambda$}{Λ}}\label{app: table of hearts}

Each entry in \Cref{tab: lovely table 45} corresponds to an isomorphism class of hearts $C$ of $\Lambda$ (\Cref{sec: stable sympl iso}). For each entry, we give:
\begin{itemize}
    \item the label of the associated lattice $C$. If $C$ embeds primitively into the Leech lattice $\bL$, the Id corresponds to the one of the associated stable symplectic sublattice of $\bL$ as given in \cite[Table 2]{H_hn_2016}. Otherwise, if $C$ is exceptional, the Id corresponds to the name of the associated lattice in \Cref{App: exceptional hearts}. In the case where $\Lambda$ has several isomorphism classes of primitive sublattices abstractly isometric to $C$ (\Cref{main th embeddings}), we add letters to distinguish each class;
    \item a description of the group $O^\#(C)$, its Id in the Small Group Library \cite{small_grp_lib}, or its order;
    \item the genus of the invariant sublattice $\Lambda^{O^\#(C)}=C^\perp_\Lambda$, following the convention of \cite[Chapter 15]{splg};
    \item whether the group $O^\#(C)$, which is saturated in $O^{+, \#}(\Lambda)$, is saturated in $O^+(\Lambda)$.
\end{itemize} 

\begin{remark}
     For an IHS manifold $X$ of deformation type $OG10$, the \textbf{wall divisors} of $X$ which are not prime exceptional correspond to vectors in $H^{1,1}(X, \mathbb{R})\cap H^2(X, \mathbb{Z})$ which are of square $-4$, or of square $-24$ and divisibility $3$ \cite[Proposition 5.4]{mongardi2020birational}. Excluding the pairs 
     \[\{(194\text{a}, 194\text{b}), (200\text{b}, 200\text{c}), (203\text{a}, 203\text{b}), (208\text{b},208\text{c})\}\]
     all entries in the dataset \cite{database} are uniquely determined by
    \begin{enumerate}
        \item the isometry class of stable symplectic sublattice $C\subseteq \Lambda$;
        \item the isometry class of the orthogonal complement of $C^\perp_\Lambda$;
        \item the number of vectors of square $-4$ in $C$;
        \item the number of vectors of square $-24$ in $C$ which have divisibility 3 in $\Lambda$.
    \end{enumerate}
\end{remark}

{
\small\centering\setlength{\tabcolsep}{3pt}
\renewcommand\arraystretch{1.5}
\begin{longtable}{cccc|cccc}

\caption{Hearts of $\Lambda$}\\
 Id&$O^\#(C)$&$g(C^\perp_\Lambda)$&Saturated&Id&$O^\#(C)$&$g(C^\perp_\Lambda)$&Saturated\\
\hline
\endfirsthead

\caption[]{Hearts of $\Lambda$ (continued)}\\
 Id&$O^\#(C)$&$g(\Lambda_{\textnormal{OG10}}^H)$&Saturated&Id&$O^\#(C)$&$g(\Lambda_{\textnormal{OG10}}^H)$&Saturated\\
\hline
\endhead

\rowcolor{lightgray!40!white}1 & $C_1$ & $ \II_{(3, 21)}3^{1} $ & true & 108b & $A_7$ & $ \II_{(3, 1)}3^{-2}5^{1}7^{-1} $ & true \\
2 & $C_2$ & $ \II_{(3, 13)}2^{8}3^{1} $ & true & 109a & $[1944, 3559]$ & $ \II_{(3, 1)}2^{2}_{6}3^{2} $ & false \\
\rowcolor{lightgray!40!white}3 & $C_2^2$ & $ \II_{(3, 9)}2^{-6}4^{-2}3^{1} $ & true & 109b & $[1944, 3559]$ & $ \II_{(3, 1)}2^{2}_{6}3^{-4} $ & true \\
4a & $C_3$ & $ \II_{(3, 9)}3^{-5} $ & false & 110 & $[1920, 240993]$ & $ \II_{(3, 1)}4^{-1}_{3}8^{1}_{1}3^{1}5^{-1} $ & true \\
\rowcolor{lightgray!40!white}4b & $C_3$ & $ \II_{(3, 9)}3^{7} $ & true & 111 & $[1344, 11686]$ & $ \II_{(3, 1)}4^{2}_{6}3^{1}7^{-1} $ & true \\
5 & $C_2$ & $ \II_{(3, 9)}2^{-12}_{0}3^{1} $ & true & 112 & $[1152, 155478]$ & $ \II_{(3, 1)}8^{-2}_{2}3^{2} $ & true \\
\rowcolor{lightgray!40!white}6 & $C_2^3$ & $ \II_{(3, 7)}2^{6}4^{2}_{6}3^{1} $ & true & 114 & $[972, 812]$ & $ \II_{(3, 1)}2^{-2}3^{-3} $ & false \\
7a & $S_3$ & $ \II_{(3, 7)}2^{-2}3^{4} $ & false & 116 & $[768, 1090135]$ & $ \II_{(3, 1)}2^{2}_{2}8^{-2}_{6}3^{1} $ & true \\
\rowcolor{lightgray!40!white}7b & $S_3$ & $ \II_{(3, 7)}2^{-2}3^{-6} $ & true & 118a & $S_6$ & $ \II_{(3, 1)}2^{-2}3^{-1}5^{1} $ & false \\
9 & $[4, 1]$ & $ \II_{(3, 7)}2^{-2}_{2}4^{4}3^{1} $ & true & 118b & $S_6$ & $ \II_{(3, 1)}2^{-2}3^{3}5^{1} $ & true \\
\rowcolor{lightgray!40!white}10 & $C_2^4$ & $ \II_{(3, 6)}2^{6}8^{1}_{7}3^{1} $ & true & 119 & $M_{10}$ & $ \II_{(3, 1)}2^{-1}_{5}4^{1}_{1}3^{2}5^{1} $ & true \\
13 & $D_8$ & $ \II_{(3, 6)}4^{5}_{7}3^{1} $ & true & 120 & $L_2(11)$ & $ \II_{(3, 1)}3^{1}11^{2} $ & true \\
\rowcolor{lightgray!40!white}15a& $A_{3,3}$ & $ \II_{(3, 5)}3^{-3}9^{1} $ & false & 121a & $[576, 8654]$ & $ \II_{(3, 1)}4^{1}_{1}8^{1}_{7}3^{-1} $ & false \\
15b & $A_{3,3}$ & $ \II_{(3, 5)}3^{5}9^{1} $ & true & 121b & $[576, 8654]$ & $ \II_{(3, 1)}4^{1}_{1}8^{1}_{7}3^{3} $ & true \\
\rowcolor{lightgray!40!white}17 & $C_2\times D_8$ & $ \II_{(3, 5)}2^{2}4^{4}_{0}3^{1} $ & true & 122 & $[500, 23]$ & $ \II_{(3, 1)}3^{1}5^{3} $ & true \\
18a & $D_{12}$ & $ \II_{(3, 5)}2^{4}3^{-3} $ & false & 123a & $[384, 20097]$ & $ \II_{(3, 1)}2^{-2}_{6}4^{-2} $ & false \\
\rowcolor{lightgray!40!white}18b & $D_{12}$ & $ \II_{(3, 5)}2^{4}3^{5} $ & true & 123b & $[384, 20097]$ & $ \II_{(3, 1)}2^{-2}_{6}4^{-2}3^{-2} $ & true \\
19a & $A_4$ & $ \II_{(3, 5)}2^{-2}4^{-2}3^{-1} $ & false & 124 & $[384, 18134]$ & $ \II_{(3, 1)}2^{-3}_{7}16^{1}_{1}3^{1} $ & true \\
\rowcolor{lightgray!40!white}19b & $A_4$ & $ \II_{(3, 5)}2^{-2}4^{-2}3^{3} $ & true & 128a & $\Gamma\textnormal{L}_2(\mathbb{F}_4)$ & $ \II_{(3, 1)}3^{1}5^{-2} $ & false \\
20 & $D_{10}$ & $ \II_{(3, 5)}3^{1}5^{4} $ & true & 128b & $\Gamma\textnormal{L}_2(\mathbb{F}_4)$ & $ \II_{(3, 1)}3^{-3}5^{-2} $ & true \\
\rowcolor{lightgray!40!white}22 & $S_3$ & $ \II_{(3, 5)}3^{-7} $ & true & 129 & $C_2\times L_3(2)$ & $ \II_{(3, 1)}2^{2}3^{1}7^{2} $ & true \\
23 & $C_2^2$ & $ \II_{(3, 5)}2^{-4}_{6}4^{4}_{6}3^{1} $ & true & 131 & $[192, 1494]$ & $ \II_{(3, 1)}2^{2}_{2}8^{-2}_{6}3^{1} $ & true \\
\rowcolor{lightgray!40!white}24 & $C_2^2\rtimes A_4$ & $ \II_{(3, 4)}2^{-4}8^{1}_{7}3^{2} $ & true & 133 & $C_2\times M_9$ & $ \II_{(3, 1)}2^{-4}_{0}3^{1}9^{1} $ & true \\
26 & $C_2^2\wr C_2$ & $ \II_{(3, 4)}2^{2}4^{2}_{2}8^{1}_{7}3^{1} $ & true & 134 & $\Gamma\textnormal{L}_1(\mathbb{F}_9)$ & $ \II_{(3, 1)}2^{1}_{7}4^{1}_{7}3^{2}9^{1} $ & true \\
\rowcolor{lightgray!40!white}28 & $2_+^{1+4}$ & $ \II_{(3, 4)}4^{5}_{1}3^{1} $ & true & 137a & $S_5$ & $ \II_{(3, 1)}2^{-4}_{6}5^{-1} $ & false \\
29a & $S_4$ & $ \II_{(3, 4)}4^{3}_{5}3^{-1} $ & false & 137b & $S_5$ & $ \II_{(3, 1)}2^{-4}_{6}3^{-2}5^{-1} $ & true \\
\rowcolor{lightgray!40!white}29b & $S_4$ & $ \II_{(3, 4)}4^{3}_{5}3^{3} $ & true & 138 & $[108, 17]$ & $ \II_{(3, 1)}3^{-1}9^{2} $ & true \\
30 & $D_8$ & $ \II_{(3, 4)}2^{4}_{2}4^{-2}8^{1}_{7}3^{1} $ & true & 141a & $N_{72}$ & $ \II_{(3, 1)}2^{4}_{2}9^{1} $ & false \\
\rowcolor{lightgray!40!white}31 & $Q_8$ & $ \II_{(3, 4)}2^{3}_{5}8^{-2}3^{1} $ & true & 141b & $N_{72}$ & $ \II_{(3, 1)}2^{4}_{2}3^{-2}9^{1} $ & true \\
35a & $[486, 249]$ & $ \II_{(3, 3)}3^{4} $ & false & 143 & $[64,257]$ & $ \II_{(3, 1)}2^{1}_{1}4^{-1}_{3}8^{2}3^{1} $ & true \\
\rowcolor{lightgray!40!white}35b & $[486, 249]$ & $ \II_{(3, 3)}3^{-6} $ & true & 144 & $A_5$ & $ \II_{(3, 1)}2^{-2}3^{-3} $ & true \\
37 & $C_4^2\rtimes A_4$ & $ \II_{(3, 3)}2^{-2}8^{-2}_{2}3^{1} $ & true & 148a & $C_2\times S_4$ & $ \II_{(3, 1)}2^{-2}_{2}4^{-1}_{3}8^{1}_{1} $ & false \\
\rowcolor{lightgray!40!white}38 & $C_2^2\rtimes S_4$ & $ \II_{(3, 3)}2^{2}4^{-1}_{5}8^{1}_{7}3^{2} $ & true & 148b & $C_2\times S_4$ & $ \II_{(3, 1)}2^{2}_{0}4^{1}_{1}8^{1}_{1}3^{-2} $ & true \\
39a & $A_{4,3}$ & $ \II_{(3, 3)}4^{-2}3^{-2} $ & false & 149 & $C_2\times F_5$ & $ \II_{(3, 1)}2^{-4}_{0}3^{1}5^{2} $ & true \\
\rowcolor{lightgray!40!white}39b & $A_{4,3}$ & $ \II_{(3, 3)}4^{-2}3^{4} $ & true & 150 & $[36, 13]$ & $ \II_{(3, 1)}2^{-4}_{0}3^{-3} $ & false \\
41 & $[64, 266]$ & $ \II_{(3, 3)}2^{-2}_{6}4^{4}3^{1} $ & true & 151a & $S_3^2$ & $ \II_{(3, 1)}2^{4}_{6}3^{2} $ & false \\
\rowcolor{lightgray!40!white}43 & $\Gamma_{25}a_1$ & $ \II_{(3, 3)}4^{3}_{1}8^{1}_{1}3^{1} $ & true & 151b & $S_3^2$ & $ \II_{(3, 1)}2^{4}_{6}3^{2} $ & false \\
44a & $A_5$ & $ \II_{(3, 3)}2^{-2}5^{-2} $ & false & 151c & $S_3^2$ & $ \II_{(3, 1)}2^{4}_{6}3^{-4} $ & true \\
\rowcolor{lightgray!40!white}44b & $A_5$ & $ \II_{(3, 3)}2^{-2}3^{-2}5^{-2} $ & true & 152 & $C_2\times QD_{16}$ & $ \II_{(3, 1)}2^{2}_{0}4^{-1}_{5}16^{1}_{7}3^{1} $ & true \\
45a & $C_2\times S_4$ & $ \II_{(3, 3)}2^{-2}4^{-2}_{2}3^{-1} $ & false & 154a & $C_2^2\times S_3$ & $ \II_{(3, 1)}2^{2}_{0}4^{-2}_{4}3^{1} $ & false \\
\rowcolor{lightgray!40!white}45b & $C_2\times S_4$ & $ \II_{(3, 3)}2^{-2}4^{-2}_{2}3^{3} $ & true & 154b & $C_2^2\times S_3$ & $ \II_{(3, 1)}2^{2}_{0}4^{-2}_{4}3^{-3} $ & true \\
46a & $C_3^2\rtimes C_4$ & $ \II_{(3, 3)}2^{-2}_{2}3^{-1}9^{1} $ & false & 157a & $D_{24}$ & $ \II_{(3, 1)}2^{-2}_{2}4^{-2}3^{2} $ & false \\
\rowcolor{lightgray!40!white}46b & $C_3^2\rtimes C_4$ & $ \II_{(3, 3)}2^{-2}_{2}3^{3}9^{1} $ & true & 157b & $D_{24}$ & $ \II_{(3, 1)}2^{-2}_{2}4^{-2}3^{-4} $ & true \\
47a & $S_3^2$ & $ \II_{(3, 3)}2^{-2}3^{2}9^{1} $ & false & 161 & $D_{12}$ & $ \II_{(3, 1)}2^{-4}_{0}3^{-3} $ & true \\
\rowcolor{lightgray!40!white}47b & $S_3^2$ & $ \II_{(3, 3)}2^{-2}3^{2}9^{1} $ & false & 163a & $PSU(4,3)$ & $ \II_{(3, 0)}4^{1}_{1}3^{-1} $ & false \\
47c & $S_3^2$ & $ \II_{(3, 3)}2^{-2}3^{-4}9^{1} $ & true & 163b & $PSU(4,3)$ & $ \II_{(3, 0)}4^{1}_{1}3^{3} $ &  true \\
\rowcolor{lightgray!40!white}48 & $[32, 34]$ & $ \II_{(3, 3)}2^{-2}_{6}4^{4}3^{1} $ & true & 165a & $M_{22}$ & $ \II_{(3, 0)}4^{-1}_{3}3^{1}11^{-1} $ & true \\
52 & $C_7\rtimes C_3$ & $ \II_{(3, 3)}3^{1}7^{-3} $ & true & 165b & $M_{22}$ & $ \II_{(3, 0)}4^{-1}_{3}3^{1}11^{-1} $ & true \\
\rowcolor{lightgray!40!white}53 & $F_5$ & $ \II_{(3, 3)}2^{2}_{6}3^{1}5^{3} $ & true & 167a &  $PSU(3,5)$ & $ \II_{(3, 0)}2^{-1}_{5}3^{1}5^{-2} $ & true \\
55 & $QD_{16}$ & $ \II_{(3, 3)}2^{-1}_{5}4^{-1}_{5}8^{-2}3^{1} $ & true & 167b & $PSU(3,5)$ & $ \II_{(3, 0)}2^{-1}_{5}3^{1}5^{-2} $ & true \\
\rowcolor{lightgray!40!white}56 & $A_4$ & $ \II_{(3, 3)}2^{2}_{2}4^{-4}3^{1} $ & true & 169 & $\#58320$  & $ \II_{(3, 0)}2^{1}_{7}3^{2}9^{-1} $ & true \\
58 & $C_2^3$ & $ \II_{(3, 3)}2^{-2}_{4}4^{-3}_{3}8^{-1}_{3}3^{1} $ & true & 170 & $\#40320$  & $ \II_{(3, 0)}4^{-1}_{5}3^{2}7^{1} $ & true \\
\rowcolor{lightgray!40!white}59 & $C_2\times C_4$ & $ \II_{(3, 3)}2^{-4}_{0}8^{2}_{6}3^{1} $ & true & 171a & $\#40320$ & $ \II_{(3, 0)}2^{-3}_{7}3^{1}7^{1} $ & true \\
60a & $S_3$ & $ \II_{(3, 3)}2^{-6}_{6}3^{1} $ & false & 171b & $\#40320$  & $ \II_{(3, 0)}2^{-3}_{7}3^{1}7^{1} $ & true \\
\rowcolor{lightgray!40!white}60b & $S_3$ & $ \II_{(3, 3)}2^{-6}_{6}3^{-3} $ & true & 172 & $\#40320$  & $ \II_{(3, 0)}8^{1}_{7}3^{1}7^{-1} $ & true \\
61a & $S_3$ & $ \II_{(3, 3)}2^{-6}_{2}3^{3} $ & false & 175a &$A_8$& $ \II_{(3, 0)}4^{1}_{7}5^{1} $ & false \\
\rowcolor{lightgray!40!white}61b & $S_3$ & $ \II_{(3, 3)}2^{-6}_{2}3^{-5} $ & true & 175b & $A_8$ & $ \II_{(3, 0)}4^{1}_{7}3^{-2}5^{1} $ & true \\
63a & $C_6$ & $ \II_{(3, 3)}2^{6}_{0}3^{-2} $ & false & 178a & $\#11520$  & $ \II_{(3, 0)}2^{2}_{2}8^{1}_{1} $ & false \\
\rowcolor{lightgray!40!white}63b & $C_6$ & $ \II_{(3, 3)}2^{6}_{0}3^{4} $ & true & 178b & $\#11520$  & $ \II_{(3, 0)}2^{-2}_{6}8^{-1}_{5}3^{-2} $ & true \\
68a & $[972, 776]$ & $ \II_{(3, 2)}2^{1}_{7}3^{3} $ & false & 180 & $\#10752$  & $ \II_{(3, 0)}2^{2}_{2}16^{-1}_{3}3^{1} $ & true \\
\rowcolor{lightgray!40!white}68b & $[972, 776]$ & $ \II_{(3, 2)}2^{1}_{7}3^{-5} $ & true & 182 & $M_{11}$ & $ \II_{(3, 0)}2^{-1}_{5}3^{2}11^{1} $ & true \\
69 & $M_{20}$ & $ \II_{(3, 2)}2^{-2}8^{1}_{7}3^{1}5^{-1} $ & true & 183 & $\#5760$  & $ \II_{(3, 0)}8^{1}_{7}3^{2}5^{-1} $ & true \\
\rowcolor{lightgray!40!white}70 & $F_{384}$ & $ \II_{(3, 2)}4^{-1}_{5}8^{-2}_{2}3^{1} $ & true & 184 & $\#4608$  & $ \II_{(3, 0)}2^{1}_{1}8^{-2}3^{1} $ & true \\
72a & $A_6$ & $ \II_{(3, 2)}4^{-1}_{3}3^{-1}5^{1} $ & false & 186 & $\#3888$ & $ \II_{(3, 0)}4^{1}_{7}3^{2} $ & false \\
\rowcolor{lightgray!40!white}72b & $A_6$ & $ \II_{(3, 2)}4^{-1}_{3}3^{3}5^{1} $ & true & 187a & $\#3888$ & $ \II_{(3, 0)}2^{-3}_{1}3^{1} $ & false \\
73a & $A_{4,4}$ & $ \II_{(3, 2)}2^{2}8^{1}_{7}3^{-1} $ & false & 187b & $\#3888$ & $ \II_{(3, 0)}2^{-3}_{1}3^{-3} $ & true \\
\rowcolor{lightgray!40!white}73b & $A_{4,4}$ & $ \II_{(3, 2)}2^{2}8^{1}_{7}3^{3} $ & true & 191 & $[1944, 3536]$ & $ \II_{(3, 0)}2^{3}_{3}3^{-2} $ & false \\
74 & $H_{192}$ & $ \II_{(3, 2)}4^{-2}_{6}8^{1}_{7}3^{2} $ & true & 193a & $[1440, 5844]$ & $ \II_{(3, 0)}2^{2}_{0}4^{1}_{1}3^{1}5^{1} $ & true \\
\rowcolor{lightgray!40!white}76a & $T_{192}$ & $ \II_{(3, 2)}4^{-3}_{1} $ & false & 193b & $[1440, 5844]$ & $ \II_{(3, 0)}2^{2}_{0}4^{1}_{1}3^{1}5^{1} $ & true \\
76b & $T_{192}$ & $ \II_{(3, 2)}4^{-3}_{1}3^{-2} $ & true & 194a & $[1440, 5841]$ & $ \II_{(3, 0)}2^{-3}_{7}3^{2}5^{1} $ & true \\
\rowcolor{lightgray!40!white}77 & $L_2(7)$ & $ \II_{(3, 2)}4^{1}_{7}3^{1}7^{2} $ & true & 194b & $[1440, 5841]$ & $ \II_{(3, 0)}2^{-3}_{7}3^{2}5^{1} $ & true \\
80 & $[128, 1759]$ & $ \II_{(3, 2)}2^{2}_{2}4^{-2}8^{1}_{1}3^{1} $ & true & 197a & $[768, 1086051]$ & $ \II_{(3, 0)}2^{1}_{7}4^{1}_{1}16^{-1}_{5}3^{1} $ & true \\
\rowcolor{lightgray!40!white}82a & $S_5$ & $ \II_{(3, 2)}4^{-1}_{5}5^{-2} $ & false & 197b & $[768, 1086051]$ & $ \II_{(3, 0)}2^{1}_{7}4^{1}_{1}16^{-1}_{5}3^{1} $ & true \\
82b & $S_5$ & $ \II_{(3, 2)}4^{-1}_{5}3^{-2}5^{-2} $ & true & 200a & $S_6$ & $ \II_{(3, 0)}2^{2}_{6}4^{-1}_{3}3^{-1} $ & false \\
\rowcolor{lightgray!40!white}84 & $M_9$ & $ \II_{(3, 2)}2^{3}_{5}3^{2}9^{1} $ & true & 200b & $S_6$ & $ \II_{(3, 0)}2^{2}_{6}4^{-1}_{3}3^{3} $ & true \\
85a & $N_{72}$ & $ \II_{(3, 2)}4^{1}_{7}3^{-1}9^{1} $ & false & 200c & $S_6$ & $ \II_{(3, 0)}2^{2}_{6}4^{-1}_{3}3^{3} $ & true \\
\rowcolor{lightgray!40!white}85b & $N_{72}$ & $ \II_{(3, 2)}4^{1}_{7}3^{3}9^{1} $ & true & 201 & $\textnormal{AGL}_2(\mathbb{F}_3)$ & $ \II_{(3, 0)}2^{-1}_{5}3^{-1}9^{1} $ & true \\
87 & $T_{48}$ & $ \II_{(3, 2)}2^{1}_{1}8^{-2}3^{2} $ & true &203a & $C_2\times\textnormal{A}\Gamma\textnormal{L}_1(\mathbb{F}_9)$ & $ \II_{(3, 0)}2^{-2}_{2}4^{1}_{7}3^{1}9^{1} $ & true \\
\rowcolor{lightgray!40!white}89 & $\Aut(D_8)$ & $ \II_{(3, 2)}2^{-3}_{7}8^{2}3^{1} $ & true & 203b & $C_2\times\textnormal{A}\Gamma\textnormal{L}_1(\mathbb{F}_9)$ & $ \II_{(3, 0)}2^{-2}_{2}4^{1}_{7}3^{1}9^{1} $ & true \\
92a & $S_4$ & $ \II_{(3, 2)}2^{4}_{2}8^{1}_{7} $ & false & 205a & $C_2\times S_5$ & $ \II_{(3, 0)}2^{2}_{0}4^{-1}_{3}5^{-1} $ & false \\
\rowcolor{lightgray!40!white}92b & $S_4$ & $ \II_{(3, 2)}2^{4}_{2}8^{1}_{7}3^{-2} $ & true & 205b & $C_2\times S_5$ & $ \II_{(3, 0)}2^{2}_{0}4^{-1}_{3}3^{-2}5^{-1} $ & true \\
94 & $C_2\times Q_8$ & $ \II_{(3, 2)}2^{-4}_{0}16^{1}_{7}3^{1} $ & true & 205c & $C_2\times S_5$ & $ \II_{(3, 0)}2^{2}_{0}4^{-1}_{3}3^{-2}5^{-1} $ & true \\
\rowcolor{lightgray!40!white}95 & $C_2\times D_8$ & $ \II_{(3, 2)}2^{2}_{0}4^{-1}_{3}8^{2}_{0}3^{1} $ & true & 207 & $\textnormal{A}\Gamma\textnormal{L}_1(\mathbb{F}_8)$ & $ \II_{(3, 0)}2^{1}_{1}8^{-2}3^{1} $ & true \\
97a & $D_{12}$ & $ \II_{(3, 2)}2^{4}_{0}4^{-1}_{3}3^{1} $ & false & 208a & $S_3\times S_4$ & $ \II_{(3, 0)}2^{2}_{6}8^{-1}_{3}3^{1} $ & false \\
\rowcolor{lightgray!40!white}97b & $D_{12}$ & $ \II_{(3, 2)}2^{4}_{0}4^{-1}_{3}3^{-3} $ & true & 208b & $S_3\times S_4$ & $ \II_{(3, 0)}2^{2}_{6}8^{-1}_{3}3^{-3} $ & true \\
98a & $D_{12}$ & $ \II_{(3, 2)}2^{-4}_{2}4^{-1}_{3}3^{-2} $ & false & 208c & $S_3\times S_4$ & $ \II_{(3, 0)}2^{2}_{6}8^{-1}_{3}3^{-3} $ & true \\
\rowcolor{lightgray!40!white}98b & $D_{12}$ & $ \II_{(3, 2)}2^{-4}_{2}4^{-1}_{3}3^{4} $ & true & 211 & $S_5$ & $ \II_{(3, 0)}2^{3}_{3}3^{-2} $ & true \\
101a & $C_3^4\rtimes A_6$ & $ \II_{(3, 1)}3^{-1}9^{-1} $ & false & 212 & $C_2\times T_{48}$ & $ \II_{(3, 0)}2^{2}_{0}16^{1}_{7}3^{2} $ & true \\
\rowcolor{lightgray!40!white}101b & $C_3^4\rtimes A_6$ & $ \II_{(3, 1)}3^{3}9^{-1} $ & true & 214a & $C_2\times S_3^2$ & $ \II_{(3, 0)}2^{-2}_{6}4^{-1}_{5}3^{2} $ & false \\
102 & $L_3(4)$ & $ \II_{(3, 1)}2^{-2}3^{2}7^{1} $ & true & 214b & $C_2\times S_3^2$ & $ \II_{(3, 0)}2^{-2}_{6}4^{-1}_{5}3^{2} $ & false \\
\rowcolor{lightgray!40!white}106a & $C_2^4\rtimes A_6$ & $ \II_{(3, 1)}4^{1}_{1}8^{-1}_{5} $ & false & 220a & $F_7$ & $ \II_{(3, 0)}2^{-3}_{1}3^{1}7^{-2} $ & true \\
106b & $C_2^4\rtimes A_6$ & $ \II_{(3, 1)}4^{1}_{1}8^{-1}_{5}3^{-2} $ & true & 220b & $F_7$ & $ \II_{(3, 0)}2^{-3}_{1}3^{1}7^{-2} $ & true \\
\rowcolor{lightgray!40!white}108a & $A_7$ & $ \II_{(3, 1)}5^{1}7^{-1} $ & false &  &  &  &  \\
\hline
E18a&$C_3^2$&$\II_{(3,2)}3^{-6}$&false&E20c&[108, 40]&$\II_{(3,1)}3^39^{-1}$&false\\
\rowcolor{lightgray!40!white}E18b&$C_3^2$&$\II_{(3,2)}3^{-6}$&true&E21a&$C_3\times S_3$&$\II_{(3,0)}2^{-3}_13^{-3}$&true\\
E20a&[108, 40]&$\II_{(3,1)}3^39^{-1}$&false&E21b&$C_3\times S_3$&$\II_{(3,0)}2^{-3}_13^{-3}$&false\\
\rowcolor{lightgray!40!white}E20b&[108, 40]&$\II_{(3,1)}3^39^{-1}$&true& &  &  &  \\
\hline

\label{tab: lovely table 45}
\end{longtable}}

\section{Extension approach --- algorithms}\label{sec: algorithms}In this section, we explain how to simplify the computations of representatives for the double cosets in \Cref{th:classth}. 
Let us observe the following:
\begin{proposition}\label{prop: type hearts}
    Let $C$ be a heart, and let $(F, a)$ be a head of $C$. One of the following two holds:
    \begin{itemize}
        \item either $D_C$ embeds into $D_F$, as abelian groups; 
        \item or $D_F$ embeds into $D_C$, as abelian groups.
    \end{itemize}
\end{proposition}
\begin{proof}
    Let us see $C\subseteq \Lambda$ as the image of a primitive embedding $i\colon C\hookrightarrow \Lambda$. According to \cite[Proposition 1.15.1]{nikulin}, the primitive embedding $i$ determines an isomorphism between a subgroup $I_C \leq D_C$ and a subgroup $I_\Lambda \leq D_\Lambda$. Now, since $D_\Lambda\cong \mathbb{Z}/3\mathbb{Z}$ as abelian group, then either $I_C$ is the trivial group, or $I_C\cong D_\Lambda$. In the former case, \cite[Proposition 1.15.1]{nikulin} tells us that $D_C$ is the glue domain of $i$, and thus $D_C$ is identified with a subgroup of $D_F$. Similar arguments apply in the other case by exchanging the role of $F$ and $C$.
\end{proof}

Given a heart $C$, and given a head $(F, a)$ of $C$, we can easily decide in which case of \Cref{prop: type hearts} the pair $(C, F)$ fits, by comparing the determinant of $C$ and $F$. In particular, we can already conclude the following.

\begin{corollary}\label{small lemma}
    Let $C$ be a heart and let $(F, a)$ be a head of $C$. If $\det(C)$ divides $\det(F)$, then the image of $\{\id_F\}\times O^\#(C)$ along the primitive extension $F\oplus C\subseteq  \Lambda$ is saturated in $O^+(\Lambda)$.
\end{corollary}
\begin{proof}
    According to \Cref{prop: type hearts}, we know that the glue map associated to $F\oplus C\subseteq \Lambda$ identify $D_C$ with a proper subgroup of $D_F$. Thus the result follows directly from \Cref{lem: stably sat,lemma saturation}.
\end{proof}

\begin{remark}
    According to \Cref{small lemma}, if $C$ is a heart and $(F, a)$ is a head of $C$ with $\det(C)\mid \det(F)$, then $a$ must have order 2. In the case where $\det(F)$ divides $\det(C)$, we cannot conclude similarly (see \Cref{possible saturations})
\end{remark}

In what follows, we make \Cref{th:classth} more explicit by separating these two cases from \Cref{prop: type hearts}. 
Indeed, we prove the following lemma.

\begin{lemma}\label{lem:action for spine}
    Let $C$ be a heart and let $(F, a)$ be a head of $C$. Let $b\in O(C)$ and let $\gamma$ be an $(a, b)$-equivariant glue map. Then, $a\oplus b\in O(\Lambda_\gamma)$ is non-stable if and only if
    \begin{itemize}
        \item $\det(C)\mid \det(F)$ and $D_a$ restricts to negative identity on the orthogonal complement of the glue domain of $F\hookrightarrow \Lambda_{\gamma}$;
        \item $\det(F)\mid \det(C)$ and $D_b$ restricts to negative identity on the orthogonal complement of the glue domain of $C\hookrightarrow \Lambda_\gamma$.
    \end{itemize}
\end{lemma}
\begin{proof}
  Let $\gamma$ be an $(a,b)$-equivariant glue map and let $h := a\oplus b\in O(\Lambda_\gamma)$. According to \cite[Proposition 1.15.1]{nikulin}, $D_{\Lambda_\gamma}$ is isometric to $\Gamma^\perp/\Gamma$ where $\Gamma$ is the graph of $\gamma$ in $D_F\oplus D_C$, and the action of $h$ on $D_{\Lambda_\gamma}$ coincides with the one of $a\oplus b$ on $\Gamma^\perp/\Gamma$. Now
  \begin{itemize}
      \item If $\det(C)\mid \det(F)$, we write $D_F = S\oplus T$ where $S\cong D_{\Lambda}$ and $T := S^{\perp}\cong D_C(-1)$ is the glue domain of $F\hookrightarrow \Lambda_\gamma$. In that case, the action of $a\oplus b$ on $\Gamma^\perp/\Gamma$ is given by $(D_a)_{\mid S}$;
      \item If $\det(F)\mid \det(C)$, we write $D_C = S\oplus T$ where $S\cong D_{\Lambda}$ and $T := S^{\perp}\cong D_F(-1)$ is the glue domain of $C\hookrightarrow \Lambda_\gamma$. In that case, the action of $a\oplus b$ on $\Gamma^{\perp}/\Gamma$ is given by $(D_b)_{\mid S}$.\qedhere
  \end{itemize}
\end{proof}

From a computational point of view, \Cref{lem:action for spine} together with \Cref{th:BHP} allows us to decide at the level of hearts, heads and their companions which equivariant gluings will not give rise to spines. This is featured in \Cref{alg:one,alg:two} to compute only the relevant equivariant primitive extensions (for our purpose). 

\begin{algorithm}
\setstretch{0.95}
\caption{Simplied extensions I}\label{alg:one}
\DontPrintSemicolon
\Input{A heart $C$ and a head $(F,a)$ of $C$ such that $\det(C)\mid \det(F)$}
\Output{Representatives of conjugacy classes of pairs $(\Lambda', H)$ where $\Lambda'\cong \Lambda$ and $H\leq O^+(\Lambda')$ is a symplectic finite subgroup such that $H^\#\leq O^{+, \#}(\Lambda')$ is saturated, $\#\overline{H}=2$, the heart of $H$ is isometric to $C$ and its head is isomorphic to $(F, a)$.}
Initialise the empty list $E= []$.\\
    Let $\mathcal{H}_F$ be a set of representatives of isometry classes in $\left\{H_F\leq D_F\mid H_F\cong D_\Lambda\right\}/\overline{O(F, a)}$\label{line:14}\\
    \For{$[S]\in\mathcal{H}_F$\label{line:15}}
      {
        $T \leftarrow S^{\perp}$.\label{line:16}\\
        \If{$D_aT\neq T$ \label{line:17}}
          {
            Discard $[S]$ and continue the for loop with the next representative.\label{line:18}
          }
        \If{$(D_a)_{\mid S}\neq -\textnormal{id}_S$\label{line:19}}
          {
            Discard $[S]$ and continue the for loop with the next representative.\label{line:110}
          }
        Let $\gamma\colon T\to D_C$ be a glue map.\label{line:111}\\
        $S^F_T \leftarrow \Stab_{\overline{O(F, a)}}(T)$.\\
        $S^T_T \leftarrow \im(S^F_T\to O(T))$.\\
        $S_T^{\gamma} \leftarrow \gamma\, S_T^T\,\gamma^{-1}$.\label{line:112}\\
        \For{$[g]\in \overline{O(C)}\diagdown O(D_C)\diagup S_T^{\gamma}$\label{line:113}}
          {
            $\gamma_g \leftarrow g\circ\gamma$.\label{line:114}\\
            $\bar{b} \leftarrow \gamma_gD_a\gamma_g^{-1}$.\label{line:115}\\
            \If{$\bar{b} \notin \overline{O(C)}$\label{line:116}}
              {
                Discard $[g]$ and continue the for loop with the next double coset.\label{line:117}
              }
            Let $b\in O(C)$ such that $D_b = \bar{b}$.\label{line:118}\\
            Let $\Lambda'$ be the overlattice of the glue map $\gamma_g$.\label{line:119}\\
            $h \leftarrow a\oplus b\in O(\Lambda')$.\label{line:120}\\
            $H \leftarrow \langle O^\#(C), \,h \rangle$.\label{line:124}\\
            \If{$\Lambda'_H\cap\markman(\Lambda')\neq\varnothing$ \label{line:125}}
              {
                Discard $[g]$ and continue the for loop with the next double coset.\label{line:126}
              }
            Append $(\Lambda', H)$ to $E$.\label{line:127}
          }
      }
  Return $E$.
\end{algorithm}

\begin{proposition}\label{propo:algo1}
    For any heart $C$ and any head $(F,a)$ of $C$ such that $\det(C)\mid \det(F)$, \Cref{alg:one} returns the correct output.
\end{proposition}

\begin{proof}
  Since $\det(C)\mid \det(F)$, \Cref{prop: type hearts} tells us that for any primitive extensions $F\oplus C\subseteq \Lambda'$ with $\Lambda'\cong \Lambda$, then the glue kernel of $C\hookrightarrow \Lambda'$ is the discriminant group $D_C$ of $C$.
  Let $(\Lambda', H)\in E$ be in the output of the algorithm. Since $C$ and $F_a$ are negative definite, the condition in \Cref{line:125} ensures that $H$ is symplectic (\Cref{lem: crit for bir symp eff}). Since it is generated by $O^\#(C)$ and $h$ where $h$ lies in $O^+(\Lambda')\setminus O^\#(\Lambda')$, we have that $H^\# = O^\#(C)$ is saturated in $O^{+, \#}(\Lambda')$. Note that here, we view $O^\#(C)$ as a saturated subgroup of $O^{+, \#}(\Lambda')$ after extending with the identity on $F$. Moreover \Cref{line:19}, together with \Cref{lem:action for spine}, ensures that $D_h$ acts by negative identity on $D_{\Lambda'}\cong S$. Therefore, together with the conditions in \Cref{line:17,line:116}, we know that $\gamma$ is a spine between $C$ and $(F, a)$ and $b$ is companion to $a$. Moreover, \Cref{th:BHP} tells us that the definition of $H$ does not depend on the choice of $b$ in \Cref{line:118}, and the conjugacy class of $(\Lambda', H)$ is independent on the choice of $(F, a)$ in its isomorphism class. Finally, by definition of $H$, we know that the heart of $H$ is isometric to $C$, and $\overline{H}\leq D_{\Lambda'}$ is nontrivial by definition of $h$ in \Cref{line:120}.
  
  Now, suppose that $H\leq O^+(\Lambda)$ is a symplectic finite subgroup, with $H^\#$ saturated in $O^{+, \#}(\Lambda)$, with $\overline{H}\leq D_\Lambda$ nontrivial, with heart isometric to $C$ and with head isomorphic to $(F, a)$. Let $h\in H$ be such that $H/O^\#(C)$ is generated by $hO^\#(C)$, and let $b$ be the restriction of $h$ to $\Lambda_{H^\#}\cong C$. Then, by \Cref{th:BHP,th:classth}, up to the choice of a representative in the class of the glue domain for $F\hookrightarrow \Lambda$ in \Cref{line:15}, the choice of a representative in the double coset of the associated spines in \Cref{line:113} and the choice of a suitable companion $b'$ of $a$ in \Cref{line:118} with $D_{b'} = D_b$, we have that there exists $(\Lambda', H')\in E$ which is conjugate to $(\Lambda, H)$. Note that the double cosets in \Cref{line:113} and in \Cref{th:classth} are in bijection.
\end{proof}

\begin{algorithm}
\setstretch{0.95}
\caption{Simplied extensions II}\label{alg:two}
\DontPrintSemicolon
\Input{A heart $C$ and a head $(F,a)$ of $C$ such that $\det(F)\mid \det(C)$}
\Output{Representatives of conjugacy classes of pairs $(\Lambda', H)$ where $\Lambda'\cong \Lambda$ and $H\leq O^+(\Lambda')$ is a symplectic finite subgroup such that $H^\#\leq O^{+, \#}(\Lambda')$ is saturated, $\#\overline{H}=2$, the heart of $H$ is isometric to $C$ and its head is isomorphic to $(F, a)$.}
Initialise the empty list $E= []$.\\
  Let $\mathcal{H}_C$ be a set of representatives of isometry classes in $\left\{H_C\leq D_C\mid H_C\cong D_\Lambda\right\}/\overline{O(C)}$.\\
    \For{$[S]\in\mathcal{H}_C$}
      {
        $T \leftarrow S^{\perp}$.\\
        $S^C_T \leftarrow \textnormal{Stab}_{\overline{O(C)}}(T)$.\label{line:27}\\
        $S^T_T \leftarrow \textnormal{im}(S^C_T\to O(T))$.\label{line:28}\\
        Let $\gamma\colon D_F\to T$ be a glue map.\\
         $S_F^{\gamma} \leftarrow \gamma \overline{O(F,a)}\gamma^{-1}$.\label{line:210}\\
    \For{$[g]\in S^T_T\diagdown O(T)\diagup S_F^{\gamma}$\label{line:211}}
      {
        $\gamma_g \leftarrow g\circ\gamma$.\label{line:212}\\
        $\tilde{b} \leftarrow \gamma_gD_a\gamma_g^{-1}$.\label{line:213}\\
        \If{$\tilde{b} \notin S^T_T$\label{line:214}}
          {
            Discard $[g]$ and continue the for loop with the next double coset.\label{line:215}
          }
        $\widehat{b}\leftarrow \tilde{b}\oplus (-\textnormal{id}_S)\in O(D_T)$.\label{line:216}\\
        \If{$\widehat{b}\notin S^C_T$\label{line:if}}
          {
            Discard $[g]$ and continue the for loop with the next double coset.\label{line:dis}
          }
            Let $b\in O(C)$ such that $D_b = \widehat{b}$.\label{line:218}\\
        Let $\Lambda'$ be the overlattice of the glue map $\gamma_g$.\label{line:219}\\
        $h \leftarrow a\oplus b\in O(\Lambda')$.\label{line:220}\\
        $H \leftarrow \langle O^\#(C),\, h \rangle$.\label{line:224}\\
        \If{$a \neq \textnormal{id}_F$ \textnormal{\textbf{and}} $\Lambda'_H\cap\markman(\Lambda')\neq\varnothing$\label{line:225}}
          {
            Discard $[g]$ and continue the for loop with the next double coset.\label{line:226}
          }
        Append $(\Lambda', H)$ to $E$.\label{line:227}
      }
  }
  Return $E$.
\end{algorithm}

\begin{proposition}\label{propo:algo2}
    For any heart $C$ and any head $(F,a)$ of $C$ such that $\det(F)\mid \det(C)$, \Cref{alg:two} returns the correct output.
\end{proposition}

\begin{proof}
  The proof is similar to the proof of \Cref{propo:algo1}. Note that the main difference is that we do not start with a fixed isometry of $C$, so the translation of the double cosets from \Cref{th:classth} to this context has to be adapted accordingly. 
  
  Let us note that if $a = \textnormal{id}_F$, it follows that $H$ is the saturation of $O^\#(C)$ in $O^+(\Lambda')$ as described in \Cref{possible saturations}. By definition of the saturation, $\Lambda'_H = C$ holds and this implies that $\Lambda'_H\cap\markman(\Lambda')$ is necessarily empty.
\end{proof}

\begin{remark}
    The isometry $b$ at Lines \ref{line:118} and \ref{line:218} of \Cref{alg:one,alg:two} respectively is computable in our setting since for us the respective lattices $C$ are definite: one can therefore effectively compute the discriminant representation $O(C)\to O(D_C)$ and for each element in $\overline{O(C)}$, one can compute a preimage. In the case where $F$ is indefinite, then we do not compute $\overline{O(F, a)}$ through $O(F, a)$ which might be in general infinite. However, the enumeration process described by Brandhorst and Hofmann features the computation of $\overline{O(F, a)}$ by induction on gluing stabilisers along equivariant primitive extensions \cite[Algorithm 2]{bh22}.
\end{remark}

\begin{remark}
    Each entry of \Cref{tab: lovely table 45} determines a pair $(C, F)$ where $C$ is a heart and $F$ is its orthogonal complement in $\Lambda$. In the cases where $\det(C)\mid \det(F)$ and $C$ has rank 21, then $F$ is positive definite and it admits no nontrivial isometries with negative definite coinvariant sublattice. All the other cases are uniquely determined by $C$ and $F$, up to isometry, except for the pair of cases 47a and 47b. In those cases, $F\in \II_{(3,3)}2^{-2}3^29^1$ and we have that $\det(F)\mid \det(C)$. However, one can actually show that in this situation the set $\mathcal{H}_C$, as defined in \Cref{alg:two}, has actually cardinality 2 and which is why we obtain these two non-isomorphic primitive sublattices of $\Lambda$.
    The upshot is the following. In our particular setting, for each pair $(\Lambda', H)$ in output of \Cref{alg:one,alg:two}, it is effectively possible to determine to which entry of \Cref{tab: lovely table 45} the stable sublattice $\Lambda'_{H^\#}\subseteq \Lambda'\cong\Lambda$ is isomorphic.
\end{remark}

\end{document}